\newcommand{\flplus} 
{
\mathbin{
\begin{tikzpicture}[baseline=-0.582ex]
    \draw [line width=0.24pt](-0.1129, 0) -- (0.1129, 0) -- (0, 0) -- (0, 0.1129) -- (0, -0.1129) arc (270:90:0.1129) -- (0, 0);
\end{tikzpicture}
}
}
\newcommand{\frplus} 
{
\hspace{0.1cm}
\begin{tikzpicture}[baseline=-0.582ex]
    \draw [line width=0.24pt](-0.1129, 0) -- (0.1129, 0) -- (0, 0) -- (0, 0.1129) -- (0, -0.1129) arc (-90:90:0.1129) -- (0, 0);
\end{tikzpicture}
\hspace{0.1cm}
}
\DeclareSymbolFont{script}{U}{eus}{m}{n}
\DeclareMathSymbol{\Wedge}{0}{script}{"5E}
\def\Cal{\mathcal}
\newcommand{\Rho}{\mathsf{P}}
\DeclareMathOperator{\G}   {G}
\DeclareMathOperator{\GL}  {GL}
\DeclareMathOperator{\Hom} {Hom}
\DeclareMathOperator{\id} {Id}
\DeclareMathOperator{\rank}{rank}
\DeclareMathOperator{\SL}  {SL}
\DeclareMathOperator{\SO}  {SO}
\DeclareMathOperator{\Sp}  {Sp}
\DeclareMathOperator{\SU}  {SU}
\DeclareMathOperator{\U}   {U}
\newcommand{\wt}[1]{\widetilde{#1}}
\newcommand{\bbC}{\mathbb{C}}
\newcommand{\bbH}{\mathbb{H}}
\newcommand{\bbI}{\mathbb{I}}
\newcommand{\bbJ}{\mathbb{J}}
\newcommand{\bbK}{\mathbb{K}}
\newcommand{\bbR}{\mathbb{R}}
\newcommand{\bbV}{\mathbb{V}}
\newcommand{\bbZ}{\mathbb{Z}}
\newcommand{\wh} [1]{\smash{\widehat {#1}}}
\newcommand{\mbc}{\mathbf{c}}
\newcommand{\mbg}{\mathbf{g}}
\newcommand{\mbp}{\mathbf{p}}
\newcommand{\mbQ}{\mathbf{Q}}
\newcommand{\mcA}{\mathcal{A}}
\newcommand{\mcE}{\mathcal{E}}
\newcommand{\mcL}{\mathcal{L}}
\newcommand{\mcS}{\mathcal{S}}
\newcommand{\mcT}{\mathcal{T}}
\newcommand{\mfg}  {\mathfrak{g}}
\newcommand{\wtM}{\wt M}
\newcommand{\wtP}{\wt P}
\newcommand{\wtmcT}{\wt\mcT}
\def\R{\mathbb{R}}
\newcommand{\ce}{{\Cal E}}
\newcommand{\cT}{{\mathcal T}}
\newcommand{\rpl}                         
{\mbox{$
\begin{picture}(12.7,8)(-.5,-1)
\put(0,0.2){$+$}
\put(4.2,2.8){\oval(8,8)[r]}
\end{picture}$}}
\def\R{\mathbb{R}}
\newcommand{\si}{\sigma}
\newcommand{\Aut}{\operatorname{Aut}}
\newcommand{\End}{\operatorname{End}}
\newtheorem{theorem}{Theorem}[section]
\newtheorem{lemma}[theorem]{Lemma}
\newtheorem{proposition}[theorem]{Proposition}
\theoremstyle{definition}
\newtheorem{definition}[theorem]{Definition}
\newtheorem{notation}[theorem]{Notation}
\theoremstyle{remark}
\newtheorem{remark}[theorem]{\rm\bf Remark}
\newtheorem*{definition*}{\rm\bf Definition}
\newcommand{\Ric}{\operatorname{Ric}}
\newcommand{\vol}{\bm \epsilon}
\def\sideremark#1{\ifvmode\leavevmode\fi\vadjust{\vbox to0pt{\vss
 \hbox to 0pt{\hskip\hsize\hskip1em
 \vbox{\hsize3cm\tiny\raggedright\pretolerance10000
  \noindent #1\hfill}\hss}\vbox to8pt{\vfil}\vss}}}%
\author{A.\ R. Gover, K. Neusser, \& T.\ Willse}
\title{Projective geometry of $3$-Sasaki structures}
\begin{document}

\address{
A.R.G.: Department of Mathematics\\
  The University of Auckland\\
  Private Bag 92019\\
  Auckland 1142\\
  New Zealand\\
K.N.: Department of Mathematics and Statistics\\
  Masaryk University\\
  Kotl\'a\v rsk\'a 267/2 \\
  611 37 Brno\\
  Czech Republic\\
T.W.: Mathematics Department\\
  Guilford College\\
  5800 W Friendly Ave\\
  Greensboro NC 27410\\
  United States of America
}
\email{r.gover@auckland.ac.nz}
\email{neusser@math.muni.cz}
\email{twillse@guilford.edu}

\subjclass[2010]{}
\keywords{}
\begin{abstract}
We show that $3$-Sasaki structures admit a natural description in
terms of projective differential geometry. This description provides a
concrete link between $3$-Sasaki structures and several other
geometries and constructions via a single unifying picture.  First we
establish that a $3$-Sasaki structure may be understood as a
projective structure equipped with a certain holonomy reduction to the
(possibly indefinite) unitary quaternionic group $\Sp(p,q)$, namely a parallel
hyperk\"ahler structure on the projective tractor bundle satisfying a
particular genericity condition. For the converse, where one begins
with a general parallel hyperk\"ahler structure on the projective
tractor bundle, the genericity condition is not automatic. Indeed we
prove that generically such a reduction decomposes the underlying
manifold into a disjoint union of strata including open manifolds
with (indefinite) $3$-Sasaki structures and a closed separating
hypersurface at infinity with respect to the
$3$-Sasaki metrics. Moreover, it is shown that the latter hypersurface
inherits a Biquard--Fefferman conformal structure, which thus
(locally) fibres over a quaternionic contact structure, and which in turn
compactifies the natural quaternionic K\"ahler quotients of the
$3$-Sasaki structures on the open manifolds.  As an application we
describe the projective compactification of (suitably) complete,
non-compact (indefinite) $3$-Sasaki manifolds and recover Biquard's
notion of asymptotically hyperbolic quaternionic K\"ahler metrics.
\end{abstract}

\maketitle

\section{Introduction}
A (pseudo-)Riemannian manifold $(M, g)$ is said to be Sasaki, or
Sasakian if its standard metric cone is
(pseudo-)K\"{a}hler \cite{BG-book}, and said to be Sasaki--Einstein if $g$ is also
Einstein. Inspired by the pioneering work of
\cite{ArmstrongThesis,ArmstrongP1,ArmstrongP2}, we showed in \cite{GNW} that
projective geometry provides a natural unifying framework for treating
many aspects of Sasaki and Sasaki--Einstein geometry. Recall that a
projective structure on a manifold is an equivalence class $\mbp$
of torsion-free affine connections that share the same unparametrised
geodesics. Note that a (pseudo-)Riemannian manifold---and in particular a Sasaki manifold---thus induces a
projective structure by its Levi-Civita connection. As shown in \cite{GNW}, a gain of this perspective
is that it leads to natural geometric compactifications of some complete
non-compact (indefinite) Sasaki--Einstein structures, using tools from \cite{CGcompact,CGH}.

Canonically associated to any projective manifold $(M,\mbp)$ of
dimension $n+1\geq2$ is a projectively invariant connection
$\nabla^\cT$ on a natural vector bundle $\cT$ of rank $n+2$, the
so-called tractor bundle with connection \cite{BEG}.  A result, with
close links to the metric cone characterisation of Sasaki-structures,
is that a Sasaki--Einstein $(2m+1)$-manifold of positive definite
signature is exactly the same as a projective manifold for which the
restricted holonomy of $\nabla^\cT$ is contained in $\SU(m+1)$. In the
case of holonomy reductions of $\nabla^\mcT$ to the indefinite special
unitary group $\SU(p,q)$ the situation is more subtle and interesting,
and these subtleties, not visible in the metric cone picture, are what
is studied and exposed in \cite{GNW}. Specifically, we showed in
\cite[Theorem B]{GNW} that a projective holonomy reduction to
$\SU(p,q)$ induces in general a stratification
\[
    M = M_+ \cup M_0 \cup M_-
\]
of the underlying projective manifold $(M, \mbp)$ into a disjoint union of submanifolds. Here,
$M_\pm$ are open submanifolds equipped with indefinite Sasaki--Einstein
structures, and $M_0$ is a smooth separating hypersurface equipped
with an oriented conformal structure of signature $(2p - 1, 2q - 1)$
whose {\em conformal holonomy} is contained in $\SU(p,q)$, making it
(locally) what is called a Fefferman space
(cf.\ \cite{CGFeffchar,Leit}). In fact $M_0$ is the compactifying
projective infinity of the Sasaki--Einstein spaces $M_\pm$ \cite[Theorem C]{GNW}. To
understand this picture fully, we explored in \cite{GNW} the group identity
\begin{equation}\label{keydis}
\U(p, q) = \SO(2p, 2q) \cap \Sp(2m + 2, \bbR)\cap \GL(m + 1, \bbC),
\end{equation}
and the fact that $\U(p, q)$ is the (compatible) intersection of any
two of the groups on the right-hand side, at the level of projective
holonomy reductions.  We there treat the holonomy reductions for each
group on the right-hand side separately, and then the implications of
pairs of them holding simultaneously in a compatible way.

In this article we develop the analogous theory for {\em $3$-Sasaki
  structures}, meaning those (pseudo-)Riemannian manifolds $(M,g)$
whose standard metric cone is hyperk\"ahler, which implies in
particular that $\dim M \equiv 3 \pmod 4$.  In recent decades
$3$-Sasaki geometries have been the subject of considerable focus for
a host of different reasons \cite{AF,BG-3Sas,BG-book,BGM,GS,GWZ}. From
our current point of view they are extremely interesting because, via
the tools of projective geometry and the curved orbit theory of
\cite{CGH}, we are able to link indefinite $3$-Sasaki structures to
Biquard--Fefferman conformal structures, which are conformal
structures that fibre (locally) over quaternionic contact structures via a
construction described by Biquard in \cite{BiquardMetriques}; for a
holonomy characterisation of these conformal structures see
\cite{AltQuaternionic}. Moreover, quaternionic contact structures were
introduced by Biquard in \cite{BiquardMetriques, BiquardQuaternionic}
as geometric structures arising at the infinity of quaternionic K\"ahler
manifolds, and our study recovers that fundamental
relationship. Indeed all the geometries just mentioned are shown to be
parts of a single smooth geometric structure and its leaf spaces. In
more detail, the article proceeds as follows.

Following a recollection of both the algebraic preliminaries and the
notions of quaternionic and hyperk\"{a}hler structures in Section
\ref{alg}, in Definition \ref{Def_3Sas} we recall the usual definition
of a $3$-Sasaki structure in terms of a triple of Killing vector fields. Proposition \ref{coneprop} then
relates this description to the aforementioned definition via the metric
cone. In Section \ref{subsection:affine-projective-geometry} we review
the necessary background on affine and projective differential
geometry, including the standard tractor bundle $(\mcT, \nabla^\mcT)$ of a projective manifold.

The main results are presented in Section \ref{mainsect}. In Theorem
\ref{thmA} we show that the projective tractor bundle $(\mcT,\nabla^\mcT)$ of
a $3$-Sasaki manifold of dimension $4m+3$ and of arbitrary signature $(4p-1,4q)$ admits
a parallel tractor hyperk\"ahler structure of signature $(4p,4q)$,
equivalently a holonomy reduction to
\begin{equation}\label{group_intersection}
    \Sp(p, q) = \Sp(2 m + 2, \bbC) \cap \U(2 p, 2 q) = \SO(4 p, 4 q) \cap \GL(m + 1, \bbH) ,
\end{equation}
where $m := p + q - 1$. Theorem \ref{thmB} then considers the
converse, namely a projective manifold $(M,\mbp)$ of dimension
$4m+3\geq 11$ equipped with a parallel hyperk\"ahler structure of
signature $(4p,4q)$ on its tractor bundle. We show that, in general, this determines a stratification  of $M$:
$$
M = M_+ \cup M_0 \cup M_-,
$$
where $M_\pm$
are open submanifolds equipped with indefinite $3$-Sasaki structures
of signature $(4p-1,4q)$ and $(4p,4q-1)$ respectively, and $M_0$ is a
smooth separating hypersurface equipped with a Biquard--Fefferman
conformal structure of signature $(4p-1, 4q-1)$, as mentioned above and
whose definition is recalled in Section \ref{sec_q_c}.
Theorem \ref{thmC}
then observes that $M_0$ with its conformal structure is the boundary (``at infinity,'' with respect to the $3$-Sasaki metric) of
a projective compactification of the $3$-Sasaki structures on
$M_\pm$; see \cite{CGcompact,CGHjlms} for the notion of projective
compactification.  In Section \ref{Sec_Hol_SL} we study oriented projective manifolds $(M,\mbp)$ of dimension $4m+3\geq 11$ with a parallel hypercomplex structure on its tractor bundle, that is, with a projective holonomy reduction
to
\[
    \SL(m + 1, \bbH) = \GL(m + 1, \bbH) \cap \SL(4 m + 4, \bbR).
\]
We show that such a reduction determines an integrable distribution $D$ of rank $3$ on $M$ and that (locally) the
corresponding leaf space $\widetilde M$ inherits a quaternionic structure $\widetilde Q$; see Theorem \ref{Thm_quaternionic_descent}. Moreover, in Theorem \ref{thm:tractor_descent} we prove that in this setting
the projective tractor bundle of $(M,\mbp)$ descends, via the natural projection $\pi: M\rightarrow \widetilde M$, to the canonical (normal) quaternionic tractor bundle of $(\widetilde M, \widetilde Q)$---a useful result of independent interest.
Finally, that fact  is used to show in Theorem \ref{thmD} that, if the projective holonomy reduces from $\SL(m + 1, \bbH) $ further to $\Sp(p,q)$, then
$\widetilde M$ admits in general a decomposition into submanifolds
\[
    \wt{M} = \wt {M}_+ \cup \wt{M}_0 \cup \wt{M}_-,
\]
where $\wt{M}_{\pm}$ are open submanifolds with quaternionic K\"ahler structures of signatures $(4(p-1), 4q)$ and  $(4(q-1), 4p)$ respectively,
and $\wt{M}_0$ is a real separating hypersurface with a quaternionic contact structure of signature $(p-1, q-1)$ (see Definition \ref{quat_contact}). The quaternionic K\"ahler structures on $\wt{M}_{\pm}$
coincide here with the canonical quaternionic K\"ahler  quotients of the $3$-Sasaki structures on
$M_\pm$ (induced  by the defining triple of Killing fields), and the quaternionic contact structure on $\wt{M}_0$ coincides with the one induced from the Biquard--Fefferman conformal structure on  $M_0$.

\smallskip

As a final point here we remind the reader that the metric cone over a
Sasaki--Einstein manifold carries a Ricci-flat K\"ahler structure. In
dimension $4$, however, a manifold carries such a structure if and
only if it can be extended to a hyperk\"ahler structure. Thus, a
$3$-dimensional Sasaki--Einstein structure can be extended to a
$3$-Sasaki structure. In that sense, the case of $3$-dimensional
$3$-Sasaki structures was essentially treated in \cite[\S1]{GNW}. In
any case, as pointed out there, the metric underlying any Sasaki
structure (and hence any $3$-Sasaki structure) in that dimension is
projectively flat. We will focus in this article on $3$-Sasaki
structures of dimension at least $11$ and will discuss the
$7$-dimensional case, which is also special, only in remarks.

All objects herein are smooth, meaning $C^\infty$.

\medskip

\noindent \textbf{Acknowledgements:}
A.R.G.\ acknowledges support by the Royal Society of New Zealand via Marsden Grants 16-UOA-051 and 19-UOA-008. K.N.\ was
supported by the grant GA19-06357S from Czech Science Foundation (GA\v CR). T.W. acknowledges support and hospitality from Centro de Investigaci\'on en Matem\'aticas (Mexico), and support from Guilford College.

\section{Background}
We start by recalling the notions of various geometric structures and some basic results about them that will be central for this article.

\subsection{Quaternionic algebraic and geometric structures} \label{alg}
For a uniform treatment of the quaternionically ``flavoured'' structures in this section, see \cite{AlekseevskyMarchiafava}.

\subsubsection{Quaternionic and hypercomplex vector spaces}

\begin{definition} Suppose $\bbV$ is a real (finite-dimensional) vector space.
\begin{itemize}
\item[(a)] A \textit{hypercomplex structure} on  $\bbV$ is a triple $(\bbI, \bbJ, \bbK)$ of complex structures
$\bbI,\bbJ,\bbK \in \End \bbV$ on $\bbV$ satisfying
\[
    \bbI \bbJ \bbK = -\smash{\id_\bbV} .
\]
\item[(b)] A \textit{quaternionic structure} on $\bbV$ is a $3$-dimensional subspace
\[
    \mbQ := \operatorname{span} \{\bbI, \bbJ, \bbK\} \subseteq \End \bbV,
\]
where $(\bbI, \bbJ, \bbK)$ is a hypercomplex structure
\end{itemize}
\end{definition}
Note that, if $\mbQ\subseteq \End \bbV$ is a quaternionic structure on
a vector space $\bbV$ and $(\bbI, \bbJ, \bbK)$ a hypercomplex
structure inducing it, then
declaring $(\bbI, \bbJ, \bbK)$ to be an oriented, orthonormal basis
determines an orientation and inner product on $\mbQ$.  Moreover, the
hypercomplex structures that induce $\mbQ$ are exactly the oriented
bases of $\mbQ$ that are orthonormal with respect to that inner product.

If $\bbV$ admits a hypercomplex or quaternionic structure, then $\dim_\bbR \!\bbV = 4m + 4$ for some $m \in \{0, 1, \ldots\}$.
As for complex structures, a hypercomplex or quaternionic structure determines an orientation on $\bbV$: Given a quaternionic basis of $\bbV$, that is, an ordered $(m + 1)$-tuple $(E_1, \ldots, E_{m+1})$ of vectors in $\bbV$ such that
\[
    (E_1, \bbI E_1, \bbJ E_1, \bbK E_1, \ldots, E_{m + 1}, \bbI E_{m + 1}, \bbJ E_{m + 1}, \bbK E_{m+1})
\]
is a basis of $\bbV$, declaring
\begin{equation}\label{orientation_q}
    E_1 \wedge \bbI E_1 \wedge \bbJ E_1 \wedge \bbK E_1 \wedge \cdots \wedge E_{m + 1} \wedge \bbI E_{m + 1} \wedge \bbJ E_{m + 1} \wedge \bbK E_{m + 1}
        \in \Wedge^{4 m + 4} \bbV
\end{equation}
to be positively oriented defines an orientation on $\bbV$. This
orientation is independent of the choice of vectors $E_1, \ldots,
E_{m+1}$ and the choice of representative triple $(\bbI,\bbJ,\bbK)$ in
the quaternionic case, since any two such representatives are related
by an action of $\SO(3)$.

If $\bbV$ is equipped with a hypercomplex structure $(\bbI, \bbJ,
\bbK)$ then, regarding it as a (right) $\bbH$-module---where $\bbH$ is
the quaternion algebra generated by $(\bbI, \bbJ, \bbK$)---its
automorphism group is
\[
    \Aut(\bbV, (\bbI, \bbJ, \bbK)) \cong \GL(m + 1, \bbH) ,
\]
where $\dim_\bbR \!\bbV = 4 m + 4$. The automorphism group of the
induced quaternionic structure $\mbQ=\operatorname{span} \{\bbI, \bbJ,
\bbK\}$ on $\bbV$ is
\[
    \Aut(\bbV, \mbQ) \cong \GL(m + 1, \bbH)  \times_{\bbZ_2} \Sp(1) := (\GL(m + 1, \bbH) \times \Sp(1)) / \bbZ_2.
\]
The factor $\Sp(1)=\{q\in \bbH: |q|=1\}$ acts on $\bbV$ by
quaternionic scalar multiplication on the right, and $\bbZ_2$ acts by
multiplication by $\pm\smash{\id}$ on both factors.

The maximal subgroups of these two groups characterised by fixing a volume form
$\vol \in \Wedge^{4 m + 4} \bbV^*$ compatible with the canonical
orientation on $\bbV$ are
\begin{align*}
    \Aut(\bbV, (\bbI, \bbJ, \bbK), \vol)
        &\cong \SL(m + 1, \bbH) := \GL(m + 1, \bbH) \cap \SL(4 m + 4, \bbR) , \\
    \Aut(\bbV, \mbQ, \vol)
        &\cong \SL(m + 1, \bbH)  \times_{\bbZ_2} \Sp(1).
\end{align*}
\begin{definition}
Let $\bbV$ be a real vector space equipped with an inner product $h$, that is, a symmetric non-degenerate bilinear form (here, and henceforth, not necessarily positive definite).
\begin{itemize}
\item[(a)] The inner product
$h$ is called \textit{Hermitian}, with respect to a hypercomplex structure $(\bbI, \bbJ, \bbK)$ on $\bbV$, if
\begin{equation}\label{Hermitian}
    h(X, Y)
        = h(\bbI X, \bbI Y)
        = h(\bbJ X, \bbJ Y)
        = h(\bbK X, \bbK Y)
    \qquad
    \textrm{for all}
    \qquad
    X, Y \in \bbV,
\end{equation} or, with respect to a quaternionic structure $\mbQ$ on $\bbV$, if \eqref{Hermitian} holds for any, equivalently, every, hypercomplex structure inducing $\mbQ$.
\item[(b)] If an inner product $h$ is Hermitian with respect to a hypercomplex structure $(\bbI, \bbJ, \bbK)$ or a quaternionic structure $\mbQ$, we call the respective pairs $(h, (\bbI, \bbJ, \bbK))$ and
$(h, \mbQ)$ a \textit{Hermitian hypercomplex structure} and a \textit{Hermitian quaternionic structure} on $\bbV$.
\end{itemize}
\end{definition}

If an inner product $h$ on a real vector space $\bbV$ is Hermitian with respect to a hypercomplex or quaternionic structure thereon, then its signature is $(4p, 4q)$ for some $p, q \in \{0, 1, \ldots\}$. Since the hypercomplex or quaternionic structure determines a natural orientation on $\bbV$, a Hermitian inner product $h$ determines a preferred volume form on $\bbV$.
Regarded as a (right) $\bbH$-module, a vector space $\bbV$ equipped with a Hermitian hypercomplex structure $(h, (\bbI, \bbJ, \bbK))$ has automorphism group
\[
    \Aut(\bbV, h, (\bbI, \bbJ, \bbK)) \cong \Sp(p, q) ,
\]
in the case where the signature of $h$ is $(4p, 4q)$, and the automorphism group of the induced Hermitian quaternionic structure $(h, \mbQ)$ on $\bbV$ is
\[
\Aut(\bbV, h, \mbQ) \cong \Sp(p, q) \times_{\bbZ_2} \Sp(1).
\]
Figure 1 adapts from \cite[Section 1]{AlekseevskyMarchiafava} a diagram of the relationships of the above quaternionically flavoured structures on a real vector space,
giving for each the name, the defining data and its automorphism group. Arrows denote inclusions of automorphism groups. i.e. forgetting some of the data defining the structure.
\noindent
\begin{figure}[h]
       \adjustbox{width=\textwidth}{
    \begin{tikzcd}[column sep=large, row sep=large]
        \begin{tabular}{c}Hermitian hypercomplex\\$(h, (\bbI, \bbJ, \bbK))$\\$\Sp(p, q)$\end{tabular} \arrow[r, hook] \arrow[d, hook]
        &
        \begin{tabular}{c}unimodular hypercomplex\\$((\bbI, \bbJ, \bbK), \vol)$\\$\SL(m + 1, \bbH)$\end{tabular} \arrow[r, hook] \arrow[d, hook]
        &
        \begin{tabular}{c}hypercomplex\\$(\bbI, \bbJ, \bbK)$\\$\GL(m + 1, \bbH)$\end{tabular} \arrow[d, hook]
        \\
        \begin{tabular}{c}Hermitian quaternionic\\$(h, \mbQ)$\\$\Sp(p, q) \times_{\bbZ_2} \Sp(1)$\end{tabular} \arrow[r, hook]
        &
        \begin{tabular}{c}unimodular quaternionic\\$(\mbQ, \vol)$\\$\SL(m + 1, \bbH) \times_{\bbZ_2} \Sp(1)$\end{tabular} \arrow[r, hook]
        &
        \begin{tabular}{c}quaternionic\\$\mbQ$\\$\GL(m + 1, \bbH) \times_{\bbZ_2} \Sp(1)$\end{tabular}
        \\
    \end{tikzcd}
    }
     \caption{Relationships among quaternionically flavoured structures.}
\end{figure}
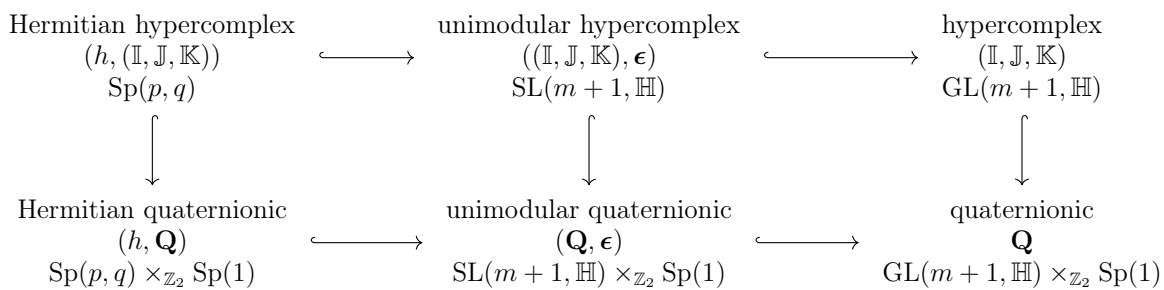

\subsubsection{Quaternionic and quaternionic K\"ahler structures}

We now transfer the algebraic notions from Section \ref{alg} to the vector bundle setting.

\begin{definition}\label{almost_quat_str}
~
\begin{itemize}
\item[(a)]
  A \emph{quaternionic structure} on a vector bundle $\mathcal{V}$ over a manifold $M$ is a rank-$3$ subbundle $Q_{\mathcal{V}} \subset \End(\mathcal{V})$ such that, locally around any point in $M$, there exists a local frame
$(I, J, K)$ of $Q_{\mathcal{V}}$ satisfying the quaternionic relations:
\[
    I^2=J^2=K^2=-\smash{\id_{\mathcal V}}
    \qquad \textrm{and} \qquad
    IJK=-\smash{\id_{\mathcal V}} ,
\]
where $\id_{\mathcal V}$ is the identity endomorphism. Such local frames $(I,J,K)$ of a quaternionic structure are called \emph{admissible}. It follows that $\rank \mathcal{V} \equiv 0 \pmod 4$.
\item[(b)] An \emph{almost quaternionic structure} on a manifold $M$ of dimension $4m+4$ is a quaternionic structure $Q$ on $TM$, equivalently, a first-order $\G$-structure with structure group $\GL(m + 1,\bbH) \times_{\bbZ_2} \Sp(1)$.
    \end{itemize}
\end{definition}

Note that an almost quaternionic manifold $(M,Q)$ admits a canonical orientation, namely that determined by any admissible local frame $(I,J,K)$ of $Q$ as in \eqref{orientation_q}.
Moreover, it is well-known, see Section 3.2 of \cite{AlekseevskyMarchiafava}, that an almost quaternionic manifold $(M,Q)$ admits a distinguished class of linear connections on
$TM$ that preserve $Q$ and have the same minimal torsion, which may be identified with the intrinsic torsion of the $\GL(m + 1,\bbH) \times_{\bbZ_2} \Sp(1)$-structure.
This class of connections forms an affine space over the vector space of $1$-forms on $M$. Specifically, any two such connections, $\nabla$ and $\widehat{\nabla}$,
are related by a $1$-form $\Upsilon\in\Gamma(T^*M)$ according to the formula
\begin{equation}\label{quater_proj_change}
	\widehat{\nabla}_{\eta} \xi
		= \nabla_{\eta} \xi
			+ \Upsilon(\eta)\xi + \Upsilon(\xi)\eta
			- \sum_{A \in \{I, J, K\}} [(\Upsilon(A\eta) A\xi + \Upsilon(A\xi) A\eta)] ,
\end{equation}
where $(I, J, K)$ is an admissible local frame of $Q$.

\begin{definition}\label{quaternionic_st}
An almost quaternionic manifold $(M, Q)$ of dimension $4m+4 \geq 8$ is called
\emph{quaternionic} if it admits a torsion-free linear connection on
$TM$ preserving $Q$, equivalently, if the intrinsic torsion of the corresponding $\GL(m + 1,\bbH) \times_{\bbZ_2} \Sp(1)$-structure vanishes.
\end{definition}

\begin{definition}\label{Def_QK}
  A \textit{quaternionic K\"ahler structure} on a manifold $M$ of dimension $4m+4 \geq 8$
consists of a (pseudo-)Riemannian metric $g \in \Gamma(S^2 T^*M)$ and an almost
quaternionic structure $Q \subset \End(TM)$ such that
\begin{itemize}
\item for each $x \in M$, $(g_x, Q_x)$ is a Hermitian quaternionic structure on $T_x M$;
\item $Q$ is parallel with respect to the Levi-Civita connection of $g$.
\end{itemize}
In particular, $Q$ is quaternionic.
\end{definition}

A connected (pseudo-)Riemannian manifold $(M, g)$ of signature $(4p, 4q)$ admits a quaternionic K\"ahler structure if and only if the holonomy of $g$ is contained in $\Sp(p, q) \times_{\bbZ_2} \Sp(1)$. Quaternionic K\"ahler manifolds are known to be Einstein; see e.g.\,Corollary 12.2.14 in \cite{BG-book}.

\begin{remark}\label{QK_dim4}
Note that in Definitions \ref{quaternionic_st} and \ref{Def_QK} we required the dimension to be at least $8$. The reason is that dimension $4$ is special and requires a separate treatment.
Let us briefly explain this: Almost quaternionic structures in dimension $4$ are known to be automatically quaternionic in the sense of Definition \ref{quaternionic_st}. Moreover, in dimension $4$,
due to the isomorphism $\Sp(1) \times_{\bbZ_2} \Sp(1)\cong\textrm{SO}(4)$ (and thus also $\GL(1, \mathbb H) \times_{\bbZ_2} \Sp(1)\cong\textrm{CSO}(4)$), almost quaternionic structures are the same as oriented conformal structures,
and Riemannian manifolds with holonomy
in $\Sp(1) \times_{\bbZ_2} \Sp(1)$ are simply oriented Riemannian manifolds. In view of that, the definition of quaternionic K\"ahler $4$-manifolds needs to be adjusted: they are defined as oriented
Riemannian $4$-manifolds that are self-dual and Einstein. Similarly, the definition of quaternionic structures in dimension $4$ is sometimes adjusted to mean self-dual oriented conformal structures;
see e.g.\,Definition 12.2.12 in \cite{BG-book}.
\end{remark}

\subsubsection{Hypercomplex and hyperk\"ahler structures}  \label{hyp-str-sect}

\begin{definition}
An \emph{(almost) hypercomplex structure} on a manifold $M$ consists of
a triple $(I, J, K)$ of (almost) complex structures satisfying the
quaternionic relation ${IJK = -\smash{\id_{TM}}}$.
\end{definition}

As for almost quaternionic structures, almost hypercomplex manifolds admit a natural orientation, which coincides with
the orientation induced by any of the three almost complex structures.

\begin{definition}
A \textit{hyperk\"ahler structure} on a manifold $M$ consists of a
(pseudo-)Riemannian metric $g \in \Gamma(S^2 T^*M)$ and an almost
hypercomplex structure $(I, J, K)$ such that
\begin{itemize}
\item for each $x \in M$, $(g_x, (I_x, J_x, K_x))$ is a Hermitian hypercomplex structure on $T_x M$, and
\item $I$, $J$, and $K$ are each parallel with respect to the Levi-Civita connection $\nabla$ of $g$.
  \end{itemize}
In particular, since $I$, $J$ and $K$ are preserved by a torsion-free connection, they are necessarily integrable, i.e.\,$(I,J,K)$ is a hypercomplex structure.
\end{definition}

Put another way, a \textit{hyperk\"ahler structure} on a manifold $M$
is a quaternionic K\"ahler structure $(g, Q)$ on $M$ together with
a global frame of $Q$ that is parallel with respect to the
Levi-Civita connection $\nabla$ of $g$. Put yet another way, a
hyperk\"ahler structure on a manifold $M$ consists of a metric $g \in
\Gamma(S^2 T^*M)$ together with a triple $(\bbI, \bbJ, \bbK)$ of
complex structures satisfying the quaternionic relations such that
$(g, \bbI)$, $(g, \bbJ)$, $(g, \bbK)$ are K\"ahler structures.

A connected (pseudo-)Riemannian manifold $(M, g)$ of signature $(4 p,
4 q)$ admits a hyperk\"ahler structure if and only if the holonomy of
$\nabla$ is contained in $\Sp(p, q)$. Locally, hyperk\"ahler
structures are precisely the quaternionic K\"ahler structures that are
Ricci-flat; see e.g.\,Corollary 12.2.14 in
\cite{BG-book}.\footnote{Some sources define quaternionic K\"ahler
  structures to exclude hyperk\"ahler structures.}

\subsection{$3$-Sasaki structures} \label{3-S} We recall some background on Sasaki and $3$-Sasaki structures; for a comprehensive treatment we refer to the monograph \cite{BG-book}.

\begin{definition}\label{Def_Sasaki}
A \textit{Sasaki structure} on a manifold $M$ consists of a (pseudo-)Riemannian metric $g_{ab} \in \Gamma(S^2 T^*M)$ and a Killing field $k^a \in \Gamma(TM)$ of $g$ such that
\begin{enumerate}
	\item $g_{ab} k^a k^b = 1$
	\item $\nabla_a\nabla_b k^c = -g_{ab} k^c + \delta^c{}_a k_b .$
\end{enumerate}
\end{definition}

The following equivalent characterisation is well-known; see \cite{BG-3Sas, BG-book}.

\begin{proposition}\label{proposition:metric-cone-characterization}
Let $(M, g)$ be a (pseudo-)Riemannian manifold and
\[
    (\wh M, \hat g) := (M \times \bbR_+, dt^2 + t^2 g)
\]
its metric cone, where $t$ is the standard coordinate on $\bbR_+$ and we identify $M \leftrightarrow M \times \{1\}$.
\begin{enumerate}

\item If $k \in \Gamma(TM)$ is a vector field such that $(M, g, k)$ is
  a Sasaki manifold, define $\bbK \in \Gamma(\End(T\wh M))$ with
  respect to the splitting $T\wh M = T(M \times \bbR_+) \cong TM
  \oplus T\bbR_+$ by declaring
  \[
    \bbK(v) := \nabla_v k \quad \textrm{for } v \in \{k\}^{\perp} \cap TM, \qquad
    \bbK(k) := t \partial_t, \qquad \textrm{and} \qquad
    \bbK(t\partial_t) := -k .
  \]
  Then, $\bbK$ is a complex structure, and $(\wh M,
  \hat g, \bbK)$ is a K\"ahler manifold.

\item Conversely, if $\bbK \in \Gamma(\End(T\wh M))$ is a complex
  structure for which $(\wh M, \hat g, \bbK)$ is a K\"ahler structure,
  define $k \in \Gamma(TM)$ by $k := \bbK \partial_t \vert_M$. Then,
  $(M, g, k)$ is a Sasaki structure.
\end{enumerate}
\end{proposition}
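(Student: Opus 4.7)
The plan is to reduce both directions to direct computations using the cone connection formulas and a single algebraic consequence of the Sasaki equation. First I would introduce the endomorphism field $\phi \in \Gamma(\End(TM))$ defined by $\phi^c{}_b := \nabla_b k^c$ and extract from Definition \ref{Def_Sasaki} three consequences: $\phi$ is $g$-skew (because $k$ is Killing), $\phi(k)=0$ (because $g(k,k)=1$ forces $k_c \phi^c{}_b = 0$), and
\[
(\nabla_v \phi)(w) = -g(v,w)\,k + g(w,k)\,v,
\]
which on setting $w = k$ yields $\phi^2(v) = -v + g(v,k)\,k$. Thus $\phi$ preserves $\{k\}^\perp$ and restricts there to a $g$-Hermitian almost complex structure---precisely what is needed for the formulas in part (1) to be algebraically consistent.

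For part (1), the verifications of $\bbK^2 = -\id$ and of $\hat g$-Hermiticity are then routine: the former reduces on $\{k\}^\perp$ to $\phi^2 = -\id$ and on $\mathrm{span}\{k, r\}$ (where $r := t \partial_t$) to the declared swap, while the latter reduces to $g(\phi v, \phi w) = g(v,w)$ on $\{k\}^\perp$ together with $|k|_{\hat g}^2 = t^2 = |r|_{\hat g}^2$. The substantive step is $\hat\nabla \bbK = 0$, for which I would apply the standard cone formulas
\[
\hat\nabla_X Y = \nabla_X Y - t\,g(X,Y)\,\partial_t, \qquad \hat\nabla_{\partial_t} X = \hat\nabla_X \partial_t = \tfrac{1}{t} X, \qquad \hat\nabla_{\partial_t} \partial_t = 0,
\]
valid for $X, Y$ tangent to $M$ lifted trivially to $\wh M$, and check $(\hat\nabla_X \bbK)(Y) = 0$ on all pairs from $\{\partial_t, k, v\}$ with $v \in \{k\}^\perp$. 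The cases involving $\partial_t$ cancel because the $\tfrac{1}{t}$ factor exactly balances the swap $k \leftrightarrow r$; the purely tangential cases use precisely the identity $(\nabla_v \phi)(w) = -g(v,w)\,k + g(w,k)\,v$ derived above, with the correction term $-t\,g(X,Y)\partial_t$ in $\hat\nabla_X Y$ contributing the $\partial_t$-components that $\bbK$ must then supply.

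For the converse I would first observe that $k := \bbK \partial_t|_M$ lies in $TM$, since Hermiticity forces $\hat g(\bbK \partial_t, \partial_t) = -\hat g(\partial_t, \bbK \partial_t) = 0$, and $|k|_g = 1$ from $\hat g(\bbK \partial_t, \bbK \partial_t) = \hat g(\partial_t, \partial_t) = 1$ along $M \times \{1\}$. To treat the Killing and second-derivative conditions simultaneously I would work on the cone with $\tilde k := \bbK(t \partial_t)$. The cone identity $\hat\nabla(t \partial_t) = \id$ combined with $\hat\nabla \bbK = 0$ gives $\hat\nabla_X \tilde k = \bbK X$ for every $X$, which is $\hat g$-skew, so $\tilde k$ is $\hat g$-Killing; and $\hat\nabla_{\partial_t}(\bbK \partial_t) = \bbK(\hat\nabla_{\partial_t} \partial_t) = 0$ shows $\bbK \partial_t$ scales as $1/t$ along cone rays, so $\tilde k$ equals the $\partial_t$-invariant lift of $k$, whence $\mathcal{L}_k g = 0$ on $M$ by the product structure of $\hat g$. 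Differentiating once more, $\hat\nabla \hat\nabla \tilde k = 0$ on $\wh M$; expanding this via the cone formulas for arguments tangent to $M$ converts the trivial cone identity into $\nabla_a \nabla_b k^c = -g_{ab} k^c + \delta^c{}_a k_b$ on $M$, as needed.

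The main obstacle in both directions is the same careful bookkeeping: verifying that the $-t\,g(X,Y)\partial_t$ correction in $\hat\nabla_X Y$ and the $1/t$ factors in $\hat\nabla_{\partial_t}$ conspire with the swap $k \leftrightarrow r$ and the skew endomorphism $\phi$ so that the Sasaki identity on $M$ is exactly equivalent to the apparently trivial statement $\hat\nabla \bbK = 0$ on the cone.
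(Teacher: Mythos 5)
The paper offers no argument for this proposition at all---it is quoted as well known, with a citation to Boyer--Galicki---so there is no internal proof to compare against; your plan of verifying everything directly from the warped-product formulas for $\hat\nabla$ is the standard route and the right idea. Your part (2) is essentially correct and complete in outline: $\hat\nabla(t\partial_t)=\id$ together with $\hat\nabla\bbK=0$ gives $\hat\nabla_X\tilde k=\bbK X$, hence $\tilde k$ is the $t$-independent lift of $k=\bbK\partial_t\vert_M$, $k$ is a unit Killing field, and expanding $\hat\nabla^2\tilde k=0$ on tangential arguments does reproduce condition (b) of Definition \ref{Def_Sasaki} with the paper's signs (the $\partial_t$-components cancel precisely by the Killing equation).

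In part (1), however, the pivotal step is asserted rather than performed, and as asserted it fails: with the signs as printed ($\bbK v=\nabla_v k$, $\bbK k=t\partial_t$, $\bbK(t\partial_t)=-k$) the $1/t$ factors do \emph{not} balance the swap. Indeed $\bbK\partial_t=-\tfrac1t k$ gives $\hat\nabla_X(\bbK\partial_t)=-\tfrac1t\nabla_X k+g(X,k)\partial_t$, while $\bbK(\hat\nabla_X\partial_t)=\tfrac1t\bbK X=\tfrac1t\nabla_X k+g(X,k)\partial_t$, so $(\hat\nabla_X\bbK)\partial_t=-\tfrac2t\nabla_X k\neq0$; the purely tangential cases fail similarly, leaving $2\bigl(g(Y,k)X-g(X,Y)k\bigr)$. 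The endomorphism that is actually parallel---and the one forced by part (2)'s normalisation, since $\hat\nabla_{\partial_t}$-parallelism of $\bbK(t\partial_t)$ shows $\bbK(t\partial_t)$ is the lift of $+k$---is $\bbK v=\nabla_v k$, $\bbK k=-t\partial_t$, $\bbK(t\partial_t)=+k$ (equivalently, keep the printed action on $\operatorname{span}\{k,t\partial_t\}$ but take $\bbK v:=-\nabla_v k$ and recover $k=-\bbK\partial_t$ in (2)). So parts (1) and (2) as printed differ by a sign, and an honest execution of your computation would have detected this; instead your write-up claims the cancellation, which is false for the printed convention. With that one sign corrected your scheme goes through; you should then also record the one-line remark that $\hat\nabla\bbK=0$ for the torsion-free $\hat\nabla$ forces the Nijenhuis tensor of $\bbK$ to vanish, so ``Hermitian and parallel'' genuinely yields K\"ahler, and note that $\phi(k)=0$ uses the skewness of $\phi$ in addition to $k_c\phi^c{}_b=0$.
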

\noindent In particular, any manifold admitting a Sasaki structure has odd dimension.

\medskip

\begin{definition}\label{Def_3Sas}
A \textit{$3$-Sasaki structure} on a manifold $M$ comprises a (pseudo-)Riemannian metric $g_{ab} \in \Gamma(S^2 T^*M)$ and a triple $(i, j, k)$ of Killing fields of $g$, where $i, j$, and $k$ separately satisfy the conditions $(a)$ and $(b)$ of Definition \ref{Def_Sasaki} and together satisfy the following ``quaternionic conditions'':
\begin{enumerate}
    \item[(c)] $g_{ab}i^aj^b=g_{ab}j^ak^b=g_{ab}k^ai^b=0$
    \item[(d)] $[i, j]=-2k$, \qquad $[j, k]=-2i$, \qquad and \qquad $[k,i]=-2j$.
\end{enumerate}
The collection $(M, g, (i, j, k))$ is together called a \textit{$3$-Sasaki manifold}.
\end{definition}

It follows immediately from the definitions that for any $3$-Sasaki manifold $(M, g, (i, j, k))$, each of $(M, g, i)$, $(M, g, j)$, and $(M, g, k)$ are Sasaki manifolds.

Proposition \ref{proposition:metric-cone-characterization} immediately implies an analogous result---a characterisation via the metric cone---for $3$-Sasaki manifolds vis-\`a-vis hyperk\"ahler manifolds; see \cite{BG-3Sas, BG-book}.

\begin{proposition} \label{coneprop}
Let $(M, g)$ be a (pseudo-)Riemannian manifold and
\[
    (\wh M, \hat g) := (M \times \bbR_+, dt^2 + t^2 g)
\]
its metric cone, where $t$ is the standard coordinate on $\bbR_+$, and identify $M \leftrightarrow M \times \{1\}$.
\begin{enumerate}
\item If $(M, g, (i, j, k))$ is a $3$-Sasaki manifold, respectively define $\bbI, \bbJ, \bbK \in \Gamma(T\wh M)$ as in Proposition \ref{proposition:metric-cone-characterization}(a). Then, $(\wh M, \hat g, (\bbI, \bbJ, \bbK))$ is a hyperk\"ahler manifold.
\item For the converse, if $(\wh M, \hat g, (\bbI, \bbJ, \bbK))$ is a hyperk\"ahler manifold, respectively define $i, j, k \in \Gamma(TM)$ as in Proposition \ref{proposition:metric-cone-characterization}(b). Then, $(M, g, (i, j, k))$ is a $3$-Sasaki structure.
\end{enumerate}
\end{proposition}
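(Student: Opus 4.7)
My plan is to reduce both implications to Proposition \ref{proposition:metric-cone-characterization}, which handles the correspondence between a single Sasaki Killing field on $M$ and a single K\"ahler complex structure on the cone. For part (1), applying Proposition \ref{proposition:metric-cone-characterization}(a) to each of $i, j, k$ immediately yields that each of $(\wh M, \hat g, \bbI), (\wh M, \hat g, \bbJ), (\wh M, \hat g, \bbK)$ is K\"ahler; in particular $\hat g$ is Hermitian with respect to each of $\bbI, \bbJ, \bbK$ and each is parallel for the Levi-Civita connection $\hat\nabla$ of $\hat g$. Thus it remains only to establish the hypercomplex relation $\bbI\bbJ\bbK = -\id_{T\wh M}$.

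Since each factor is $\hat\nabla$-parallel, so is their product, and the identity can be checked pointwise; I will work at a single point $(p, 1) \in \wh M$ on the spanning set $\{\partial_t, i_p, j_p, k_p\} \cup V_p$ with $V_p := \{i_p, j_p, k_p\}^\perp \cap T_pM$. The key intermediate step is to derive, from the $3$-Sasaki data, (i) the six covariant-derivative relations
\[
\nabla_i j = -k,\ \nabla_j i = k,\ \nabla_j k = -i,\ \nabla_k j = i,\ \nabla_k i = -j,\ \nabla_i k = j,
\]
and (ii) the quaternionic compatibility $\Phi_i \Phi_j = \Phi_k$ (and cyclic) of the shape operators $\Phi_A \colon X \mapsto \nabla_X A$ restricted to $V_p$. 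For (i), the bracket identities (d) combined with torsion-freeness and the Killing-antisymmetry of each $\Phi_A$ pin down the $\mathrm{span}\{i, j, k\}$-parts of the derivatives; eliminating the $V_p$-parts, and establishing (ii), uses the index-free form of the Sasaki identity (b),
\[
\nabla_X(\nabla_Y A) = \nabla_{\nabla_X Y} A - g(X, Y) A + g(Y, A) X \quad (A \in \{i, j, k\}),
\]
applied cyclically together with (c) and (d). Given (i) and (ii), the identity $\bbI\bbJ\bbK = -\id$ follows by direct substitution into the defining formulas on $\mathrm{span}\{\partial_t, i_p, j_p, k_p\}$, and on $V_p$ from $\Phi_i\Phi_j\Phi_k|_{V_p} = \Phi_k^2|_{V_p} = -\id_{V_p}$, the last equality being a consequence of the K\"ahler property of $\bbK$.

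For part (2), Proposition \ref{proposition:metric-cone-characterization}(b) applied separately to each of $\bbI, \bbJ, \bbK$ immediately yields that $(M, g, i), (M, g, j), (M, g, k)$ are each Sasaki. Condition (c) follows from the Hermitian property of $\hat g$ together with the pairwise anticommutation of $\bbI, \bbJ, \bbK$ (a consequence of $\bbI\bbJ\bbK = -\id$ and $\bbK^2 = -\id$): for instance,
\[
g(i, j) = \hat g(\bbI \partial_t, \bbJ \partial_t) = -\hat g(\partial_t, \bbI\bbJ\partial_t) = \pm\hat g(\partial_t, k) = 0,
\]
since $\partial_t \perp TM$ in $\hat g$; cyclic permutation handles the remaining pairs. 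Condition (d) is obtained by evaluating $\bbI\bbJ = \pm\bbK$ on $\partial_t$ and using the defining formulas of Proposition \ref{proposition:metric-cone-characterization}(b) to read off $\nabla_i j = -k$ and $\nabla_j i = k$, whence $[i, j] = \nabla_i j - \nabla_j i = -2k$; cyclic permutation yields the rest. The principal obstacle is the derivation in part (1) of the six covariant-derivative relations together with the quaternionic compatibility of the shape operators on $V_p$; this is where the individual Sasaki identities (b) must be combined with the bracket relations (d) and the orthogonality (c) in a coordinated way, and it is the geometric heart of the $3$-Sasaki $\Leftrightarrow$ hyperk\"ahler cone correspondence.
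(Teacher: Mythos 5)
Your proposal is correct, and it is in fact more detailed than what the paper offers: the paper gives no written proof of Proposition \ref{coneprop}, declaring it to follow ``immediately'' from Proposition \ref{proposition:metric-cone-characterization} and citing \cite{BG-3Sas, BG-book}; the quaternionic compatibility identities that carry the real content are recorded separately, without proof, as Proposition \ref{ident_3Sas} (quoted from \cite{BG-3Sas}). Your reduction to the single-Sasaki correspondence is exactly the intended route, and the extra material in your part (1)---the relations $\nabla_i j=-k$ (and cyclic) and $\Phi_i\Phi_j=\Phi_k$ on $V_p$---is precisely Proposition \ref{ident_3Sas}(a),(c), so what you are really sketching is a proof of that cited result. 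Two caveats on the sketch. First, eliminating the $V_p$-components is the one step that genuinely needs an argument beyond what you write: the $\operatorname{span}\{i,j,k\}$-parts do follow from (c), (d), torsion-freeness and Killing skewness as you say, but to kill the $V_p$-part of $\nabla_i j+k$ one needs something like $\mathcal L_i\Phi_j=-2\Phi_k$ (Killing property plus (d)) combined with the index-free Sasaki identity and $\Phi_A^2=-\id+A\otimes A^\flat$, concluding with an eigenvalue count (one obtains $\Phi_i^2 v=-9v$ versus $\Phi_i^2v=-v$, forcing $v=0$); in particular, in indefinite signature one cannot stop at $g(v,v)=0$. Once these first-order relations are in hand, your item (ii) is immediate: differentiating $\nabla_i j=-k$ and inserting the Sasaki identity gives $\Phi_j\Phi_i=-\Phi_k+j\otimes i^\flat$, i.e.\ Proposition \ref{ident_3Sas}(c), so your ordering of steps is right and the pointwise check on $\{\,t\partial_t, i,j,k\,\}\cup V_p$ goes through. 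Second, in part (2), ``reading off'' $\nabla_j i=k$ from $\bbI\bbJ=\bbK$ applied to $\partial_t$ uses more than the defining formula $i=\bbI\partial_t\vert_M$: one needs $\hat\nabla_X(t\partial_t)=X$ together with the Gauss formula relating $\hat\nabla$ and $\nabla$ along $M\times\{1\}$ (equivalently, that the constructions (a) and (b) of Proposition \ref{proposition:metric-cone-characterization} are mutually inverse), and the sign is $\bbI\bbJ=+\bbK$, not $\pm\bbK$. These are fixable details rather than gaps in the approach.
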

\noindent In particular, if a manifold $M$ admits a $3$-Sasaki structure, then $\dim M \equiv 3 \pmod 4$.

Given a $3$-Sasaki structure on a manifold $M$, the identification $M
\leftrightarrow M \times \{1\}$, the canonical orientation on its
hyperk\"ahler cone, and the preferred unit normal vector field
$\partial_t \vert_M$ along $M$, together determine a canonical orientation and
hence (via the metric) a preferred volume form on $M$.
Moreover, a metric cone of a (pseudo-)Riemannian manifold $(M, g)$ is Ricci-flat if and only $g$ is Einstein \cite[Lemma 11.1.15]{BG-book}. Thus Ricci-flatness of hyperk\"ahler manifolds and Proposition \ref{coneprop} imply the following (see \cite[Corollary 13.3.2]{BG-book}):
\begin{proposition}\label{3-Sask-Einstein}
Suppose $(M, g, (i, j ,k))$ is a $3$-Sasaki manifold. Then $g$ is Einstein with Einstein constant $\dim(M)-1$.
\end{proposition}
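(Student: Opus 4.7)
The plan is to derive the statement essentially as a corollary of the metric cone picture that has just been established. By Proposition \ref{coneprop}, the standard metric cone $(\wh M, \hat g) = (M \times \bbR_+, dt^2 + t^2 g)$ of a $3$-Sasaki manifold $(M, g, (i, j, k))$ carries a hyperk\"ahler structure $(\bbI, \bbJ, \bbK)$. Since hyperk\"ahler metrics have holonomy contained in $\Sp(p, q)$, and this group sits inside $\SL(m+1, \bbH) \subset \SL(4m+4, \bbR)$, the Levi-Civita connection of $\hat g$ preserves a parallel volume form and a parallel complex volume form; in particular $\hat g$ is Ricci-flat. (This is the content of the footnoted remark just above Section \ref{3-S}, and is also recorded as Corollary 12.2.14 in \cite{BG-book}.)

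Next, I would invoke the standard fact relating the Ricci curvature of a metric cone to that of its base, namely that for the warped product $\hat g = dt^2 + t^2 g$ on $M \times \bbR_+$, the radial direction $\partial_t$ is always Ricci-flat, and restricted to $TM$-directions the Ricci tensor satisfies
\[
    \widehat{\Ric}(X, Y) \big|_{M \times \{1\}} = \Ric_g(X, Y) - (n - 1)\, g(X, Y),
\]
where $n = \dim M$. This is precisely the content of \cite[Lemma 11.1.15]{BG-book} cited in the excerpt. Setting $\widehat{\Ric} = 0$ immediately yields $\Ric_g = (n-1)\, g$, which is the desired Einstein condition with constant $\dim(M) - 1$.

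No step here is a genuine obstacle, since Proposition \ref{coneprop} has already packaged the nontrivial identification with hyperk\"ahler cones; what remains is to combine Ricci-flatness of hyperk\"ahler manifolds with the classical cone-to-base Ricci formula. If anything, the only mild subtlety is a sign and normalisation check, to ensure that the convention for the metric cone used in Proposition \ref{coneprop} (namely $\hat g = dt^2 + t^2 g$ with $M$ identified as the slice $t = 1$) matches the convention underlying the Ricci formula, and thus produces the specific Einstein constant $n - 1$ rather than an off-by-one variant. Apart from that bookkeeping, the proof is essentially an invocation of the two cited results.
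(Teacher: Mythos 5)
Your proposal is correct and follows essentially the same route as the paper, which likewise deduces the statement by combining Proposition \ref{coneprop} (the cone is hyperk\"ahler), Ricci-flatness of hyperk\"ahler metrics, and the fact that a metric cone is Ricci-flat if and only if the base is Einstein with constant $\dim(M)-1$ \cite[Lemma 11.1.15]{BG-book}. One small caveat: a parallel \emph{real} volume form (holonomy in $\SL(4m+4,\bbR)$) does not by itself give Ricci-flatness --- the relevant point is $\Sp(p,q)\subseteq\SU(2p,2q)$, i.e.\ a parallel complex volume form compatible with a K\"ahler structure, which is the standard fact the paper simply cites from \cite{BG-book}.
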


Finally, we recall in this section some important identities for $3$-Sasaki manifolds; see e.g. \cite[Proposition 1.2.4]{BG-3Sas} taking our different sign conventions into account.
\begin{proposition}\label{ident_3Sas}
Suppose $(M, g, (i, j ,k))$ is a $3$-Sasaki manifold. Then $i, j, k$ satisfy the following identities and their cyclic permutations in $i, j, k$:
\begin{enumerate}
    \item $i^a\nabla_aj^b=-j^a\nabla_{a}i^b=-k^b$;
    \item $\nabla_bi^c\nabla_a i^b-i_ai^c=-\delta_{a}{}^c$;
    \item $\nabla_bi^c\nabla_aj^b-j_ai^c=-\nabla_bj^c\nabla_ai^b+i_aj^c=\nabla_ak^c$.
\end{enumerate}
\end{proposition}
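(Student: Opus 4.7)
The plan is to verify each identity by a short tensor calculation that uses only the data of Definition~\ref{Def_3Sas}: mutual orthogonality of the unit Killing fields $i,j,k$, the Sasaki relation
\[
\nabla_a\nabla_b v^c = -g_{ab}\, v^c + \delta^c{}_a\, v_b \qquad \textrm{for } v \in \{i,j,k\},
\]
and the bracket relations $[i,j]=-2k$ and cyclic permutations.

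For~(a), I would first differentiate the orthogonality $i^a j_a = 0$ and then apply the Killing equations for $i$ and $j$ to flip indices; a short manipulation yields the symmetry
\[
i^a \nabla_a j^b = -j^a\nabla_a i^b .
\]
Substituting this into the bracket identity $i^a\nabla_a j^b - j^a\nabla_a i^b = [i,j]^b = -2 k^b$ collapses the left-hand side to $2\, i^a\nabla_a j^b$ and delivers~(a); the remaining cyclic versions follow identically. For~(b), I would start from $i^a\nabla_a i^c = 0$ (immediate from Killing antisymmetry together with $g(i,i)=1$), differentiate in direction $b$, and substitute the Sasaki formula for $\nabla_b\nabla_a i^c$:
\[
0 = (\nabla_b i^a)(\nabla_a i^c) + i^a\bigl(-g_{ba}\, i^c + \delta^c{}_b\, i_a\bigr) = (\nabla_b i^a)(\nabla_a i^c) - i_b i^c + \delta^c{}_b .
\]
Renaming indices delivers~(b).

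For~(c), I would differentiate each half of~(a). Applying $\nabla_a$ to $j^b\nabla_b i^c = k^c$ and substituting the Sasaki formula for $\nabla_a\nabla_b i^c$, the resulting term $\delta^c{}_a\, j^b i_b$ vanishes by $j \perp i$, leaving
\[
\nabla_b i^c \,\nabla_a j^b - j_a i^c = \nabla_a k^c,
\]
which is the first equality. Differentiating $i^b\nabla_b j^c = -k^c$ in the same way and invoking $i \perp j$ produces the middle equality.

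I do not anticipate any serious obstacle: each identity reduces to careful bookkeeping of Killing antisymmetry combined with a single substitution of the Sasaki formula. As an independent sanity check, lifting $i,j,k$ to the hyperk\"ahler cone via Proposition~\ref{coneprop} recovers~(b) and~(c) as the shadows of the pointwise identities $\bbI^2 = -\id$ and $\bbI\bbJ = \bbK$ applied to vectors tangent to $M$.
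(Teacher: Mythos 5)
Your computations are correct: the index-flipping via Killing antisymmetry plus $i^aj_a=0$ does give $i^a\nabla_a j^b=-j^a\nabla_a i^b$, which combined with $[i,j]^b=i^a\nabla_aj^b-j^a\nabla_ai^b=-2k^b$ yields (a); differentiating $i^a\nabla_a i^c=0$ and $j^b\nabla_b i^c=k^c$, $i^b\nabla_b j^c=-k^c$ and substituting the Sasaki identity $\nabla_a\nabla_b v^c=-g_{ab}v^c+\delta^c{}_a v_b$ gives (b) and both halves of (c), with the orthogonality conditions killing the trace terms exactly as you say. The one point of comparison worth noting is that the paper does not prove this proposition at all: it simply cites Boyer--Galicki (Proposition 1.2.4 of their $3$-Sasakian survey) ``taking our different sign conventions into account.'' Your argument is therefore not a variant of the paper's proof but a self-contained replacement for the citation, and it has the modest advantage of being carried out entirely within the paper's own sign conventions (the bracket normalisation $[i,j]=-2k$ and the stated form of the Sasaki condition), so no convention translation is needed. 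The closing cone-lift sanity check is consistent with how these identities are used later (compare the matrix verification of $\bbI\bbJ=\bbK$ in the proof of Theorem \ref{thmA}), though it is not needed for the proof itself.
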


\subsection{Affine and projective geometry}\label{subsection:affine-projective-geometry}

We briefly review the notions from affine and projective geometry we
need here. For more detailed background of these ideas with a view
toward the present context, see \cite[Sections 2 and 4.1]{GNW}.

A \textit{(torsion-free) affine
 connection} on a manifold $M$ is a (torsion-free) linear connection $\nabla$ on its tangent bundle $TM$.
For an affine manifold $(M,\nabla)$ we denote by
$$R_{ab}{}^c{}_d\in\Gamma(\Wedge^2 T^*M\otimes \End(TM))$$ the
curvature and by $\Ric_{bd}:=R_{cb}{}^c{}_d$ the \emph{Ricci tensor} of
$\nabla$.
Two affine connections $\nabla$ and $\widehat\nabla$ are \textit{projectively equivalent} if they have the same unparametrised geodesics. If both connections are torsion-free, this is well-known to be equivalent to the existence of
a $1$-form $\Upsilon_a\in\Gamma(T^*M)$ such that
\begin{equation}\label{projective_change}
	\widehat{\nabla}_a\xi^b=\nabla_a\xi^b+\Upsilon_a\xi^b+\delta^b{}_a\Upsilon_c\xi^c
\end{equation}
for any vector field $\xi^a\in\Gamma(TM)$.

\begin{definition}
A \textit{projective structure} on a manifold $M$ is an equivalence
class $\mbp$ of projectively equivalent torsion-free affine
connections on $M$, and we call such a structure $(M, \mbp)$ a
\textit{projective manifold}.
\end{definition}

In the following proposition we record some well-known facts about certain important tensors in projective differential geometry (cf. \cite[Section 3]{BEG}):

\begin{proposition}\label{projective_tensors}
Suppose $(M,\mbp)$ is a projective manifold of dimension $n+1\geq
2$. Let $\nabla\in \mbp$ be a connection in the projective class. Then
the curvature $R$ of $\nabla$ admits the decomposition
$$ 
R_{ab}{}^c{}_d= W_{ab}{}^c{}_d +2 \delta^c{}_{[a} \Rho_{b]d}-2\Rho_{[ab]}\delta^c{}_d.
$$ 
Here: $W_{ab}{}^c{}_d$ is the totally trace-free part of $R_{ab}{}^{c}{}_d$, which is independent of the choice of ${\nabla\in\mbp}$, and is called the \textit{projective Weyl curvature} of $(M,\mbp)$; $\Rho_{ab}$ is the projective Schouten tensor of $\nabla$ given by
$$\Rho_{ab}:=\tfrac{1}{n(n+2)}\left[ (n+1) \Ric_{ab}+\Ric_{ba}\right].
$$ Moreover,
$$
\nabla_c W_{ab}{}^c{}_{d}=(n-1)C_{abd},
$$ 
where
$$ 
C_{abc}:= \nabla_a \Rho_{bc}-\nabla_b \Rho_{ac}
$$ 
is called the \textit{projective Cotton tensor} of $\nabla$.
Finally, if $\wh\nabla\in\mbp$ is another connection in the projective class, related to $\nabla$ via $\Upsilon_a$ as in \eqref{projective_change}, then
\begin{align}\label{change_rho}
&\wh \Rho_{ab}=\Rho_{ab}-\nabla_a\Upsilon_b+\Upsilon_a\Upsilon_b,\\
&\wh C_{abc}=C_{abc}+\Upsilon_d W_{ab}{}^d{}_c.
\end{align}
\end{proposition}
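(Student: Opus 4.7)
The plan is to verify the three assertions of the proposition in turn: the curvature decomposition with the formula for $\Rho$, the divergence identity for $W$, and the transformation laws under projective change of connection.

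For the decomposition, I would identify the two independent traces of $R$. The first is $\Ric_{bd} = R_{cb}{}^c{}_d$; the second, $R_{ab}{}^c{}_c$, is determined by contracting $c = d$ in the algebraic Bianchi identity $R_{[ab}{}^c{}_{d]} = 0$, yielding $R_{ab}{}^c{}_c = -2\Ric_{[ab]}$. Setting $W_{ab}{}^c{}_d := R_{ab}{}^c{}_d - 2\delta^c{}_{[a}\Rho_{b]d} + 2\Rho_{[ab]}\delta^c{}_d$, the trace-freeness condition $W_{cb}{}^c{}_d = 0$ translates to the linear equation $\Ric_{bd} = (n+1)\Rho_{bd} - \Rho_{db}$. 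Splitting this into symmetric and antisymmetric parts gives $\Rho_{(bd)} = \tfrac{1}{n}\Ric_{(bd)}$ and $\Rho_{[bd]} = \tfrac{1}{n+2}\Ric_{[bd]}$, which combine to the stated formula. A short computation then shows that the remaining trace $W_{ab}{}^c{}_c$ vanishes automatically, consistent with $R_{ab}{}^c{}_c = -2\Ric_{[ab]}$.

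For the divergence identity, I would contract the differential Bianchi identity $\nabla_{[e}R_{ab]}{}^c{}_d = 0$ on $e = c$ to obtain $\nabla_c R_{ab}{}^c{}_d = \nabla_a \Ric_{bd} - \nabla_b \Ric_{ad}$. Rewriting both sides using $\Ric_{bd} = (n+1)\Rho_{bd} - \Rho_{db}$ and the decomposition of $R$, rearrangement yields
\[
    \nabla_c W_{ab}{}^c{}_d = (n-1)C_{abd} + 6\nabla_{[a}\Rho_{bd]}.
\]
The delicate step is showing that the final term vanishes. For this I would contract the differential Bianchi identity instead on the pair $c = d$, giving $\nabla_{[e}R_{ab]}{}^c{}_c = 0$, and then invoke $R_{ab}{}^c{}_c = -2\Ric_{[ab]}$ together with the observation that the totally antisymmetrized derivative of any symmetric $2$-tensor vanishes, producing $\nabla_{[e}\Ric_{ab]} = 0$. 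Since $\nabla_{[a}\Rho_{bd]}$ only sees the antisymmetric part $\Rho_{[bd]} = \tfrac{1}{n+2}\Ric_{[bd]}$, this forces $\nabla_{[a}\Rho_{bd]} = 0$ and completes the identity. Correctly tracking the antisymmetric part of $\Rho$ throughout is what I expect to be the main obstacle, since the nonsymmetry of $\Ric$ in the general affine setting is easy to overlook.

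For the transformation laws, I would compute $\hat R_{ab}{}^c{}_d$ directly from the definition of curvature using \eqref{projective_change}; this is a routine if slightly involved abstract-index calculation. Tracing on $c = a$ yields $\hat\Ric_{bd}$ in terms of $\Ric_{bd}$ and derivatives of $\Upsilon$, and applying $\Rho_{ab} = \tfrac{1}{n(n+2)}[(n+1)\Ric_{ab} + \Ric_{ba}]$ to both sides gives the stated formula $\hat\Rho_{ab} = \Rho_{ab} - \nabla_a\Upsilon_b + \Upsilon_a\Upsilon_b$. The projective invariance of $W$ follows at once, since the difference $\hat R - R$ has precisely the form of the ``trace parts'' of the decomposition with $\Rho$ replaced by $\hat\Rho - \Rho$. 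Using this invariance together with the already-established divergence identity gives $(n-1)\hat C_{abd} = \hat\nabla_c W_{ab}{}^c{}_d$; expanding the right-hand side via the change-of-connection rule on $(1,3)$-tensors, every correction term is a trace of $W$ and therefore vanishes, save for one contribution of $(n-1)\Upsilon_e W_{ab}{}^e{}_d$, which upon division by $n-1$ yields the stated formula.
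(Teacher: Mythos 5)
Your proposal is essentially correct, and it follows the standard route: the paper itself gives no proof of this proposition (it is quoted as well known, with a citation to \cite[Section 3]{BEG}), and what you outline is precisely the classical verification from the two Bianchi identities that underlies that reference. Your intermediate steps check out: the trace condition $W_{cb}{}^c{}_d=0$ does give $\Ric_{bd}=(n+1)\Rho_{bd}-\Rho_{db}$ and hence the stated $\Rho$; the second trace $W_{ab}{}^c{}_c$ vanishes because $R_{ab}{}^c{}_c=-2\Ric_{[ab]}$ and $\Rho_{[ab]}=\tfrac{1}{n+2}\Ric_{[ab]}$; the contracted differential Bianchi identity together with the decomposition does yield $\nabla_cW_{ab}{}^c{}_d=(n-1)C_{abd}+6\nabla_{[a}\Rho_{bd]}$, and the closedness of the $2$-form $R_{ab}{}^c{}_c$ kills the last term; and in the change-of-connection computation for $\hat\nabla_cW_{ab}{}^c{}_d$ the surviving coefficient is indeed $1+(n+1)-3=n-1$ after all traces of $W$ are discarded.

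The one genuine (if small) gap is the case $n=1$, i.e.\ $\dim M=2$, which the proposition as stated includes. Your derivation of the transformation law for $C$ proceeds by dividing the identity $(n-1)\wh C_{abd}=\hat\nabla_cW_{ab}{}^c{}_d=(n-1)C_{abd}+(n-1)\Upsilon_cW_{ab}{}^c{}_d$ by $n-1$, which gives no information when $n=1$; there the divergence identity is vacuous ($W=0$ and both sides vanish), but the asserted formula still has content, namely $\wh C_{abc}=C_{abc}$, which is the classical projective invariance of the Liouville/Cotton tensor in dimension $2$ (cf.\ the remark following the proposition). To close this you should either restrict the last displayed formula of your argument to $n\geq 2$ and handle $n=1$ by a direct computation of $\wh C_{abc}=\hat\nabla_a\wh\Rho_{bc}-\hat\nabla_b\wh\Rho_{ac}$ from \eqref{change_rho} and the change-of-connection rule on $T^*M\otimes T^*M$, or simply carry out that direct computation in all dimensions, which produces $\wh C_{abc}=C_{abc}+\Upsilon_dW_{ab}{}^d{}_c$ without ever dividing by $n-1$. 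Everything else in your argument is dimension-uniform and needs no change.
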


\begin{remark} If $n=1$ in Proposition \ref{projective_tensors}, that is, if $\dim(M)=2$, then the symmetries of $W_{ab}{}^c{}_d$ imply that $W = 0$, and $\wh C_{abc}=C_{abc}$.
\end{remark}

Following \cite{BEG}, we fix the following notation for the line bundles on a projective manifold:
\begin{notation} For a projective manifold $(M, \mbp)$ of dimension $n + 1 \geq 2$ we define ${\mathcal K := (\Wedge^{n + 1} TM)^{\otimes 2}}$. Note that $\mathcal K$ enjoys a canonical orientation, and hence we may define preferred oriented roots $\mathcal{E}(w):= \mathcal K^{w / (2 n + 4)}$ for any $w\in\mathbb R$. For any vector bundle $\mathcal V \to M$ we denote $\mathcal V(w):= \mathcal V\otimes \mcE(w)$.
\end{notation}

An immediate consequence of \eqref{projective_change} is:

\begin{proposition}\label{prop_scale}
Suppose $(M, \mbp)$ is a projective manifold and $0\neq w\in \mathbb R$. Then mapping a connection in $\mbp$ to its induced connection on $\mcE(w)$ induces an (affine)
bijection between connections in $\mbp$ and linear connections on $\mcE(w)$. In particular, any trivialisation of $\mcE(w)$, viewed as a  nowhere-vanishing section
$\sigma\in\mcE(w)$, gives rise to a connection $\nabla$ in $\mbp$ characterised by $\nabla_a\sigma=0$.
\end{proposition}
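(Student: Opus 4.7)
The plan is to identify both sides as affine spaces over $\Omega^1(M)$ of the same ``dimension'' and compute that the map is an affine isomorphism; the second statement then follows from bijectivity applied to the unique flat connection on $\mcE(w)$ annihilating $\sigma$.

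First I would fix a reference connection $\nabla \in \mbp$ and recall that any other $\widehat\nabla \in \mbp$ is determined uniquely by a $1$-form $\Upsilon \in \Omega^1(M)$ via \eqref{projective_change}. Thus $\mbp$ is an affine space modelled on $\Omega^1(M)$. On the other hand, since $\mcE(w)$ is a line bundle, the space of linear connections on $\mcE(w)$ is also an affine space modelled on $\Omega^1(M)$, where the action is $(D, \alpha) \mapsto D + \alpha \cdot \mathrm{id}$. The given map is affine once we verify it is the restriction of an affine map of modelling spaces; so everything reduces to computing the induced change of connection on $\mcE(w)$ corresponding to $\Upsilon$.

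Next I would carry out this computation. Applying \eqref{projective_change} to a local frame and taking the trace, the induced change of connection on $\Wedge^{n+1} TM$ is multiplication by $(n+1)\Upsilon_a + \Upsilon_a = (n+2)\Upsilon_a$ (the ``$\delta^b{}_a$'' term contributes only its trace after alternation). Consequently the induced change on $\mathcal K = (\Wedge^{n+1} TM)^{\otimes 2}$ is multiplication by $2(n+2)\Upsilon_a$, and on its oriented root $\mcE(w) = \mathcal K^{w/(2n+4)}$ is
\begin{equation*}
    \widehat D_a - D_a = \tfrac{w}{2(n+2)} \cdot 2(n+2)\, \Upsilon_a = w\, \Upsilon_a .
\end{equation*}
Hence the affine map in question is, at the level of modelling spaces, the linear map $\Upsilon \mapsto w\,\Upsilon$ on $\Omega^1(M)$. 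Since $w \neq 0$, this is an isomorphism, and therefore the original map between connections in $\mbp$ and linear connections on $\mcE(w)$ is an affine bijection.

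For the final assertion I would note that any nowhere-vanishing section $\sigma \in \Gamma(\mcE(w))$ determines a unique linear connection $D$ on the line bundle $\mcE(w)$ by the requirement $D\sigma = 0$ (in a local trivialisation by $\sigma$ this $D$ is simply $d$). Applying the bijection just established, there exists a unique $\nabla \in \mbp$ inducing $D$ on $\mcE(w)$, and this $\nabla$ is characterised by $\nabla_a \sigma = 0$. The only place where care is needed is the trace computation on $\Wedge^{n+1}TM$; no genuine obstacle arises because all the structural facts are encoded in \eqref{projective_change} and the functoriality of induced connections on tensor and density bundles.
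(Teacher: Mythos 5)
Your proof is correct and is essentially the argument the paper intends: the paper states this proposition without proof as an ``immediate consequence'' of \eqref{projective_change}, and your computation that the induced change of connection on $\mcE(w)$ is $\widehat D_a-D_a=w\,\Upsilon_a$ (via the trace $(n+2)\Upsilon_a$ on $\Wedge^{n+1}TM$ and the root $\mcE(w)=\mathcal K^{w/(2n+4)}$) is exactly the verification behind that claim. The conclusion that $\Upsilon\mapsto w\Upsilon$ is invertible for $w\neq 0$, together with the observation that a nowhere-vanishing $\sigma$ determines the flat connection $D\sigma=0$ on the line bundle, correctly yields both the affine bijection and the characterisation $\nabla_a\sigma=0$.
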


\begin{definition}\label{def_scale} Suppose $(M, \mbp)$ is a projective manifold of dimension $n+1$.
For $0\neq w \in\bbR$, a nowhere-vanishing section of $\mcE(w)$ as well as the corresponding connection $\nabla\in\mbp$ (as in Proposition \ref{prop_scale}) is called a \emph{scale} of $(M,\bf{p})$.
Note that for a scale $\nabla\in \mbp$ one has $\Ric_{[ab]}=\Rho_{[ab]}=0$ and thus $\Ric_{ab}=\Ric_{(ab)}=n\Rho_{(ab)}=n\Rho_{ab}$. Note that if $\nabla, \widehat\nabla\in \mbp$ are both scales then the $\Upsilon$ in (\ref{projective_change}) is exact.
\end{definition}

Any projective manifold $(M,\bf{p})$ of dimension $n + 1$ carries a canonical connection on a vector bundle of rank $n+2$ over $M$, and the curvature of that connection is a (sharp) obstruction to $(M,\bf{p})$ being locally projectively flat \cite[Section 3]{BEG}. Recall that a projective manifold is \emph{locally projectively flat} if it is locally projectively equivalent to the standard projective structure on $\R^{n + 1}$---that is, the projective structure induced by the standard flat affine connection on $\R^{n+1}$, whose geodesics are affine lines. Let us explain this in some more detail: the connection is defined on $\mcT^*:= J^1\mcE(1)\rightarrow M$, the
vector bundle of $1$-jets of sections of $\mcE(1)$.
The natural projection $J^1\mcE(1)\rightarrow \mcE(1)$ induces filtrations on $\mcT$ and its dual $\mcT^*$ given by the dual short exact sequences
\begin{equation}\label{filt_tractor}
    0 \to \mcE(-1) \to \mcT \to TM(-1) \to 0
        \qquad\textrm{and}\qquad
    0 \to T^*M(1) \to \mcT^* \to \mcE(1) \to 0 .
\end{equation}
We shall sometimes index sections of the bundles $\mcT$ and
$\mcT^*$ of a projective manifold by upper respectively lower
Latin capital letters, for example writing $t^A$ for a section of
$\cT$.  Using this notation, we identify the inclusion map $\mcE(-1)
\to \mcT$ above with a section $X^A \in \Gamma(\mcT(1))$ and the
inclusion $T^*M(1) \to \mcT^*$ with a section $Z_A{}^a$.

Note that a splitting of the second sequence of \eqref{filt_tractor} is the same as a linear connection on $\mcE(1)$. Hence, by Proposition \ref{prop_scale}, connections of $\mbp$ are in bijection to splittings of the exact sequences \eqref{filt_tractor}. Given a choice of connection $\nabla\in\mbp$, we write
 \begin{equation}\label{split}
 Y_A:\ce(1) \to \mcT^* \qquad \mbox{and} \qquad W^A{}_a: \mcT^*\to T^*M(1),
 \end{equation}
for the bundle maps of the induced splitting of \eqref{filt_tractor}; thus,
$$
 X^AY_A=1, \qquad  Z_A{}^b W^A{}_a=\delta^b{}_a, \qquad \mbox{and} \qquad Y_AW^A{}_a=0.
$$

Then the following fundamental  result for projective manifolds holds; see \cite[Section 3]{BEG} (and for the last statement of the theorem below \cite[Section 4.1]{GNW}).
\begin{theorem}\label{tractor_thm}
Suppose $(M, \mbp)$ is a projective manifold of dimension $n+1\geq 2$.
\begin{enumerate}
\item[(1)] The projective class $\mbp$ induces on $\mcT^*$ (and on all vector bundles arising from tensor operations of $\mcT^*$ and its dual $\mcT$) a projectively invariant linear connection $\nabla^\mcT$, given by
  $$
\nabla^{\mathcal{T}}_a (\mu_b\, ,\, \si) =(\nabla_a\mu_b+ \Rho_{ab}\si\,,\, \nabla_a\si- \mu_a)
  $$
  in the splitting of $J^1\mcE(1)$ determined by $\nabla\in \mbp$.
\item[(2)] The curvature $R^\mcT$ of $(\mcT, \nabla^\mcT)$ is a $2$-form on $M$ with values in the bundle $\mathfrak{sl}(\mcT)$ of trace-free endomorphisms of $\mcT$, and it vanishes identically if and only if $(M,\bf{p})$ is locally projectively flat. The latter is in turn the case if and only if the projective Weyl curvature vanishes identically for $n\geq 2$, and if and only if the projective Cotton tensor vanishes identically for $n=1$.
\end{enumerate}

Moreover, if $M$ is orientable, then an orientation on $M$ induces a
nowhere-vanishing section $\vol$ of $\Wedge^{n+2} \mcT^*$
that is parallel for $\nabla^\mcT$ (and unique up to a non-zero constant factor).
\end{theorem}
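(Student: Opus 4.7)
The plan is to prove Theorem~\ref{tractor_thm} by direct calculation in the splitting $\mcT^*\cong T^*M(1)\oplus\mcE(1)$ determined by a choice of representative $\nabla\in\mbp$. For part (1), I would first determine how the splitting transforms under $\widehat\nabla=\nabla+\Upsilon$: since $\mcE(1)$ is the canonical quotient in \eqref{filt_tractor}, one has $\widehat\sigma=\sigma$, and using $\mcT^*=J^1\mcE(1)$ together with the change $\widehat\nabla_a\sigma=\nabla_a\sigma+\Upsilon_a\sigma$ on $\mcE(1)$ yields $\widehat Y_A=Y_A-\Upsilon_a Z_A{}^a$, and hence $\widehat\mu_b=\mu_b+\Upsilon_b\sigma$. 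Combining this with the change formula $\widehat\nabla_a\mu_b=\nabla_a\mu_b-\Upsilon_b\mu_a$ on $T^*M(1)$ and with \eqref{change_rho}, a direct expansion shows that the components of $\widehat\nabla^\mcT_a(\widehat\mu_b,\widehat\sigma)$ reduce in the $\widehat\nabla$-splitting to those of $\nabla^\mcT_a(\mu_b,\sigma)$. The decisive step is that the $\nabla_a\Upsilon_b\sigma$ and $\Upsilon_a\Upsilon_b\sigma$ terms generated by differentiating $\widehat\mu_b$ exactly cancel the corresponding terms in $\widehat\Rho_{ab}-\Rho_{ab}$.

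For part (2), I fix a scale $\nabla\in\mbp$, so $\Rho_{ab}$ is symmetric and the induced connection on $\mcE(1)$ is flat, and compute $R^\mcT_{ca}(\mu_b,\sigma)=2\nabla^\mcT_{[c}\nabla^\mcT_{a]}(\mu_b,\sigma)$, extending $\nabla^\mcT$ in the natural way to tractor-valued one-forms. After antisymmetrising, the $\mcE(1)$-component vanishes identically (the $\nabla\mu$-terms cancel pairwise, as do the $\Rho\sigma$-terms, using the symmetry of $\Rho$), while the $T^*M(1)$-component reduces to $2\nabla_{[c}\nabla_{a]}\mu_b+C_{cab}\sigma-2\Rho_{[c|b}\mu_{|a]}$. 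Invoking the Ricci identity $2\nabla_{[c}\nabla_{a]}\mu_b=-R_{ca}{}^d{}_b\mu_d$ together with the decomposition $R_{ca}{}^d{}_b=W_{ca}{}^d{}_b+\delta^d{}_c\Rho_{ab}-\delta^d{}_a\Rho_{cb}$ of Proposition~\ref{projective_tensors}, the $\Rho\mu$-terms cancel exactly, leaving
\[
R^\mcT_{ca}(\mu_b,\sigma)=\bigl(-W_{ca}{}^d{}_b\,\mu_d+C_{cab}\,\sigma,\;0\bigr).
\]
The trace over the tractor slot is $-W_{ca}{}^d{}_d=0$ since $W$ is totally trace-free, proving $R^\mcT\in\Gamma(\Wedge^2T^*M\otimes\mathfrak{sl}(\mcT))$. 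For the flatness criterion, local projective flatness evidently gives $W=0$, $C=0$ and hence $R^\mcT=0$; conversely, $R^\mcT=0$ forces both $W=0$ and $C=0$, which via the identity $\nabla_cW_{ab}{}^c{}_d=(n-1)C_{abd}$ from Proposition~\ref{projective_tensors} reduces to $W=0$ alone when $n\geq 2$, while for $n=1$ one has $W\equiv 0$ and the condition reduces to $C=0$. Passing from $R^\mcT=0$ to local projective flatness follows by identifying $\nabla^\mcT$ with the normal projective Cartan connection, whose vanishing curvature characterises local equivalence to the homogeneous model $S^{n+1}=\PSL(n+2,\bbR)/P$.

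For the orientation statement, the filtration \eqref{filt_tractor} yields $\Wedge^{n+2}\mcT^*\cong\Wedge^{n+1}T^*M\otimes\mcE(n+2)=\Wedge^{n+1}T^*M\otimes\mcK^{1/2}$, and an orientation on $M$ selects the square root $\mcK^{1/2}=\Wedge^{n+1}TM$, producing a canonical trivialisation of $\Wedge^{n+2}\mcT^*$ via the natural pairing and hence a preferred nowhere-vanishing section $\vol$. The connection induced by $\nabla^\mcT$ on this line bundle has curvature $\tr R^\mcT=0$ and so is flat; $\vol$ may then be rescaled by a constant to be globally parallel, which determines it uniquely up to a non-zero constant factor. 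The main obstacle I anticipate is the careful bookkeeping in the curvature calculation of part (2): tracking the many $\Rho\mu$-, $\nabla\Rho\sigma$-, and $\Rho\nabla\sigma$-terms generated by two applications of $\nabla^\mcT$, and verifying via the decomposition of $R$ and the symmetry $\Rho_{(ab)}=\Rho_{ab}$ in a scale that only the projectively invariant quantities $W$ and $C$ survive in $R^\mcT$.
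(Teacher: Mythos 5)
Your treatment of parts (1) and (2) is correct, and it is essentially the standard direct computation (the paper itself gives no proof here, citing \cite[Section 3]{BEG} and \cite[Section 4.1]{GNW}): the transformation $\widehat\mu_b=\mu_b+\Upsilon_b\sigma$, $\widehat\sigma=\sigma$ together with \eqref{change_rho} does make the displayed formula scale-independent, and in a scale the curvature does come out as $(\mu_b,\sigma)\mapsto(-W_{ca}{}^d{}_b\mu_d+C_{cab}\sigma,\,0)$, consistent with \eqref{tractor_curvature}, with trace-freeness and the flatness equivalences following as you say (granting, as the paper implicitly does, the identification of $\nabla^\mcT$ with the normal Cartan connection for the converse direction of local flatness).

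The genuine gap is in the last step, concerning the parallel volume form. From $\tr R^\mcT=0$ you only get that the induced connection on the line bundle $\Wedge^{n+2}\mcT^*$ is \emph{flat}; that does not produce a global nowhere-vanishing parallel section (a flat real line bundle can have non-trivial holonomy $\pi_1(M)\to\bbR^*$), and in any case rescaling your canonical section $\vol$ \emph{by a constant} cannot make it parallel unless it already is: if its connection $1$-form in the canonical trivialisation were a non-zero exact form $df$, the parallel section would be $e^{-f}\vol$, a non-constant rescaling, and if the form were closed but not exact there would be no global parallel section at all. What is actually needed (and what the cited \cite[Section 4.1]{GNW} argument amounts to) is the verification that the canonical section is itself parallel. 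Concretely, fix $\nabla\in\mbp$; the orientation determines a canonical section $\epsilon_{a_1\cdots a_{n+1}}$ of $\Wedge^{n+1}T^*M(n+2)$ with $\nabla_a\epsilon_{b_1\cdots b_{n+1}}=0$ for every $\nabla\in\mbp$ (its square is the canonical section of the trivial bundle $(\Wedge^{n+1}T^*M)^{\otimes 2}\otimes\mcK$), and one may take
\[
\vol_{B_0\cdots B_{n+1}}\;\propto\;Y_{[B_0}Z_{B_1}{}^{a_1}\cdots Z_{B_{n+1}]}{}^{a_{n+1}}\,\epsilon_{a_1\cdots a_{n+1}} .
\]
Using $\nabla^\mcT_aY_B=\Rho_{ab}Z_B{}^b$ and $\nabla^\mcT_aZ_B{}^b=-\delta^b{}_aY_B$ (which encode the displayed formula for $\nabla^\mcT$), every term of $\nabla^\mcT_a\vol$ vanishes: differentiating $Y$ produces a wedge of $n+2$ sections of the rank-$(n+1)$ image of $Z$, differentiating any $Z$ produces a repeated $Y$, and $\nabla_a\epsilon=0$. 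Independence of the chosen scale follows from the transformation laws in part (1), and uniqueness up to a non-zero constant is then immediate since a parallel section of a line bundle over a connected base is determined by its value at one point. With this replacement of your final step, the proof is complete.
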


\begin{definition}
Suppose $(M, \mbp)$ is projective manifold. Then the vector bundle $\mcT$ and its dual $\mcT^*$ equipped with the connection $\nabla^\mcT$ of Theorem \ref{tractor_thm} are called the \emph{(standard) tractor} respectively \emph{cotractor bundle} of $(M, \mbp)$, and $\nabla^\mcT$ the \emph{normal tractor (Cartan) connection} of $(M, \mbp)$. Moreover, if  $(M, \mbp)$ is orientable and oriented, we refer to ${\vol \in \Gamma(\Wedge^{n+2} \mcT^*)}$ of Theorem  \ref{tractor_thm} as the \emph{parallel tractor volume form }of $(M,\mbp)$.
\end{definition}

Another important tractor bundle is the following.
\begin{definition} The vector bundle $\mcA:=\mathfrak{sl}(\mcT)$ of trace-free endomorphisms of $\mcT$ (equipped with the connection $\nabla^\mcT$) is called the \emph{adjoint tractor bundle} of $(M,\bf{p})$. It inherits, from \eqref{filt_tractor}, a filtration with a natural projection $\Pi: \mcA\rightarrow TM$.
 \end{definition}

More explicitly, for a choice of connection $\nabla \in \mbp$  (giving rise to a splitting of \eqref{filt_tractor}) the induced filtration of $\mcA$ splits as:
\[
    \mcA\cong TM \oplus {\underbrace{\mathfrak{sl}(TM) \oplus \mcE}_{\End(TM)}} \mathbin{\oplus} {T^* M},
\]
where $\mathfrak{sl}(TM)$ denotes the bundle of trace-free endomorphism of $TM$ and $\mcE=M\times \bbR \rightarrow M$ the trivial bundle.
With respect to such a splitting, any section  $\mathbb A ^A {}_B\in\Gamma(\mcA)$ decomposes as
\begin{equation}\label{equation:adjoint-decomposition}
    \mathbb A^A{}_B = \xi^a W^A{}_a Y_B + \phi^a{}_b W^A{}_a Z_B{}^b -\phi^c{}_c X^A Y_B + \nu_b X^A Z_B{}^b,
\end{equation}
for some $\xi^a \in \Gamma(TM)$, $\phi^a{}_b \in \Gamma(\End TM)$, and $\nu_b \in \Gamma(T^* M)$. We also write such a section more compactly in block matrix notation as:

\[
   \mathbb A^A{}_B= \begin{pmatrix}
        \phi^a{}_b &   \xi^a     \\
         \nu  {}_b & -\phi^c{}_c
    \end{pmatrix}
        \in
    \Gamma
    \begin{pmatrix}
        \End(T  M) & TM    \\
             T^*M  & \mcE
    \end{pmatrix}.
\]
The vector field
\[
    \Pi(\mathbb A)=Z_{A}{}^a X^B \mathbb A^{A}{}_B=\xi^a
\]
is, in particular, independent of the choice of $\nabla$.

For later use we recall that, with respect to a choice
$\nabla\in\mbp$, the curvature $R=R^{\mcT}\in\Gamma(\Wedge^2
T^*M\otimes\mcA)$ of $\nabla^{\mcT}$ may be written as
\begin{equation}\label{tractor_curvature}
R_{ab}{}{}^C{}_{D}=W_{ab}{}^c{}_d W^C{}_c Z_{D}{}^d-C_{abd}Z_{D}{}^d X^C,
\end{equation}
where $W_{ab}{}^c{}_d$ is the projective Weyl curvature and $C_{abd}$
the projective Cotton tensor of $\nabla$, as defined in Proposition
\ref{projective_tensors}.

Parallel sections of $\mcA$ are closely linked to projective
symmetries of $(M,\mbp)$.

\begin{definition}
A vector field $\xi\in\Gamma(TM)$ on a manifold $M$ is called an \emph{affine symmetry} (respectively \emph{projective symmetry}) of a torsion-free affine connection $\nabla$,
if its (local) flow preserves $\nabla$ (respectively the projective class $[\nabla]$ of $\nabla$), equivalently, if $\mathcal L_\xi\nabla=0$ (respectively the trace-free part of $\mathcal L_\xi\nabla$ equals zero).
Here, $\mathcal L_\xi\nabla$ is the \emph{Lie derivative} of $\nabla$ along $\xi$, that is, the $(1,2)$-tensor given by
\begin{equation}\label{Lie_of_nabla}
\mathcal L_\xi\nabla(\eta)=\mathcal L_\xi(\nabla\eta)-\nabla \mathcal L_\xi\eta\quad \textrm{ for } \eta\in\Gamma(TM).
\end{equation}
\end{definition}

We recall the following result from \cite[Section 4.4 and Theorem 4.4]{GNW}:
\begin{theorem}\label{adjoint_tractor_thm}
Suppose $(M,\mbp)$ is a projective manifold of dimension $n+1\geq 3$.
Then,
for any choice $\nabla\in\mbp$, the differential operator $L^{\mcA}:
\Gamma(TM) \rightarrow\Gamma(\mcA)$ given by
\begin{equation}\label{equation:splitting-operator-adjoint}
	L^{\mcA} : \xi^a \mapsto
		\begin{pmatrix}
			  \nabla_b \xi^a - \frac{1}{n +2} \delta^a{}_b \nabla_c \xi^c
			& \xi^a \\
			  -\frac{1}{n + 2} \nabla_b \nabla_c \xi^c{} - \Rho_{bc} \xi^c
			& -\frac{1}{n + 2} \nabla_c \xi^c
		\end{pmatrix} \textrm{.}
\end{equation}
induces a bijection, with inverse $\Pi: \Gamma(\mcA)\rightarrow
\Gamma(TM)$, between vector fields $\xi\in\Gamma(TM)$ that satisfy
\begin{itemize}
\item[(a)] $\xi$ is in the kernel of the projectively invariant differential operator
\begin{align}\label{equation:BGG-operator}
 \mathcal D^{\mcA} :
    \Gamma(TM) \to &\,\Gamma((S^2 T^*M \otimes TM)_{\circ}) \\\nonumber
    \xi^a \mapsto &\,(\nabla_{(b} \nabla_{c)} \xi^a+ \Rho_{(bc)} \xi^a)_\circ,
		\end{align}
where $\,\cdot\,_{\circ}$ denotes the totally trace-free part of $S^2 T^*M \otimes TM$ or the projection thereto, and
\item[(b)] $W_{ab}{}^c{}_d \xi^d= 0$  and $C_{abd} \xi^d=0$,
\end{itemize}
and parallel sections of $\mcA$. Moreover, a solution $\xi$ of $\mathcal D^{\mcA}(\xi)=0$ (satisfying (b)) is a projective symmetry if and only if $W_{d(a}{}^c{}_{b)} \xi^d=0$ (respectively $W_{da}{}^c{}_{b} \xi^d=0$).
\end{theorem}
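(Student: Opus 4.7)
The plan is to use the standard BGG splitting/curved-orbit technique, recognising $L^{\mcA}$ as a first BGG splitting operator. By inspection, the top slot of $L^{\mcA}(\xi)$ is $\xi$ itself, so $\Pi \circ L^{\mcA} = \id_{TM}$ and $L^{\mcA}$ is automatically injective. The substantive content is therefore the two equivalences (i) $\nabla^{\mcT} L^{\mcA}(\xi) = 0 \Leftrightarrow \xi$ satisfies (a) and (b), and (ii) any parallel $\mathbb{A} \in \Gamma(\mcA)$ is of the form $\mathbb{A} = L^{\mcA}(\Pi(\mathbb{A}))$.

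First I would fix $\nabla \in \mbp$, write a general $\mathbb{A} \in \Gamma(\mcA)$ in the block form of (\ref{equation:adjoint-decomposition}), and compute $\nabla^{\mcT}_a \mathbb{A}$ slot-by-slot from $(\nabla^{\mcT}_a \mathbb{A})(t) = \nabla^{\mcT}_a(\mathbb{A}(t)) - \mathbb{A}(\nabla^{\mcT}_a t)$, applying Theorem \ref{tractor_thm}(1). Substituting the specific $\phi$ and $\nu$ from (\ref{equation:splitting-operator-adjoint}), the top-right entry (the $TM$-valued part) of $\nabla^{\mcT}_a L^{\mcA}(\xi)$ vanishes identically; this is the hallmark of a splitting operator and encodes the fact that the trace-adjustment of $\phi^a{}_b$ is the unique choice making it so. What remains splits into (I) the symmetric-trace-free part in $(a,b)$ of the top-left, which upon unwinding is exactly $\mathcal{D}^{\mcA}(\xi)$ of (\ref{equation:BGG-operator}); and (II) the antisymmetric-in-$(a,b)$ part of the top-left together with the bottom-left entry.

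For (II), the antisymmetric part generates $\nabla_{[a}\nabla_{b]}\xi^e = \tfrac{1}{2} R_{ab}{}^e{}_c \xi^c$ by the Ricci identity, while the bottom-left produces $\nabla_a \Rho_{bc}\xi^c$ plus $\Rho$-coupled derivatives of $\xi$. Substituting the decomposition $R_{ab}{}^c{}_d = W_{ab}{}^c{}_d + 2\delta^c{}_{[a}\Rho_{b]d} - 2\Rho_{[ab]}\delta^c{}_d$ of Proposition \ref{projective_tensors}, the Schouten contributions cancel against the $\Rho$-terms already built into the tractor connection, leaving $W_{ab}{}^e{}_d \xi^d$ in the top-left and, after recognising the Cotton tensor $C_{abd} = \nabla_a \Rho_{bd} - \nabla_b \Rho_{ad}$, the term $-C_{abd}\xi^d$ in the bottom-left---precisely the two slots of $R^{\mcT}_{ab}{}^A{}_B X^B$ predicted by (\ref{tractor_curvature}) applied to the top slot of $L^{\mcA}(\xi)$. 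This establishes equivalence (i). For (ii), starting from $\nabla^{\mcT}\mathbb{A} = 0$ with $\xi := \Pi(\mathbb{A})$, the vanishing of the top-right and bottom-right entries of $\nabla^{\mcT}\mathbb{A}$ (together with the adjoint trace constraint built into $\mcA$) uniquely force $\phi$ and $\nu$ into the forms of (\ref{equation:splitting-operator-adjoint}), whence $\mathbb{A} = L^{\mcA}(\xi)$; the vanishing of the remaining entries then gives (a) and (b).

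For the symmetry characterisations I would use the standard identity $(\mathcal{L}_\xi \nabla)_{ab}{}^c = \nabla_a \nabla_b \xi^c + R_{ad}{}^c{}_b \xi^d$ valid for any torsion-free $\nabla$. Symmetrising in $(a,b)$ and using the BGG equation to replace $\nabla_{(a}\nabla_{b)}\xi^c$ by $-\Rho_{(ab)}\xi^c$ modulo a term of the pure-trace form $\delta^c{}_{(a}\mu_{b)}$, then substituting the $W$-plus-Schouten decomposition of $R$, the trace-free part of the symmetrised $\mathcal{L}_\xi \nabla$ reduces to $W_{d(a}{}^c{}_{b)}\xi^d$, giving the projective-symmetry criterion; the affine criterion follows identically but without the symmetrisation, producing $W_{da}{}^c{}_b \xi^d = 0$. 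The main obstacle is the bookkeeping in step (II): organising the $\Rho$-coupled second-derivative terms so that the Schouten contributions cancel cleanly and leave precisely the Weyl and Cotton obstructions, with correct tracking of the dimension-dependent coefficients introduced by (\ref{equation:splitting-operator-adjoint})---this implicitly requires $n + 1 \geq 3$ so that $C_{abd}$ is genuinely captured as $\tfrac{1}{n-1}\nabla_c W_{ab}{}^c{}_d$ and is not merely a vacuous trace.
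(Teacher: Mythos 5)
A preliminary remark: this paper does not prove the statement at all --- it is quoted from \cite[Section 4.4 and Theorem 4.4]{GNW} --- so your argument has to stand on its own. Much of it does. Computing $\nabla^{\mcT}_a$ of a general section of $\mcA$ in the splitting \eqref{equation:adjoint-decomposition} is the right route, and the identifications you make are correct as far as they go: the $TM$-slot of $\nabla^{\mcT}_aL^{\mcA}(\xi)$ vanishes identically; for a general parallel $\mathbb A$ the vanishing of that slot and of the scalar slot forces $\phi$ and $\nu$ into the form \eqref{equation:splitting-operator-adjoint}, so every parallel section is $L^{\mcA}$ of its projection; the symmetric part of the $\End(TM)$-slot of $\nabla^{\mcT}_aL^{\mcA}(\xi)$ is identically $\mathcal D^{\mcA}(\xi)$ (its trace part cancels by the exact normalisation of $\Rho$), its antisymmetric part is identically $\tfrac12 W_{ab}{}^c{}_d\xi^d$, and the antisymmetric part of the $T^*M$-slot is $-\tfrac12 C_{abd}\xi^d$. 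That yields $\Pi\circ L^{\mcA}=\id$, injectivity, and the implication ``$L^{\mcA}(\xi)$ parallel $\Rightarrow$ (a) and (b)''.

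The gap is in the converse, which you close with ``precisely the two slots of $R^{\mcT}_{ab}{}^A{}_BX^B$ \dots\ This establishes equivalence (i)''. That identification is true only of the antisymmetric parts. In a scale, the $T^*M$-slot of $\nabla^{\mcT}_aL^{\mcA}(\xi)$ is
\begin{equation*}
-\nabla_a\nabla_b\theta-(\nabla_a\Rho_{bd})\xi^d-\Rho_{bd}\nabla_a\xi^d-\Rho_{ad}\nabla_b\xi^d,
\qquad \theta:=\tfrac{1}{n+2}\nabla_c\xi^c,
\end{equation*}
whose symmetric part contains third derivatives of $\xi$ (through $\nabla_a\nabla_b\nabla_c\xi^c$) and is therefore not a curvature term: with $\Rho=0$ and $\xi$ arbitrary it equals $-\tfrac{1}{n+2}\nabla_a\nabla_b\nabla_c\xi^c$ while $C=0$, so it is not identically $-\tfrac12 C_{abd}\xi^d$, nor is its vanishing a pointwise consequence of (a) and (b). To prove ``(a) and (b) $\Rightarrow$ $L^{\mcA}(\xi)$ parallel'' you must first note that (a) and $W_{ab}{}^c{}_d\xi^d=0$ give the \emph{full} middle-slot equation $\nabla_a\nabla_b\xi^c=-\Rho_{ab}\xi^c+\delta^c{}_b\nabla_a\theta+\delta^c{}_a(\nabla_b\theta+\Rho_{bd}\xi^d)$ (this much your slot analysis does deliver), and then differentiate this equation once more, using the Ricci identity, $\nabla_cW_{ab}{}^c{}_d=(n-1)C_{abd}$ and the hypotheses $W_{ab}{}^c{}_d\xi^d=0$, $C_{abd}\xi^d=0$, to show that the symmetric part of the $T^*M$-slot vanishes as well. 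This prolongation step is the substantive content of the theorem; your outline relegates it to ``bookkeeping'', but it is exactly where the Bianchi identity and the dimension hypothesis are actually used. A smaller caution concerns the last sentence: in the unsymmetrised Lie-derivative computation for the affine criterion, the substitution produces not only $W_{da}{}^c{}_b\xi^d$ but also pure-trace terms of the form $\delta^c{}_b(\nabla_a\theta+2\Rho_{[ad]}\xi^d)+\delta^c{}_a(\cdots)$, and you need to explain why these do not obstruct the stated equivalence (or how the affine statement is to be interpreted) before concluding.
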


In view of Theorem \ref{adjoint_tractor_thm} we fix the following notation:

\begin{notation} \label{not_compl_str}

With respect to a connection $\nabla\in\mbp$, of a projective manifold $(M,\mbp)$ of dimension $n+1$, we write a parallel section $\mathbb A^{A}{}_B\in\Gamma(\mcA)$ of the adjoint tractor bundle
as
\begin{equation*}
   \mathbb{A}^A{}_B=
    \begin{pmatrix}
        \nabla_b \xi^a - \varphi(\xi)\delta^a{}_b &   \xi^a     \\
        -\nabla_b\varphi(\xi) - \Rho_{bc} \xi^c & -\varphi(\xi)
    \end{pmatrix} ,
    \qquad
    \textrm{where} \quad \varphi(\xi) := \frac{1}{n+2}\nabla_a\xi^a .
\end{equation*}

\end{notation}

Note that if $\bbI^A{}_B\in\End(\mcT)$ is a complex structure on the
tractor bundle $\mcT$ of a projective manifold, that is,
$\bbI^A{}_B\bbI^B{}_C=-\delta^A{}_C$, then necessarily one has
$\bbI^A{}_B\in\Gamma(\mcA)$. In \cite{GNW} we studied in detail the
geometric implications of the existence of a parallel complex
structure $\bbI\in\Gamma(\mcA)$, and those of a parallel Hermitian
structure, on the tractor bundle of a connected oriented projective
manifold. In this article we will investigate the geometric
implications of parallel tractor hypercomplex and parallel tractor
hyperk\"ahler structures.

\section{Main Results} \label{mainsect}
We are now ready to present the main results of this article.

\subsection{Projective structures induced by $3$-Sasaki structures}
Since a $3$-Sasaki manifold is Einstein, we have:

\begin{theorem}\label{thmA}
Suppose $(M, g, (i, j, k))$ is a $3$-Sasaki manifold of signature $(4p-1, 4q)$, and denote by $\nabla$ the Levi-Civita connection of $g$. Then the tractor bundle $(\mcT, \nabla^{\mcT})$ of the induced projective manifold $(M, [\nabla])$ admits a parallel tractor hyperk\"ahler structure of signature $(4p, 4q)$ given by:
\begin{enumerate}
\item a parallel tractor metric $h \in \Gamma(S^2 \mcT^*)$ of signature $(4p, 4q)$ on $\mcT$, and
\item a triple $(\bbI, \bbJ, \bbK)$ of parallel tractor complex structures $\bbI, \bbJ, \bbK \in \Gamma(\End \mcT)$ on $\mcT$ each Hermitian with respect to $h$ and together satisfying
    \begin{equation}\label{quaternionic}
         \bbI\bbJ\bbK=-\smash{{\id}_{\mcT}} .
    \end{equation}
\end{enumerate}
\end{theorem}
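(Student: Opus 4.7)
The plan is to build the tractor hyperk\"ahler structure by invoking the Sasaki/projective correspondence developed in \cite{GNW} for each of the three Killing fields $i,j,k$ separately, and then to verify the genuinely new content, namely the quaternionic relation $\bbI\bbJ\bbK=-\id_\mcT$ and the compatibility of the three complex structures with a single parallel tractor metric $h$.

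First, by Proposition~\ref{3-Sask-Einstein} the metric $g$ is Einstein (with $\Rho_{ab}=g_{ab}$ in the Levi-Civita scale, after combining the formula of Proposition~\ref{projective_tensors} with $\dim M=4m+3$), and so the Einstein scale determined by $g$ produces, via the standard tractor construction used in \cite{GNW}, a parallel tractor metric $h\in\Gamma(S^2\mcT^*)$ of signature $(4p,4q)$ on $\mcT$; the extra positive direction corresponds to the scale tractor. Next, for each Killing field $\xi\in\{i,j,k\}$, the triple $(M,g,\xi)$ is a Sasaki manifold in the sense of Definition~\ref{Def_Sasaki}, and by the Sasaki case of \cite{GNW} the splitting operator $L^\mcA$ of Theorem~\ref{adjoint_tractor_thm} sends $\xi$ to a parallel section of $\mcA$. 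The hypotheses of that theorem are verified as follows: $\xi$ is Killing and hence an affine (in particular, projective) symmetry, giving condition~(b), while condition~(a), $\mcD^\mcA(\xi)=0$, follows from the Sasaki identity $\nabla_a\nabla_b\xi^c=-g_{ab}\xi^c+\delta^c{}_a\xi_b$ symmetrised in $(a,b)$ combined with $\Rho_{ab}=g_{ab}$, which yields $\nabla_{(b}\nabla_{c)}\xi^a+\Rho_{(bc)}\xi^a=\delta^a{}_{(b}\xi_{c)}$; this last expression is purely trace and hence has vanishing trace-free part. Using $\nabla_a\xi^a=0$, the resulting parallel adjoint tractor takes the explicit block form in the Levi-Civita splitting,
\[
\bbI^A{}_B=\begin{pmatrix}\nabla_b i^a & i^a\\ -i_b & 0\end{pmatrix},
\]
and analogously for $\bbJ,\bbK$. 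Proposition~\ref{ident_3Sas}(b) together with Definition~\ref{Def_Sasaki}(a) then yields $\bbI^2=\bbJ^2=\bbK^2=-\id_\mcT$.

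The principal new step is to verify $\bbI\bbJ=\bbK$, which combined with $\bbK^2=-\id_\mcT$ delivers the quaternionic relation $\bbI\bbJ\bbK=-\id_\mcT$ (the remaining cyclic identities following analogously). This reduces to comparing the four block entries: the upper-left $\nabla_b i^a\nabla_c j^b-i^a j_c$ equals $\nabla_c k^a$ by Proposition~\ref{ident_3Sas}(c); the upper-right $j^b\nabla_b i^a$ equals $k^a$ by Proposition~\ref{ident_3Sas}(a); the lower-left $-i_b\nabla_c j^b$ equals $-k_c$ by combining the skew-symmetry of $\nabla j$ with Proposition~\ref{ident_3Sas}(a); and the lower-right $-i_b j^b$ vanishes by the orthogonality condition~(c) of Definition~\ref{Def_3Sas}. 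The Hermitian property of $h$ with respect to each of $\bbI,\bbJ,\bbK$ then follows because each such parallel tractor endomorphism is $h$-skew (already established in the Sasaki case of \cite{GNW}), and for an $h$-skew complex structure $\bbA$ one has $h(\bbA X,\bbA Y)=-h(X,\bbA^2Y)=h(X,Y)$.

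I expect the main obstacle to be precisely this verification of $\bbI\bbJ=\bbK$: although ultimately a direct calculation, it requires assembling all of the identities of Proposition~\ref{ident_3Sas} together with the orthogonality relations of Definition~\ref{Def_3Sas}(c) in exactly the right pattern, and keeping careful track of the symmetries of the Killing $1$-forms when matching block entries. Everything else in the argument is either a direct invocation of the Sasaki/projective correspondence of \cite{GNW} or an algebraic consequence of having three parallel $h$-skew complex structures satisfying the quaternionic relation.
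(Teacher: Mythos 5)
Your proposal is correct and takes essentially the same route as the paper: cite \cite{GNW} (Theorem A there) for the parallel tractor metric $h$ and the parallel complex structures $\bbI=L^\mcA(i)$, $\bbJ=L^\mcA(j)$, $\bbK=L^\mcA(k)$ Hermitian for $h$, and then establish $\bbI\bbJ=\bbK$ by precisely the block computation \eqref{IJK} in the Levi-Civita scale, using Proposition \ref{ident_3Sas} and Definition \ref{Def_3Sas}(c). The one blemish is your claim that the Killing (affine symmetry) property of $\xi$ by itself yields condition (b) of Theorem \ref{adjoint_tractor_thm}: the conditions $W_{ab}{}^c{}_d\xi^d=0$ and $C_{abd}\xi^d=0$ do not follow from symmetry alone (the ``Moreover'' part of that theorem concerns the different contraction $W_{da}{}^c{}_b\xi^d$ and already presupposes (b)); they instead require the Sasaki identity of Definition \ref{Def_Sasaki}(b) together with $\Rho_{ab}=g_{ab}$ in the Einstein scale --- this does not damage your argument, however, since the appeal to \cite{GNW}, which is exactly the paper's route, already supplies the parallelism of $\bbI$, $\bbJ$, $\bbK$.
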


\begin{proof} Suppose $(M, g, (i, j, k))$ is a $3$-Sasaki manifold of signature $(4p-1, 4q)$. Then $g$ is known to be Einstein  (cf.\,Proposition \ref{3-Sask-Einstein}). Hence, Theorem A of \cite{GNW} implies that $g$ induces a parallel tractor metric $h \in \Gamma(S^2 \mcT^*)$ of signature $(4p, 4q)$ on $\mcT$, and, from $i, j$ and $k$, three parallel tractor complex structures $\bbI := L^\mcA(i), \bbJ = L^\mcA(j), \bbK = L^\mcA(k)$ with respect to which $h$ is Hermitian. It remains to show that $\bbI\bbJ\bbK=-\smash{\id_\mcT}$, equivalently that $\bbI\bbJ=\bbK$.
Formula \eqref{equation:splitting-operator-adjoint} for $L^\mcA$ and the fact that $i,j$, and $k$ are Killing fields of $g$ (cf.\ the proof of \cite[Theorem A]{GNW}) imply that
\begin{equation}\label{IJK_form}
	\bbI^A{}_B =
		 	\begin{pmatrix}
		 		\nabla_b i^a & i^a \\
		 		        -i_b & 0
			\end{pmatrix} ,\quad\quad
		\bbJ^A{}_B=
		 	\begin{pmatrix}
		 		\nabla_b j^a & j^a \\
		 		        -j_b & 0
			\end{pmatrix},\quad\textrm{and}\quad \bbK^A{}_B=
		 	\begin{pmatrix}
		 		\nabla_b k^a & k^a \\
		 		        -k_b & 0
			\end{pmatrix},
\end{equation}
in the scale of the Levi-Civita connection of the metric $g$.
Hence, by Definition \ref{Def_3Sas} and Proposition \ref{ident_3Sas} one has

\begin{equation}\label{IJK}
\bbI^A{}_B\bbJ^B{}_C=
		 	\begin{pmatrix}
		 		\nabla_b i^a\nabla_cj^b-i^aj_c & j^b\nabla_b i^a \\
		 		        -i_a\nabla_b j^a & -i_aj^a
			\end{pmatrix} =
		 	\begin{pmatrix}
		 		\nabla_c k^a & k^a \\
		 		        -k_c & 0
			\end{pmatrix}=\bbK^A{}_C. \qedhere
\end{equation}
\end{proof}

Before we investigate the converse of Theorem \ref{thmA}, that is, the geometric implications of a parallel hyperk\"ahler structure on the tractor bundle of a projective manifold, we recall in the following section one more geometric structure; it will appear in results in Section \ref{subsection:local-leaf-space}.

\subsection{Quaternionic contact structures}\label{sec_q_c}
For a manifold $N$ and a distribution $H\subset TN$ we denote by
$\mathcal L: H\times H\rightarrow TN/H$ the \emph{Levi bracket}, which
for two sections $\xi, \eta$ of $H$ is defined as the projection of
their Lie bracket $[\xi, \eta]$ to $TN/H$. Then for any $x\in N$, we
may view $\operatorname{gr}(T_xN)=H_x\oplus T_xN/H_x$ as a Lie algebra,
where the bracket of two elements of $H_x$ is given by $\mathcal
L_x$ and all other brackets are zero.

\newcommand{\tm}{{\widetilde{m}}}

\begin{definition} Suppose $p,q$ are non-negative integers
  and $p+q=\tm =m-1$.
Regard $\mathbb H^{\tilde m}$ as a (right) vector space over $\mathbb H$, and
  denote by $\langle\cdot,\cdot\rangle: \mathbb H^\tm \times \mathbb
  H^\tm \rightarrow \mathbb H $ the standard quaternionic Hermitian
  inner product of signature $(p,q)$, given by
$$\langle x,y\rangle := \sum_{i=1}^p \bar x_i y_i-\sum_{j=p+1}^\tm \bar
  x_jy_j \quad\textrm{ for } x,y\in \mathbb H^{\tilde{m}}.$$ Note that the
  imaginary part $\textrm{Im}(\langle x,y\rangle)\in
  \textrm{Im}(\mathbb H)$ of $\langle x,y\rangle$ is given by
  $\textrm{Im}(\langle x,y\rangle)=\frac{1}{2}(\langle
  x,y\rangle-\langle y,x\rangle)$.  The \emph{quaternionic Heisenberg
    algebra} of signature $(p,q)$ is the $2$-step nilpotent graded Lie
  algebra $$\mathfrak q:=\mathfrak q_{-1}\oplus \mathfrak
  q_{-2}:=\mathbb H^\tm\oplus \textrm{Im}(\mathbb H),$$ where the Lie
  bracket of two elements $(x,a), (y,b) \in \mathbb H^\tm \oplus
  \textrm{Im}(\mathbb H)$ is given by
$$[(x,a), (y,b)]:=(0, \textrm{Im}(\langle x,y\rangle))\in \mathbb H^\tm\oplus \textrm{Im}(\mathbb H).$$
\end{definition}

\begin{definition}\label{quat_contact}
Suppose $N$ is a manifold of dimension $4\tm+3$ ($\tm \geq 1$). Then a
\emph{quaternionic contact structure of signature $(p,q)$} ($p+q=\tm$)
is given by a distribution $H\subset TN$ of rank $4\tm$ such that such
that the graded vector bundle $\textrm{gr}(TN)=H\oplus TN/H$ is
locally trivial as a bundle of Lie algebras $(\textrm{gr}(TN),\mathcal
L)$,  with standard fibre isomorphic to the quaternionic Heisenberg Lie
algebra of signature $(p,q)$.
\end{definition}
Note that the condition on $\textrm{gr}(TN)=H\oplus TN/H$ in
Definition \ref{quat_contact} is equivalent to the existence of a
 quaternionic structure
$Q\subset
\textrm{End}(H) $ on $H$ and a conformal equivalence class $[g]$ of
$Q$-Hermitian (bundle) metrics of signature $(p,q)$ on $H$ (i.e. a
${\operatorname{CSp}(p,q) \times_{\bbZ_2} \Sp(1)}$-structure on $H$)
such that the Levi bracket $\mathcal L: \Wedge^2 H\rightarrow TN/H$
can be locally (for a local trivialisation of the rank-$3$ bundle
$TN/H$) written as $(g(I\,\cdot\,,\,\cdot\,), g(J\,\cdot\,,\,\cdot\,),
g(K\,\cdot\,,\,\cdot\,))$ for a local frame $(I, J, K)$ of $Q$ and a
metric $g\in [g]$.

A compact homogeneous model for quaternionic contact manifolds of
signature $(p,q)$ is the quaternionic projectivisation $$\mathcal
N^{4\tm+3}=\mathcal N^{4(p+q) +3}\subset \mathbb H P^{p+q+1}= \mathbb
H P^{\tm +1}$$ of the cone of null vectors in the standard
quaternionic Hermitian vector space $\mathbb H^{p+1, q+1}$. It
inherits from the quaternionic structure on $\mathbb H P^{\tm+1}$ an
$\Sp(p+1, q+1)$-homogeneous quaternionic contact structure $H\subset T\mathcal N^{4\tm +3}$ of
signature $(p,q)$, where
$H_x$ is the maximal quaternionic subspace of $T_x\mathcal N^{4\tm
  +3}\subset T_x\mathbb H P^{\tm+1}$ for any $x\in \mathcal N^{4\tm
  +3}$. In general, the following is known.
  \begin{theorem} \cite[Section 4.3.3]{CapSlovak} \label{thm_qc}
 There exists an equivalence of categories between quaternionic contact manifolds of
signature $(p,q)$ and regular normal
parabolic geometries modelled on $\Sp(p+1, q+1)/P\cong \mathcal N$,
where $P$ is the stabiliser of a null quaternionic line in $\mathbb H^{p+1, q+1}$.
\end{theorem}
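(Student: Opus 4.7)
The plan is to derive Theorem~\ref{thm_qc} as a specialisation of the general equivalence of categories for regular normal parabolic geometries developed in \cite{CapSlovak}. First I would pin down the algebraic model. Set $G := \Sp(p+1, q+1)$, and fix a null quaternionic line $\ell \subset \mathbb H^{p+1, q+1}$ with stabiliser $P \subset G$. Choosing a complementary null line gives a contact-type $|2|$-grading
\[
  \mathfrak g = \mathfrak g_{-2} \oplus \mathfrak g_{-1} \oplus \mathfrak g_0 \oplus \mathfrak g_1 \oplus \mathfrak g_2
\]
of $\mathfrak g = \mathfrak{sp}(p+1, q+1)$, with $\mathfrak p = \mathfrak g_0 \oplus \mathfrak g_1 \oplus \mathfrak g_2$. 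A direct computation identifies the negative part $\mathfrak g_{-} := \mathfrak g_{-2} \oplus \mathfrak g_{-1}$, as a graded Lie algebra, with the quaternionic Heisenberg algebra $\mathfrak q$ of signature $(p,q)$; the reductive Levi factor $G_0$ of $P$ acts on $\mathfrak g_{-1} \cong \mathbb H^{\tm}$ in the standard way and is isomorphic to $\operatorname{CSp}(p,q) \times_{\mathbb Z_2} \Sp(1)$.

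Next I would match the underlying structures. A regular parabolic geometry of type $(G, P)$ on a manifold $N$ induces a filtration on $TN$ whose associated graded $\textrm{gr}(TN)$ is pointwise modelled on $\mathfrak g_{-}$, with Levi bracket locally isomorphic to the Heisenberg bracket. Extracting the rank-$4\tm$ distribution $H \subset TN$ and the structure-group reduction to $G_0$ recovers exactly the data of Definition~\ref{quat_contact}; conversely, a quaternionic contact structure on $N$ supplies a $G_0$-reduction of the frame bundle of $\textrm{gr}(TN)$ compatible with its bundle-of-Lie-algebras structure, which is the same thing as a regular infinitesimal flag structure of type $(G,P)$. Functoriality on both sides (morphisms are local diffeomorphisms preserving the relevant data) is immediate from the definitions.

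Finally, to promote such an infinitesimal flag structure to a regular normal Cartan connection I would invoke the general prolongation and uniqueness theorem for parabolic geometries. Its essential input is the cohomological condition that $H^1(\mathfrak g_{-}, \mathfrak g)$ is concentrated in strictly positive homogeneities, so that a unique normal Cartan connection extends the underlying structure, together with the identification of $H^2(\mathfrak g_{-}, \mathfrak g)$ in positive homogeneities with the space of harmonic curvature components. The main obstacle of the whole argument sits here: one has to perform the Kostant-cohomology computation for $(\mathfrak{sp}(p+1, q+1), \mathfrak p)$ via the Hasse diagram of the simple root crossed out in $P$, verify the required vanishing statements in homogeneities $\leq 0$, and check that the $|2|$-grading is not of any of the exceptional low-rank types for which the equivalence breaks down (which forces the dimension restriction $\tm \geq 1$). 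Once this Kostant computation is in hand, the existence, uniqueness, and naturality statements of \cite{CapSlovak} yield the asserted equivalence of categories.
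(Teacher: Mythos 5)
The paper offers no independent proof of this statement---it is quoted directly from \cite[Section 4.3.3]{CapSlovak}---and your outline follows exactly the route taken there: identify the contact $|2|$-grading of $\mathfrak{g} = \mathfrak{sp}(p+1,q+1)$ determined by the stabiliser of a null quaternionic line, identify $\mathfrak{g}_{-}$ with the quaternionic Heisenberg algebra of signature $(p,q)$ and $G_0$ with $\operatorname{CSp}(p,q)\times_{\bbZ_2}\Sp(1)$ (its group of graded automorphisms, which is what makes a quaternionic contact structure the same thing as a regular infinitesimal flag structure of type $(G,P)$ with no extra reduction data), and then invoke the general prolongation/equivalence theorem together with Kostant's computation of the relevant Lie algebra cohomology. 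So the strategy is the right one.

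There is, however, one genuine error at the step you yourself identify as the crux: the hypothesis of the equivalence theorem (\cite[Theorem 3.1.14]{CapSlovak}) is that $H^1(\mathfrak{g}_{-},\mathfrak{g})$ is concentrated in \emph{non-positive} homogeneities, i.e.\ vanishes in all positive homogeneous degrees---not, as you write, that it is ``concentrated in strictly positive homogeneities'' with vanishing ``in homogeneities $\leq 0$.'' Carried out literally, your proposed verification would fail: by Kostant's theorem $H^1(\mathfrak{g}_{-},\mathfrak{g})$ contains a nonzero irreducible $G_0$-module for the crossed node, and for the quaternionic contact grading this component sits in negative homogeneity, so the vanishing you ask for in degrees $\leq 0$ is simply false, and by your stated criterion you would (wrongly) conclude that the prolongation does not apply. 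The correct check is vanishing in positive homogeneity, which does hold here; the only parabolic types violating it are projective and contact projective structures (first node of the $A$- resp.\ $C$-series), and the quaternionic contact parabolic (second node of the $C$-series) is not among them. A related minor slip: the restriction to $p+q\geq 1$ is not about ``exceptional low-rank types for which the equivalence breaks down''; it only ensures $\mathfrak{g}_{-1}\neq 0$ so that one has a genuine quaternionic contact geometry, and the special feature of dimension $7$ (the extra torsion component of the harmonic curvature) is a statement about $H^2$ in positive homogeneity and does not affect the equivalence of categories at all.
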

In particular, by Theorem \ref{thm_qc},  the so-called harmonic curvature of the corresponding normal parabolic
geometry is a (sharp) obstruction for the local equivalence of a quaternionic contact manifold to its homogeneous model $\Sp(p+1, q+1)/P$. For $\tm \geq 2$ (that is, $\dim N \geq 11$) the harmonic curvature comprises a single irreducible component called the \emph{curvature} of a quaternionic contact manifold, which can be viewed as a bundle map $\Wedge^2 H \to \End(H)$. For $\tm = 1$ ($\dim N = 7$), the harmonic curvature includes an additional irreducible component, the \emph{torsion} of a quaternionic contact manifold, which can be viewed as a bundle map $H \times TN / H \to TN / H$; in higher dimensions the torsion vanishes. See \cite[Section 4.3.3]{CapSlovak} for more details. Following the literature, we call a $7$-dimensional quaternionic contact structure with vanishing torsion \emph{integrable}.

\begin{remark}
 The (aforementioned) homogeneous model of quaternionic contact manifolds $\mathcal N\subset \mathbb H P^{\tm+1}$ is a real hypersurface in a quaternionic manifold and inherits its quaternionic contact structure from the ambient quaternionic structure. Recall that any generic real hypersurface in a complex manifold inherits a non-degenerate CR-structure of hypersurface type from the ambient complex structure. The analogous statement in the quaternionic setting, however, is not true: If $N$ is a generic real hypersurface of a quaternionic manifold $(M,Q)$, then the maximal $Q$-invariant subbundle $H\subset TN$ given by
\begin{equation}\label{wqc_structure}
    H_x := T_x N\cap I_x (T_xN)\cap  J_x (T_xN)\cap K_x(T_xN) \quad\quad \textrm{ for any } x\in N,
\end{equation}
where $(I_x, J_x, K_x)$ is a hypercomplex structure inducing $Q_x$, is in general merely what is called a \textit{weakly} quaternionic contact structure; see \cite{D2}.
\end{remark}

Quaternionic contact manifolds (in definite signature) were first
studied by Biquard in \cite{BiquardMetriques, BiquardQuaternionic}, where
they arise as geometric structures at infinity of
quaternionic K\"ahler manifolds.  In fact, for $\tm \geq 2$ Biquard showed
in \cite[Theorem D]{BiquardMetriques} that any real-analytic
quaternionic contact structure of definite signature is locally the
(conformal) infinity of a unique so-called asymptotically hyperbolic
quaternionic K\"ahler (AHQK) metric. Duchemin \cite[Theorem 1.4]{D1} showed that same holds for integrable structures with $\tm=1$. Moreover, \cite[Theorem C]{BiquardMetriques} and \cite[Section 5]{D1} (see also \cite[Proposition 4.5.5]{CapSlovak} for the indefinite case) imply that any quaternionic contact manifold $(N,H)$ of signature $(p,q)$ (assumed integrable if $\tm=p+q=1$)
induces on the total of the $S^2$-subbundle $S(Q)\rightarrow N$ (in
$Q\rightarrow N$), of complex structures on $H$ contained in $Q$, a non-degenerate
CR-structure of hypersurface type and (Hermitian) signature $(2p+1,
2q+1)$. The subbundle $S(Q)$ equipped with that
structure is called the \emph{CR-twistor space} of the quaternionic
contact manifold.

Recall also that the classical Fefferman construction \cite{Fefferman,
  BDS} associates to any non-degenerate CR-structure of hypersurface
type a conformal structure on the total space of a certain circle
bundle over the CR-manifold and that conformal structures that arise
via this construction are called \emph{Fefferman conformal
  structures}. Combining the constructions of Biquard and Fefferman
shows that any quaternionic contact manifold of signature $(p,q)$
induces, on the total space of a certain $S^3$-bundle over it, a
conformal structure of signature $(4p+3, 4q+3)$. We call conformal
structures that arise (locally) via this construction from quaternionic contact
manifolds \emph{Biquard--Fefferman conformal structures}.

\subsection{Projective structures with holonomy reduction to \texorpdfstring{$\Sp(p,q)$}{Sp(p, q)}}\label{Sec_Hol_Sp}
We are now ready to prove a result that includes a  converse to Theorem
\ref{thmA}---but also considerably more. We will restrict to dimension at least $11$ and will comment later in Remark \ref{ThmB_dim7} briefly on what happens in the lowest possible dimension, namely $7$, which is special.
\begin{theorem}\label{thmB}
Let $(M, \mbp)$ be a projective manifold of dimension at least $11$
equipped with a parallel hyperk\"ahler structure $(h,
(\bbI,\bbJ,\bbK))$ on its tractor bundle $(\mcT,
\nabla^{\mcT})$. Then, $M$ is of dimension $4m + 3$ and $h$ has
signature $(4p, 4q)$ for some $p, q, \in \{0, 1, \ldots\}$, where
$p+q-1 = m \geq 2$. The  vector fields $i :=
\Pi(\bbI), j := \Pi(\bbJ), k := \Pi(\bbK)$ on $M$ vanish nowhere, and
$M$ is stratified into a disjoint union of submanifolds
$$
    M = M_+ \cup M_0 \cup M_-,
$$
according to the strict sign of $\tau := h(X,X) \in \Gamma(\mcE(2))$, where $X \in \Gamma(\mcT(1))$ is the canonical (weighted) tractor defined in Section \ref{subsection:affine-projective-geometry}. Furthermore, the components $M_{\pm}$ and $M_0$ are each equipped with a geometry canonically determined by $(M, \mbp, h, (\bbI, \bbJ, \bbK))$ as follows.
\begin{enumerate}
	\item The submanifolds $M_{\pm}$ are open and (if non-empty) are respectively equipped with $3$-Sasaki structures $(g_{\pm}, (i, j, k))$ with Ricci curvature $\Ric_\pm = (4 m + 2) g_\pm$; $g_+$ has signature $(4p-1, 4q)$, and $g_-$ has signature $(4q-1, 4p)$. The metrics $g_{\pm}$ are compatible with the projective structure $\mbp$ in the sense that their respective Levi-Civita connections $\nabla^{\pm}$ satisfy $\nabla^{\pm} \in \mbp\vert_{M_{\pm}}$.
	\item The submanifold $M_0$ is (if non-empty) a smooth separating
          hypersurface and is equipped with an oriented (local) Biquard--Fefferman conformal structure $\mbc$ of signature $(4p-1,4q-1)$ as defined in Section \ref{sec_q_c},
               \end{enumerate}
If $h$ is definite, the stratification is trivial, that is,
if $h$ is $\pm$-definite then, respectively, $M = M_\pm$.
\end{theorem}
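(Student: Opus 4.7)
My plan is to reduce the theorem to the results already established in \cite{GNW} by viewing each of $\bbI,\bbJ,\bbK$ individually as a parallel Hermitian complex structure on $(\mcT,\nabla^\mcT)$, and then glue the three pictures together using the quaternionic relations.

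First I would handle the algebraic and stratification part. Since $(\mcT,\nabla^\mcT)$ admits a parallel hypercomplex structure its rank is divisible by $4$, forcing $n+2=4(m+1)$ and hence $\dim M=4m+3$; the assumption $\dim M\geq 11$ gives $m\geq 2$. The Hermitian signature of $h$ must be of the form $(4p,4q)$ with $p+q=m+1$. The stratification of $M$ by the sign of $\tau:=h(X,X)\in\Gamma(\mcE(2))$ depends only on the pair $(h,X)$, so it is the same stratification one obtains by applying Theorem B of \cite{GNW} to any one of the parallel Hermitian complex structures $(h,\bbI)$, $(h,\bbJ)$, $(h,\bbK)$ (each of which gives a projective holonomy reduction to $\SU(2p,2q)$).

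Next, on the open strata $M_\pm$, I would apply \cite[Theorem B]{GNW} to each of $\bbI$, $\bbJ$, $\bbK$ separately. This produces, on $M_\pm$, three Sasaki--Einstein structures $(g_\pm,i)$, $(g_\pm,j)$, $(g_\pm,k)$ sharing a common metric $g_\pm$ (the common metric arises because the metric in \cite{GNW} is built from $h$ and $\tau$ alone, independent of the complex structure). The Einstein constant is $\dim(M)-1=4m+2$ by Proposition \ref{3-Sask-Einstein}. It then remains to verify the quaternionic conditions (c) and (d) of Definition \ref{Def_3Sas}. Condition (c) would follow from the Hermitian and anti-self-adjoint properties together with $\bbI\bbJ=\bbK$: writing $i^a=\Pi(\bbI)^a$ etc.\ in terms of the tractor splitting determined by the Sasaki scale, the relation $\bbI\bbJ\bbK=-\id_\mcT$ together with Hermitianity of $h$ forces the mutual orthogonality of $i,j,k$. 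For condition (d), I would use that for parallel sections of the adjoint tractor bundle the map $\Pi$ intertwines the commutator bracket with the Lie bracket up to a sign, so that the algebra relation $[\bbI,\bbJ]=2\bbK$ (which is a direct consequence of $\bbI\bbJ\bbK=-\id$ combined with $\bbI^2=\bbJ^2=\bbK^2=-\id$) transfers to $[i,j]=-2k$ and its cyclic permutations.

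For the hypersurface $M_0$, each of $\bbI$, $\bbJ$, $\bbK$ individually determines an oriented Fefferman conformal structure on $M_0$ via \cite[Theorem B]{GNW}, and since the underlying conformal class is constructed purely from $h$ restricted to a complement of $X$ along $M_0$, the three Fefferman structures coincide as a single conformal class $\mbc$ of signature $(4p-1,4q-1)$. The parallel hyperk\"ahler structure on $\mcT$ induces a corresponding parallel quaternionic structure on the standard conformal tractor bundle of $(M_0,\mbc)$, so the conformal holonomy is reduced to $\Sp(p,q)\cdot\Sp(1)$; by the holonomy characterisation of Biquard--Fefferman conformal structures in \cite{AltQuaternionic} this identifies $\mbc$ (locally) as Biquard--Fefferman. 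Nowhere-vanishing of $i,j,k$ follows from the observation that if all three of $\Pi(\bbI), \Pi(\bbJ), \Pi(\bbK)$ vanished at $x$, then $\bbI X, \bbJ X, \bbK X$ would all lie in $\operatorname{span}(X_x)\subset \mcT_x$, contradicting the $\bbH$-linear independence forced by the quaternionic relations and the fact that $X_x\neq 0$. The final claim about definite $h$ is immediate since $\tau=h(X,X)$ has constant sign when $h$ is definite.

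The main obstacle I anticipate is the clean identification of $\mbc$ on $M_0$ as Biquard--Fefferman: one has to check carefully that the three individually-constructed Fefferman structures really coincide as conformal classes (not merely up to finite-dimensional group actions), and that the induced holonomy reduction on the conformal tractor bundle matches precisely the one singled out in \cite{AltQuaternionic}. The 3-Sasaki brackets on $M_\pm$ are, by comparison, a direct computation from the quaternionic relations and the explicit formula \eqref{equation:splitting-operator-adjoint}.
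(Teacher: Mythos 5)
Your route is essentially the paper's: a single application of the curved-orbit/stratification results of \cite{GNW} (the metric $g_\pm$ and the conformal class on $M_0$ are built from $h$ and $\tau$ alone, so there is no need to worry about the three ``copies'' agreeing), the quaternionic conditions (c), (d) of Definition \ref{Def_3Sas} extracted from $\bbI\bbJ=\bbK$ and its cyclic analogues, and the identification of $\mbc$ as Biquard--Fefferman via \cite[Theorem A]{AltQuaternionic} applied to the restriction of the parallel structure to the conformal tractor bundle $(\mcT\vert_{M_0},h\vert_{M_0},\nabla^\mcT\vert_{M_0})$. Your bracket lemma, that $\Pi$ sends the algebraic commutator of parallel adjoint tractors to minus the Lie bracket of their projecting parts, is correct and is exactly what the paper's matrix computation \eqref{IJK} in the Levi-Civita scale amounts to, so (c) and (d) go through.

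The one genuine gap is the nowhere-vanishing claim. Your argument only rules out the \emph{simultaneous} vanishing of $i,j,k$ at a point, whereas the theorem (and the later construction of the rank-$3$ distribution $D$) requires that \emph{each} of $i$, $j$, $k$ vanishes nowhere; this is the content of Proposition 5.23 of \cite{GNW}, which the paper cites, and it needs only one complex structure: if $i(x)=\Pi(\bbI)(x)=0$ then $\bbI X_x=\lambda X_x$ for some real $\lambda$ (since $\Pi(\bbI)$ is the image of $\bbI X$ under $\mcT(1)\to TM$), and applying $\bbI$ again gives $-X_x=\lambda^2 X_x$, contradicting $X_x\neq 0$. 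You should replace your three-fold argument by this (or by the citation). A smaller imprecision: on $M_0$ you invoke a conformal holonomy reduction to $\Sp(p,q)\times_{\bbZ_2}\Sp(1)$, but what you actually have --- and what \cite[Theorem A]{AltQuaternionic}, as used in the paper, takes as hypothesis --- is the stronger reduction to $\Sp(p,q)$ coming from the restricted parallel hyperk\"ahler (not merely quaternionic) structure; quoting only the weaker reduction would leave you needing an extra Armstrong-type argument (cf.\ the paper's final remark) to recover the hypercomplex case, which is unnecessary here.
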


In Theorem \ref{thmB}(a), and in some places below, we have suppressed, for readability, the notation $\,\cdot\,\vert_{M_{\pm}}$ specifying the restriction of the vector fields $i, j$, and $k$ to $M_{\pm}$.

\begin{proof}[Proof of Theorem \ref{thmB}]
  The existence of a hyperk\"ahler structure on $\mcT$ implies that the rank of $\mcT$ equals $4m+4$, implying that $\dim M =4m+3$, and that $h$ has signature $(4p, 4q)$ for $p+q-1 = m \geq 2$
  (cf.\ \eqref{group_intersection}).
By Proposition 5.23 of \cite{GNW} the vector fields $i, j$, and $k$ vanish nowhere on $M$, and by Theorem 5.1 of \cite{GNW}
the strict sign $\tau=h(X,X)$ induces a decomposition of $M$ as claimed.

\begin{enumerate}
\item Theorem B of \cite{GNW} gives that $M_\pm$ are both open and equipped with Einstein metrics $g_\pm$ with Ricci tensor $(4 m + 2) g_\pm$, whose Levi-Civita connection $\nabla^\pm$ lies in $\mbp\vert_{M_\pm}$. Moreover, we know that $i,j$, and $k$ are Killing fields for $g_\pm$ satisfying $(a)$ and $(b)$ of Definition \ref{Def_Sasaki}, so it remains to show that they also satisfy (c) and (d) of Definition \ref{Def_3Sas}. With respect to $\nabla=\nabla^\pm$ the tractor complex structures $\bbI$ and $\bbJ$ are of the form \eqref{IJK_form}, so the identity $\bbI\bbJ=\bbK$ and \eqref{IJK} imply that $i_aj^a=0$ and that the first identity in Proposition \ref{ident_3Sas}(a), which is equivalent to $[i,j]=-2k$, both hold. Now, since \eqref{quaternionic} implies that $\bbI\bbK=-\bbJ$ and $\bbJ\bbK=\bbI$, one deduces similarly that $i_ak^a=j_ak^a=0$, and that $[j,k]=-2i$ and $[k,i]=-2j$.
\item Theorem B of \cite{GNW} also gives that $M_0$ is a smooth separating hypersurface equipped with a conformal structure $\mbc$ of signature $(4p-1,4q-1)$, and the proof thereof identifies $(\mcT\vert_{M_0}, h\vert_{M_0}, \nabla^\mcT\vert_{M_0})$ with the conformal tractor bundle of $(M_0, \mbc)$ and its normal (conformal) tractor connection. Therefore, the parallel hyperk\"ahler structure $(h, (\bbI, \bbJ, \bbK))$ on the projective tractor bundle $\mcT$ induces a parallel hyperk\"ahler structure on $(\mcT\vert_{M_0}, h\vert_{M_0}, \nabla^\mcT\vert_{M_0})$, and the claim follows from the characterisation of Biquard--Fefferman conformal structures in \cite[Theorem A]{AltQuaternionic}. \qedhere
\end{enumerate}
\end{proof}

\begin{theorem}\label{thmC}
Assume the setting of Theorem \ref{thmB} and that $M_0 \neq \emptyset$ (so $p, q > 0$). Then the projective manifold-with-boundary $(M \setminus M_\mp, \mbp\vert_{M \setminus M_\mp})$ is respectively a projective compactification (of order $2$) of the $3$-Sasaki manifold $(M_\pm, g_\pm, (i, j, k))$, with projective infinity the Biquard--Fefferman conformal structure $(M_0, \pm {\mbc})$.
\end{theorem}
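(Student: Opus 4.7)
The plan is to obtain Theorem \ref{thmC} as a refinement of the corresponding statement for Sasaki--Einstein structures, namely Theorem C of \cite{GNW}. The key observation is that a parallel tractor hyperk\"ahler structure $(h, (\bbI, \bbJ, \bbK))$ on $(\mcT, \nabla^\mcT)$ restricts, by selecting any one of the three parallel tractor complex structures (say $\bbI$), to a parallel tractor Hermitian structure $(h, \bbI)$, that is, to a projective holonomy reduction of the standard tractor bundle to $\SU(2p, 2q) \supset \Sp(p, q)$. The notion of projective compactification of order $2$, being a statement about the asymptotic behaviour of an Einstein metric relative to the projective class and the defining function on its boundary, depends only on this reduced data.

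First I would invoke Theorem C of \cite{GNW} for the $\SU(2p, 2q)$-reduction induced by $(h, \bbI)$. This yields that $(M \setminus M_\mp, \mbp\vert_{M \setminus M_\mp})$ is a projective compactification of order $2$ of the Sasaki--Einstein manifold on $M_\pm$ arising from the $\SU(2p,2q)$-reduction, whose projective infinity is the Fefferman conformal structure on $M_0$ induced from $(h\vert_{M_0}, \bbI\vert_{M_0}, \nabla^\mcT\vert_{M_0})$. Next I would verify that the Sasaki--Einstein metric on $M_\pm$ so produced coincides with the $3$-Sasaki metric $g_\pm$ of Theorem \ref{thmB}(a). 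This is immediate since both metrics arise from the same scale, namely the nowhere-vanishing section of $\mcE(2)$ on $M_\pm$ obtained from $\tau = h(X, X)$; equivalently, both are the unique Einstein metric in $\mbp\vert_{M_\pm}$ with Einstein constant $4m + 2$ whose Levi-Civita connection corresponds to the scale $|\tau|^{-1/2}$. Consequently, the projective compactification statement transfers verbatim to $(M_\pm, g_\pm)$.

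Finally, the boundary conformal structure inherited on $M_0$ from the tractor data depends only on $(h\vert_{M_0}, \nabla^\mcT\vert_{M_0})$ and not on the auxiliary choice of $\bbI$; by Theorem \ref{thmB}(b) this conformal structure is the Biquard--Fefferman conformal structure $\pm \mbc$, the sign corresponding to the choice of $M_\pm$ as the interior and hence to the outward normal at $M_0$. The main potential obstacle is ensuring that the identifications implicit in the reduction $\Sp(p,q) \subset \SU(2p,2q)$ respect both the scale $\tau$ and the choice of defining function used in \cite{GNW}, so that the order-$2$ asymptotic form and the identification of projective infinity pass unchanged to the present setting; this, however, follows directly from the fact that $\tau$, $X$, and $h$, which control all of these structures, are common to the $\SU(2p,2q)$- and $\Sp(p,q)$-reductions.
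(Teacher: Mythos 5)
Your proposal is correct, and it is close in spirit to the paper's argument, but it takes a slightly different route through the earlier paper \cite{GNW}. The paper's proof is a one-liner: the compactification statement needs only the parallel tractor metric $h$ and the weighted function $\tau = h(X,X)$, so it cites Theorem \ref{thmB} together with \cite[Theorem 5.1]{GNW} (the curved-orbit/compactification result attached to a parallel tractor metric, which in turn rests on \cite{CGcompact,CGHjlms,CGH}); the $3$-Sasaki and Biquard--Fefferman refinements of the interior metrics and the boundary conformal structure are exactly what Theorem \ref{thmB} supplies. You instead pass through the intermediate reduction $\Sp(p,q)\subset\SU(2p,2q)$ and invoke \cite[Theorem C]{GNW}, then check that the Sasaki--Einstein metric produced there agrees with $g_\pm$ (both come from the same scale determined by $\tau$, indeed from the same application of \cite[Theorem B]{GNW} used in the proof of Theorem \ref{thmB}(a)) and that the boundary conformal structure depends only on $h\vert_{M_0}$ and $\nabla^\mcT\vert_{M_0}$, hence is $\pm\mbc$. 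This is valid; the only point you should state explicitly is that $(h,\bbI)$ by itself is a reduction to $\U(2p,2q)$, and the $\SU(2p,2q)$ hypothesis of \cite[Theorem C]{GNW} needs the parallel complex volume form, which is available precisely because the full holonomy lies in $\Sp(p,q)\subset\SU(2p,2q)$. The trade-off: the paper's route is more economical, since the order-$2$ projective compactification and the identification of projective infinity are already metric-level ($\SO$-reduction) facts, while your route carries the auxiliary complex structure $\bbI$ along and therefore requires the (correctly handled) compatibility checks relating the $\SU$- and $\Sp$-level data.
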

\begin{proof}
This follows immediately from Theorem \ref{thmB} and \cite[Theorem 5.1]{GNW}, which in turn use \cite{CGcompact, CGHjlms, CGH}.
\end{proof}

\begin{remark}\label{ThmB_dim7}
Suppose now $(M, \mbp)$ is a projective manifold of dimension $7$
equipped with a parallel hyperk\"ahler structure $(h, (\bbI,\bbJ,\bbK))$ on its tractor bundle $(\mcT, \nabla^{\mcT})$. Then Theorem \ref{thmB} also holds, except that statement (b) needs to be modified, since the lowest dimension of a Biquard--Fefferman conformal structure (as it fibres over a quaternionic contact manifold) is $10$. More specifically, $h$ is in this case either of definite signature $(8,0)$ respectively $(0,8)$, or signature $(4,4)$. In the definite cases, Theorem \ref{thmB} holds without change: $M_0=\emptyset$ and $M=M_\pm$ admits a $3$-Sasaki structure of definite signature $(7,0)$ respectively $(0,7)$. In the indefinite case, $M_\pm$ admit $3$-Sasaki structures of signature $(3,4)$ and $M_0$ (if non-empty) is a real separating hypersurface, which is canonically equipped with an oriented conformal structure of signature $(3,3)$ with a holonomy reduction of its conformal tractor bundle to $\Sp(1,1)\leq\SO(4,4)$. We will say later, in Remark \ref{ThmD_dim7}, a bit more about the geometric interpretation of this reduction. With this modification, Theorem \ref{thmC} also holds.
\end{remark}

In the following sections will see that the vector fields $i,j$, and $k$ in the setting of Theorem \ref{thmB} span an integrable distribution on $M$ and will investigate the geometric structure on its local leaf space.

\subsection{Projective structures with holonomy reduction to \texorpdfstring{$\SL(m+1, \bbH)$}{SL(m + 1, H)}}\label{Sec_Hol_SL}
We first study the geometric implications of a holonomy reduction of an oriented projective manifold of dimension $4m+3$ to
\[
    \SL(m + 1, \bbH) = \GL(m + 1, \bbH) \cap \SL(4 m + 4, \bbR),
\]
that is, the tractor bundle $(\mcT, \nabla^\mcT)$ admits a parallel
hypercomplex structure
$( \bbI, \bbJ,\bbK )$ such that
$\vol \in \Gamma(\Wedge^{4m+4}\mcT^*)$ and $( \bbI,
\bbJ,\bbK )$ induce the same orientation on $\mcT$. We focus again on the case $m\geq 2$, but comment on the case $m=1$ in Remark \ref{leaf space_dim 7}.

\begin{proposition}\label{ijk-adapted-scales}
Suppose $(M,\mbp)$ is a connected oriented projective manifold of
dimension $4m+3\geq7$ equipped with a parallel hypercomplex structure $(
\bbI, \bbJ,\bbK)$ on its tractor bundle $(\mcT, \nabla^\mcT)$ such
that $\vol \in \Gamma(\Wedge^{4m+4}\mcT^*)$ and $(\bbI, \bbJ,\bbK)$
induce the same orientation on $\mcT$.  Let
$i := \Pi(\bbI)$, $j := \Pi(\bbJ)$, $k := \Pi(\bbK)$ denote the corresponding nowhere-vanishing vector fields on $M$. Then the following hold:
\begin{itemize}
\item[(a)] The distribution $D := \operatorname{span}\{i,j,k\} \subset TM$ has rank $3$ and is integrable.
 \item[(b)] Locally around any point in $M$ there exists a scale $\nabla\in\mbp$ such that $$\nabla_ai^a=\nabla_aj^a=\nabla_ak^a=0.$$ We refer to such a scale as an $(i,j,k)$-adapted scale. In view of \eqref{projective_change} the freedom in the choice of an $(i,j,k)$-adapted scale in $\mbp$ is addition of an exact $1$-form $\Upsilon_a$ satisfying $\Upsilon_ai^a=\Upsilon_aj^a=\Upsilon_ak^a=0$.
\item[(c)] The vector fields $i,j, k$ are affine symmetries of any $(i,j,k)$-adapted scale and so, in particular, are projective symmetries of $(M, \mbp)$.

\end{itemize}
\end{proposition}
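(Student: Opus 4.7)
The plan is to prove (a), (b), (c) in sequence by expanding the tractor equations $\bbI \bbJ = \bbK = -\bbJ \bbI$ (and cyclic analogues) with respect to a chosen scale $\nabla \in \mbp$ via Notation \ref{not_compl_str}, then extracting the appropriate block entries of the resulting matrix identities.

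For (a), I first establish pointwise linear independence of $i,j,k$. The adjoint tractor $\mathbb{A} := \alpha \bbI + \beta \bbJ + \gamma \bbK$ satisfies $\mathbb{A}^2 = -(\alpha^2 + \beta^2 + \gamma^2)\, \smash{\id_\mcT}$ for any $(\alpha, \beta, \gamma) \in \bbR^3$, the cross terms vanishing because $\bbI\bbJ + \bbJ\bbI = 0$ and cyclic. Since $i^a, j^a, k^a$ are the top-slot projections $Z_A{}^a X^B \bbI^A{}_B$, etc., the vanishing of $\alpha i + \beta j + \gamma k$ at a point $x$ implies that $\mathbb{A}_x X_x$ has zero top slot and hence equals $\lambda X_x$ for some $\lambda \in \bbR$; iterating forces $\lambda^2 = -(\alpha^2 + \beta^2 + \gamma^2)$, so $\alpha = \beta = \gamma = 0$. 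For integrability, computing the top-right block of $\bbI\bbJ = \bbK$ via Notation \ref{not_compl_str} produces $j^b \nabla_b i^a - \varphi(i) j^a - \varphi(j) i^a = k^a$; subtracting the corresponding identity coming from $\bbJ\bbI = -\bbK$ yields $[i, j] = -2 k$, and cyclically $[j, k] = -2 i$, $[k, i] = -2 j$. Thus $D$ is involutive of constant rank $3$, and Frobenius applies.

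For (b), tracing \eqref{projective_change} shows that a change of scale $\nabla \to \wh\nabla$ by an exact $\Upsilon = df$ changes divergences according to $\wh\nabla_a \xi^a - \nabla_a \xi^a = (n + 2)\, \xi(f)$ for any $\xi \in \Gamma(TM)$. Starting from any local scale $\nabla \in \mbp$, the existence of an $(i, j, k)$-adapted scale therefore reduces to solving the overdetermined first-order system
\begin{equation*}
	i(f) = -\varphi(i), \qquad j(f) = -\varphi(j), \qquad k(f) = -\varphi(k) .
\end{equation*}
The Frobenius compatibility condition, using $[i, j] = -2k$ and cyclic, reduces to $j(\varphi(i)) - i(\varphi(j)) = 2\, \varphi(k)$ and cyclic analogues. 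I derive exactly these identities by computing the bottom-right entry of both $\bbI\bbJ = \bbK$ and $\bbJ\bbI = -\bbK$ via Notation \ref{not_compl_str} and subtracting: the cross terms involving $\Rho$ cancel because $\Rho$ is symmetric in any scale (Definition \ref{def_scale}). Frobenius then yields $f$ locally and hence the adapted scale. For the freedom assertion, the exact $1$-form $\Upsilon = dg$ relating two adapted scales must satisfy $(n + 2)\Upsilon_a \xi^a = 0$ for each $\xi \in \{i, j, k\}$, i.e., $\Upsilon$ annihilates $D$.

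For (c), I use the standard characterisation that a projective symmetry $\xi$ of $(M, \mbp)$ is precisely one for which $(\mathcal{L}_\xi \nabla)^c{}_{ab} = \delta^c{}_a \Psi_b + \delta^c{}_b \Psi_a$ for some $\Psi \in \Gamma(T^*M)$ (obtained by differentiating \eqref{projective_change}), and that $\xi$ is an affine symmetry of $\nabla$ precisely when $\Psi = 0$. By Theorem \ref{adjoint_tractor_thm}, the parallel adjoint tractors $\bbI, \bbJ, \bbK$ imply that $i, j, k$ are projective symmetries. Using the identity $(\mathcal{L}_\xi \nabla)^c{}_{ab} = \nabla_a \nabla_b \xi^c + R_{da}{}^c{}_b \xi^d$ and tracing over $c = a$ yields
\begin{equation*}
	(n + 2)\, \Psi_b = \nabla_b(\nabla_a \xi^a) + (\Ric_{bd} - \Ric_{db})\, \xi^d ;
\end{equation*}
in a scale $\Ric$ is symmetric (Definition \ref{def_scale}), so $(n+2) \Psi_b = \nabla_b(\nabla_a \xi^a)$. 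In an $(i, j, k)$-adapted scale this vanishes for each $\xi \in \{i, j, k\}$, so $\mathcal{L}_\xi \nabla = 0$, which in particular gives that $i, j, k$ are projective symmetries of $(M, \mbp)$. The hardest step will be the derivation of the Frobenius compatibility in (b) from the parallel hypercomplex tractor identities; the remaining parts reduce to routine tractor-calculus bookkeeping once the appropriate component of each identity has been identified.
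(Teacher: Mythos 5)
Parts (a) and (b) of your argument are essentially correct. For (a), your pointwise argument---$(\alpha\bbI+\beta\bbJ+\gamma\bbK)^2=-(\alpha^2+\beta^2+\gamma^2)\,\smash{\id_\mcT}$ applied to the canonical tractor $X$, forcing $\lambda^2=-(\alpha^2+\beta^2+\gamma^2)$ and hence $\alpha=\beta=\gamma=0$---is a clean, self-contained alternative to the paper's appeal to the curved-orbit/BGG zero-locus theory of \cite{CGH} applied to $\bbI\wedge\bbJ\wedge\bbK$; the integrability computation is the same as the paper's. For (b), you solve $i(f)=-\varphi(i)$, $j(f)=-\varphi(j)$, $k(f)=-\varphi(k)$ in one stroke from an arbitrary scale, whereas the paper first invokes existence of an $i$-adapted scale (citing \cite{GNW}) and then corrects within that class; your compatibility identities $j(\varphi(i))-i(\varphi(j))=2\varphi(k)$ and cyclic do follow from the bottom-right slots of $\bbI\bbJ=\bbK$ and $\bbJ\bbI=-\bbK$, with the $\Rho$-terms cancelling by symmetry of $\Rho$ in a scale, after which Frobenius (via the graph distribution on $M\times\bbR$, exactly as in the paper) produces $f$. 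This is a legitimate mild streamlining.

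Part (c), however, has a genuine gap. You claim that ``by Theorem \ref{adjoint_tractor_thm} the parallel adjoint tractors imply that $i,j,k$ are projective symmetries.'' That is not what the theorem says: parallelism of $\bbI$ only gives that $i$ solves $\mcD^{\mcA}(i)=0$ together with the \emph{last-slot} conditions $W_{ab}{}^c{}_d\,i^d=0$ and $C_{abd}\,i^d=0$, and by the final sentence of Theorem \ref{adjoint_tractor_thm} such a solution is a projective symmetry if and only if the \emph{first-slot} condition $W_{d(a}{}^c{}_{b)}i^d=0$ holds (with $W_{da}{}^c{}_b\,i^d=0$ needed for the affine statement). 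Indeed, for a solution of the BGG equation one finds $(\mcL_i\nabla)_{ab}{}^c=W_{d(a}{}^c{}_{b)}i^d+(\text{pure trace terms})$, so your trace argument can only eliminate the trace part and cannot see the Weyl term. Establishing the first-slot conditions is precisely the substance of the paper's proof of (c): one uses $R_{[AB}{}^C{}_{D]}=0$ (from $W_{[ab}{}^c{}_{d]}=0$, $C_{[abd]}=0$) together with the fact that $R^{\mcT}$ is $\mathfrak{sl}(\mcT,\bbH)$-valued, applies Lemma 6.5 of \cite{PPS} to get $\bbI\bbI R=\bbJ\bbJ R=\bbK\bbK R=R$, and contracts with $X$ to conclude $i^aW_{ab}{}^c{}_d=j^aW_{ab}{}^c{}_d=k^aW_{ab}{}^c{}_d=0$ and likewise for $C$---a step that genuinely uses all three complex structures. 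Without it your proof of (c) does not go through; with it, your trace computation is a reasonable substitute for the paper's citation of \cite{GNW} for passing from the curvature conditions to affine symmetry in an $(i,j,k)$-adapted scale.
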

\begin{proof}
\leavevmode
\begin{enumerate}
\item[(a)]
To see that $D$ has rank $3$, consider the compact flat homogeneous model of oriented projective manifolds of dimension $4m+3$ given by the sphere $$S^{4m+3}\cong \SL(4m+4,\bbR)/P,$$ viewed as the ray projectivisation of
$\R^{4m+4}$ and equipped with its standard flat projective structure induced by the round metric, where $P$ is the stabiliser of a ray. Since $\SL(m+1,\bbH)$ acts transitively on $S^{4m+3}$ and preserves the parallel tractor $\bbI\wedge \bbJ\wedge \bbK\in\Gamma(\Wedge^3\mcA)$, it follows from \cite{CGH} that the curved orbit decomposition corresponding to the parallel tractor $\bbI\wedge \bbJ\wedge \bbK$ is trivial and that $i\wedge j\wedge k$ is nowhere vanishing,
since the zero set of any normal solution of a BGG operator (corresponding to a parallel tractor) is a union of curved orbits (see in particular, \cite[Section 2.7]{CGH}). Hence, $D$ has rank $3$.
Now let us proof that $D$ is integrable. We fix a scale $\nabla\in\mbp$ and use Notation \ref{not_compl_str}. Then $\bbI \bbJ=\bbK$ implies that
\begin{equation}\label{nabla_ij_id}
j^b\nabla_b i^a-j^a\phi(i)-i^a\phi(j)=k^a \quad\textrm{ and }\quad i^b\nabla_b j^a-i^a\phi(j)-j^a\phi(i)=-k^a,
\end{equation}
after which torsion-freeness of $\nabla$ implies that $[i,j]=\nabla_i j-\nabla_j i=-2k$. Similarly, one deduces from
$\bbI\bbK=-\bbJ$ and $\bbK\bbJ=-\bbI$ that $[k,i]=-2j$ and
$[j,k]=-2i$. Therefore, the distribution $D := \operatorname{span}\{i, j, k\}$
is involutive and hence integrable.
\item[(b)] In Proposition 5.24 of \cite{GNW} we have seen that locally
  around any point in $M$ there exists an $i$-adapted scale, that is,
  a scale $\nabla\in\mbp$ with $\nabla_ai^a=0$ (equivalently,
  $\varphi(i)=0$). According to \eqref{projective_change} the freedom in
  the choice of an $i$-adapted scale is an exact
  $1$-form $\Upsilon_a$
  satisfying $\Upsilon_ai^a=0$. Hence, we need to show that an $i$-adapted
  scale can be modified, according to \eqref{projective_change}, by
  such a $1$-form to a scale that is also adapted to $j$ and $k$. So
  let us fix a local $i$-adapted scale $\nabla$.

First, $\bbI^2=-\smash{\id}_\mcT$ implies that (in the $i$-adapted scale $\nabla$)
\begin{equation}\label{i-adapt_id1}
\Rho_{ab} i^ai^b=1 \quad\textrm{ and }\quad i^a\nabla_ai^b=0.
\end{equation}\label{i-adapt_id2}
Moreover, $\bbI \bbJ=\bbK$ and
$\bbI \bbK=-\bbJ$ imply that
\begin{equation}\label{i-adapt_id2}
\Rho_{ab} i^aj^b=\Rho_{ba} i^aj^b=\varphi(k)\quad\textrm{ and }\quad  \Rho_{ab} i^ak^b= \Rho_{ba} i^ak^b=-\varphi(j),
\end{equation}
and $\bbJ \bbI=-\bbK$,
$\bbK \bbI=\bbJ$ and $\bbJ\bbK=-\bbK\bbJ=\bbI$ that
\begin{equation}\label{i-adapt_id3}
i^a\nabla_a\varphi(j)=-2\varphi(k), \quad i^a\nabla_a\varphi(k)=2\varphi(j), \quad \textrm{and}\quad k^a\nabla_a\varphi(j)=j^a\nabla_a\varphi(k).
\end{equation}
Suppose $\widehat\nabla\in\mbp$ is another $i$-adapted scale which differs from $\nabla$ by $\Upsilon_a$ according to \eqref{projective_change}. Then the identity \eqref{change_rho} shows that:
\begin{equation}
\wh \Rho_{ab}i^b=\Rho_{ab}i^b-(\nabla_a\Upsilon_b)i^b+\Upsilon_a\Upsilon_bi^b=\Rho_{ab}i^b+\Upsilon_b \nabla_ai^b.
\end{equation}
By \eqref{nabla_ij_id}, \eqref{i-adapt_id1} and \eqref{i-adapt_id2}, this implies:
\begin{equation}
\wh \Rho_{ab}i^b j^a=\Rho_{ab}i^bj^a+\Upsilon_b  j^a\nabla_ai^b=\varphi(k)+ \Upsilon_b k^b.
\end{equation}
Analogously, one obtains:
\begin{equation}
\wh \Rho_{ab}i^b k^a=\Rho_{ab}i^bk^a+\Upsilon_b  k^a\nabla_ai^b=-\varphi(j)-\Upsilon_b j^b.
\end{equation}
Therefore, the proof reduces to the question whether locally around any point there exists a smooth function $f$ such that:
\begin{equation}
i\cdot f=0,\quad\quad j\cdot f=-\varphi(j), \quad\quad k\cdot f=-\varphi(k).
\end{equation}
By the Implicit Function Theorem and the Frobenius Theorem, it suffices to show that the rank-$3$ distribution on $M\times\R$ spanned by $\xi:=i$, $\eta:=j+\varphi(j)\frac{\partial}{\partial t}$, and
$\zeta:=k+\varphi(k)\frac{\partial}{\partial t}$ is involutive (here, $t$ denotes the standard coordinate on $\R$). The commutator relations of $i,j$, and $k$ in the proof of (a) and \eqref{i-adapt_id3} yield:
\begin{equation*}
[ \xi, \eta]=-2\zeta\quad\quad
[ \xi,\zeta]= 2\eta,\quad\quad
[\eta,\zeta]=-2\xi,
\end{equation*}
which completes the proof.
\item[(c)] Since, by assumption, $\mcT$ is equipped with a
  $\nabla^\mcT$-parallel hypercomplex structure $(\bbI, \bbJ, \bbK)$,
  the tractor curvature $R_{ab}{}^C{}_D$ is a section of
  $\Wedge^2T^*M\otimes \mathfrak{sl}(\mcT, \bbH)$. Now consider
\[
    R_{AB}{}^C{}_D:=R_{ab}{}^C{}_D Z_{A}{}^aZ_{B}{}^b\in \Gamma(\Wedge^2\mcT^*\otimes \mathfrak{sl}(\mcT, \bbH)) .
\]
With respect to any scale $\nabla \in \mbp$, $R_{AB}{}^C{}_D$ is, per \eqref{tractor_curvature},
$$ R_{AB}{}^C{}_D=W_{ab}{}^c{}_d Z_{A}{}^aZ_{B}{}^b W^C{}_c Z_{D}{}^d-C_{abd} Z_{A}{}^aZ_{B}{}^bX^CZ_{D}{}^d.$$
Proposition \ref{projective_tensors} gives that $W_{[ab}{}^c{}_{d]}=0$ and $C_{[abd]}=0$, and so $R_{[AB}{}^C{}_{D]}=0$. By (the proof of) Lemma 6.5 of \cite{PPS} this implies
\begin{equation}
\bbI^{E}{}_{A}\, \bbI^{F}{}_{B}R_{EF}{}^C{}_D=\bbJ^{E}{}_{A}\, \bbJ^{F}{}_{B}R_{EF}{}^C{}_D=\bbK^{E}{}_{A}\, \bbK^{F}{}_{B}R_{EF}{}^C{}_D=R_{AB}{}^C{}_D.
\end{equation}
Hence $X^A\bbI^{E}{}_{A}R_{EF}{}^C{}_D=X^A\bbJ^{E}{}_{A}R_{EF}{}^C{}_D=X^A\bbK^{E}{}_{A}R_{EF}{}^C{}_D=0$, since $X^AR_{AB}{}^C{}_D=0$. With respect to any scale $\nabla\in\mbp$ this says
that $i^a W_{ab}{}^c{}_{d}=j^a W_{ab}{}^c{}_{d}=k^a W_{ab}{}^c{}_{d}=0$ and $i^a C_{abd}=j^a C_{abd}=k^a C_{abd}=0$. Thus, the claim follows from Proposition 5.27 of \cite{GNW} and Theorem \ref{adjoint_tractor_thm}. \qedhere
\end{enumerate}
\end{proof}

Using Theorem \ref{adjoint_tractor_thm} straightforwardly yields the following identities:

\begin{lemma}\label{identities_IJK=-Id}
Assume the setting of Proposition \ref{ijk-adapted-scales}, and let $\nabla\in\mbp$ be a local $(i,j,k)$-adapted scale. Then $\bbI^2=\bbJ^2=\bbK^2=-\smash{\id}$ and $\bbI\bbJ=-\bbJ\bbI=\bbK$ imply:
\begin{itemize}
\item[(a)] $i^a\nabla_a i^b=0,\quad \Rho_{cd} i^c i^d=1,\quad\Rho_{cd} i^d \nabla_b i^c=0,\quad \nabla_c i^a \nabla_b i^c - i^a \Rho_{bc} i^c=-\delta^a{}_b,$
\\and similarly for $j$ and $k$;
\item[(b)]  $\Rho_{ab} i^a j^b=0,\quad \Rho_{ab} i^a k^b=0, \textrm{ and }\,\Rho_{ab} j^a k^b=0;$
\item[(c)] $i^a\nabla_a j^b=-j^a\nabla_a i^b=k^b,\textrm{ and }\, \Rho_{bc}i^c\nabla_a j^b=-\Rho_{bc}j^c\nabla_a i^b=\Rho_{bc} k^c;$
\item[(d)] $(\nabla_b i^c) \nabla_a j^b-\Rho_{ad} j^d i^c=-(\nabla_b j^c ) \nabla_a i^b+\Rho_{ad} i^d j^c=\nabla_a k^c,$
\end{itemize}
and (c) and (d) also for their cyclic permutations of $i,j$, and $k$.
Moreover, $\nabla^\mcT \bbI=\nabla^\mcT \bbJ=\nabla^\mcT \bbK=0$ implies
\begin{equation}\label{Q_parallel}
\nabla_a \nabla_b i^c=-\Rho_{ab}i^c+\Rho_{bd} i^d\delta^c{}_a,
\end{equation}
and similarly for $j$ and $k$.
\end{lemma}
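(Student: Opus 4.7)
The proof is a direct calculation entirely internal to the local $(i,j,k)$-adapted scale $\nabla$. The key simplification is that, by Proposition \ref{ijk-adapted-scales}(b), $\varphi(i) = \varphi(j) = \varphi(k) = 0$ in such a scale, so Notation \ref{not_compl_str} collapses the parallel adjoint tractors to
\[
    \bbI^A{}_B = \begin{pmatrix} \nabla_b i^a & i^a \\ -\Rho_{bc} i^c & 0 \end{pmatrix},
    \qquad
    \bbJ^A{}_B = \begin{pmatrix} \nabla_b j^a & j^a \\ -\Rho_{bc} j^c & 0 \end{pmatrix},
    \qquad
    \bbK^A{}_B = \begin{pmatrix} \nabla_b k^a & k^a \\ -\Rho_{bc} k^c & 0 \end{pmatrix}.
\]

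The plan for parts (a)--(d) is to compute the block products $\bbI^2$, $\bbJ^2$, $\bbK^2$, $\bbI\bbJ$, $\bbJ\bbK$, $\bbK\bbI$ and their reverses, and to equate each with the right-hand side dictated by the hypercomplex relations (each equal to $-\smash{\id_\mcT}$, or to one of $\pm\bbI, \pm\bbJ, \pm\bbK$, the latter being consequences of $\bbI\bbJ\bbK = -\smash{\id_\mcT}$ together with the three squared relations). From $\bbI^2 = -\smash{\id_\mcT}$ the top-right, bottom-right, bottom-left, and top-left blocks respectively furnish the four identities in (a); the analogous identities for $j$ and $k$ follow at once from $\bbJ^2 = \bbK^2 = -\smash{\id_\mcT}$. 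From $\bbI\bbJ$ and its cyclic and reverse partners one extracts (b) as the bottom-right entries, (c) as the top-right entries, and (d) as the top-left entries, with the bottom-left entries reducing to relations already contained in (a) and (b) once the earlier identities are used.

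For the Hessian relation \eqref{Q_parallel}, I would decompose
\[
    \bbI^B{}_C
        = \nabla_c i^b\, W^B{}_b Z_C{}^c + i^b\, W^B{}_b Y_C - \Rho_{cd} i^d\, X^B Z_C{}^c
\]
and apply $\nabla^\mcT_a$ using the formulas
\[
    \nabla^\mcT_a X^B = W^B{}_a, \quad \nabla^\mcT_a W^B{}_b = -\Rho_{ab} X^B, \quad \nabla^\mcT_a Y_C = \Rho_{ac} Z_C{}^c, \quad \nabla^\mcT_a Z_C{}^c = -\delta^c{}_a Y_C,
\]
which follow from Theorem \ref{tractor_thm}(1) and duality. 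Setting the $W^B{}_b Z_C{}^c$-component of $\nabla^\mcT_a \bbI^B{}_C$ to zero yields precisely \eqref{Q_parallel}; the other three components (of $W^B{}_b Y_C$, $X^B Z_C{}^c$, $X^B Y_C$) either reduce to tautologies or reproduce identities from (a)--(d). The Hessian identities for $j$ and $k$ follow by the same computation from $\nabla^\mcT\bbJ = \nabla^\mcT\bbK = 0$.

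The whole argument is bookkeeping: there is no conceptual obstacle, only the need to keep careful track of signs and of the tractor splitting, which is why the authors describe the deduction as ``straightforward''.
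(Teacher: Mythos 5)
Your proposal is correct and coincides with the paper's (largely omitted) proof: in an $(i,j,k)$-adapted scale the parallel adjoint tractors take exactly the block form you write, the block products dictated by the hypercomplex relations deliver (a)--(d) entrywise, and expanding $\nabla^{\mcT}_a\bbI^B{}_C=0$ with $\nabla_aX^A=W^A{}_a$, $\nabla_aW^A{}_b=-\Rho_{ab}X^A$, $\nabla_aY_A=\Rho_{ab}Z_A{}^b$, $\nabla_aZ_A{}^b=-\delta^b{}_aY_A$ and reading off the $W^B{}_bZ_C{}^c$-component gives precisely \eqref{Q_parallel}, the remaining components vanishing identically or giving further (harmless) consequences of parallelism. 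The only point to watch is a sign: the top-right entry of $\bbI^A{}_B\bbJ^B{}_C=\bbK^A{}_C$ yields $j^a\nabla_a i^b=k^b$ in the adapted scale (cf.\ \eqref{nabla_ij_id}), so take care when matching your computation against the first string of identities printed in (c).
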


\begin{theorem}\label{Thm_quaternionic_descent}
Suppose $(M,\mbp)$ is a connected oriented projective manifold of
dimension $4m+3\geq 11$ equipped with a parallel hypercomplex
structure $(\bbI, \bbJ,\bbK)$ on its tractor bundle $(\mcT,
\nabla^\mcT)$ such that $\vol \in \Gamma(\Wedge^{4m+4}\mcT^*)$ and $ (
\bbI, \bbJ,\bbK )$ induce the same orientation on $\mcT$. Then around
any point in $M$ there is an open set $U \subseteq M$ such that the
leaf space $\widetilde M$, of the restriction to $U$ of the
distribution $D := \operatorname{span}\{i, j, k\}$ (in the notation of
\ref{ijk-adapted-scales}(a)), inherits a canonical quaternionic
structure $\widetilde Q \subset \End(T\widetilde{M})$.

Moreover, any $(i,j,k)$-adapted scale $\nabla$ descends to a torsion-free affine connection $\widetilde{\nabla}$ on $\widetilde M$ with respect to which the subbundle $\widetilde Q$ is parallel.
\end{theorem}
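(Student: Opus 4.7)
The plan proceeds in two movements: first, to use an $(i,j,k)$-adapted scale to construct the canonical quaternionic structure $\widetilde Q$ on the leaf space $\widetilde M$; second, to descend the adapted connection $\nabla$ to a torsion-free connection $\widetilde\nabla$ on $\widetilde M$ preserving $\widetilde Q$.

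By Proposition \ref{ijk-adapted-scales}(a), $D := \operatorname{span}\{i, j, k\}$ is an integrable rank-$3$ distribution on $M$, so around any point there is an open $U \subseteq M$ with smooth local leaf space $\widetilde M := U/D$ and submersion $\pi: U \to \widetilde M$. Fix an $(i, j, k)$-adapted scale $\nabla \in \mbp\vert_U$. Using Notation \ref{not_compl_str} with $\varphi(i) = \varphi(j) = \varphi(k) = 0$, the parallel tractors $\bbI, \bbJ, \bbK$ have ``top-left'' blocks $\nabla_b i^a$, $\nabla_b j^a$, $\nabla_b k^a$; define tangent endomorphisms $I, J, K \in \Gamma(\End TU)$ by $I(\xi)^a := \xi^b \nabla_b i^a$ and analogously for $J, K$. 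Lemma \ref{identities_IJK=-Id}(a), (c) and their cyclic permutations give $I(D), J(D), K(D) \subseteq D$, so $I, J, K$ descend to endomorphisms $\bar I, \bar J, \bar K$ of $TU/D$; Lemma \ref{identities_IJK=-Id}(a), (d) and cyclic permutations reduced modulo $D$ yield the quaternionic relations $\bar I^2 = \bar J^2 = \bar K^2 = -\smash{\id}$ and $\bar I \bar J = \bar K$ (cyclically). Let $Q := \operatorname{span}\{\bar I, \bar J, \bar K\} \subseteq \End(TU/D)$: this rank-$3$ subbundle admits the admissible local frame $(\bar I, \bar J, \bar K)$. Using affine symmetry $\mathcal L_\xi \nabla = 0$ for $\xi \in \{i, j, k\}$ and the commutator relations $[i, j] = -2 k$ (and cyclic), a direct computation yields $(\mathcal L_\xi I)(X) = \nabla_X [\xi, i]$, and analogously for $J, K$; each Lie derivative is therefore a specific linear combination of $I, J, K$, so $\mathcal L_\xi Q \subseteq Q$. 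Hence $Q$ descends to a well-defined rank-$3$ subbundle $\widetilde Q \subseteq \End(T\widetilde M)$, admitting a local admissible frame via local sectioning of $\pi$, and is scale-independent: a change of adapted scale modifies $\nabla_b i^a$ by $\Upsilon_b i^a \in \Gamma(D)$ (from Proposition \ref{ijk-adapted-scales}(b) and \eqref{projective_change}), leaving $\bar I$, $\bar J$, $\bar K$ and hence $Q$ and $\widetilde Q$ unchanged.

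For the descent of $\nabla$ to $\widetilde\nabla$, the two key ingredients are: (i) $D$ is $\nabla$-totally geodesic by Lemma \ref{identities_IJK=-Id}(a), (c); and (ii) $i, j, k$ are affine symmetries of $\nabla$ by Proposition \ref{ijk-adapted-scales}(c). Together these properties enable a descent of $\nabla$ to a torsion-free affine connection $\widetilde\nabla$ on $\widetilde M$, with torsion-freeness arising from torsion-freeness of $\nabla$ together with $\pi_*[X, Y] = [\widetilde X, \widetilde Y]$. The technical heart of the argument --- and the main obstacle --- is verifying this descent: the covariant derivative $\nabla_X Y$ of appropriately lifted vector fields $X, Y$ must be shown to project to a well-defined vector field on $\widetilde M$, a verification that requires combining the identities of Lemma \ref{identities_IJK=-Id} with the affine-symmetry and commutator conditions in a controlled fashion.

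Finally, preservation of $\widetilde Q$ under $\widetilde\nabla$ follows from the second-order tractor identity \eqref{Q_parallel}, $\nabla_a \nabla_b i^c = -\Rho_{ab} i^c + \Rho_{bd} i^d\, \delta^c{}_a$, and its analogues for $j, k$: reducing these modulo $D$ and unpacking $\widetilde\nabla$ shows that $\widetilde\nabla \widetilde Q \subseteq T^*\widetilde M \otimes \widetilde Q$, completing the proof.
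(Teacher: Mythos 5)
Your first movement --- constructing $I,J,K$ from $\nabla i,\nabla j,\nabla k$ in an $(i,j,k)$-adapted scale, checking via Lemma \ref{identities_IJK=-Id}(a),(c),(d) that they preserve $D$ and satisfy the quaternionic relations on $TU/D$, using the affine symmetry of $i,j,k$ to show the flows preserve $Q$ so that it descends to $\widetilde Q$, and checking scale-independence --- is correct and essentially identical to the paper's argument.

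The second movement, however, contains a genuine gap: the existence of $\widetilde\nabla$ is asserted, not proved. You yourself flag the verification that ``$\nabla_X Y$ of appropriately lifted vector fields projects to a well-defined vector field'' as the technical heart, but you never carry it out, and the approach as hinted does not work: if you change a lift $X$ of $\tilde X$ by a section of $D$, say by $f i$, then $\nabla_Y(f i)=(Yf)\,i+f\,\nabla_Y i=(Yf)\,i+f\,I(Y)$, and $I(Y)$ does \emph{not} lie in $D$ for $Y$ transverse to $D$ (it is a nontrivial element of $TU/D$ by Lemma \ref{identities_IJK=-Id}(a)). So the naive projection of $\nabla_X Y$ to $\widetilde M$ depends on the choice of lifts; total geodesy of $D$ plus affine symmetry alone do not ``enable a descent.'' What the paper does instead is use the adapted scale to produce a distinguished complement: by Lemma \ref{identities_IJK=-Id}(a),(b) the $1$-forms $\Rho_{ab}i^b$, $\Rho_{ab}j^b$, $\Rho_{ab}k^b$ are nowhere vanishing and give a splitting $TU=H\oplus D$ with $H$ their common kernel. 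One then defines $\nabla^H_\eta\xi:=p_H(\nabla_\eta\xi)$ for $\xi\in\Gamma(H)$, observes via \eqref{Q_parallel} that $\nabla^H$ preserves $I,J,K$ (the term $-\Rho_{ab}i^c$ lies in $D$ and is killed by $p_H$, while $\Rho_{bd}i^d$ vanishes on $H$), and finally uses the affine symmetry of $i,j,k$ (Proposition \ref{ijk-adapted-scales}(c)), which preserves both $\Rho$ and the span of $i,j,k$ and hence the splitting, to descend $\nabla^H$ to a connection $\widetilde\nabla$ on $\widetilde M$ with $\widetilde\nabla\widetilde Q=0$; torsion-freeness then follows from that of $\nabla$. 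Your final paragraph invoking \eqref{Q_parallel} points at the right identity, but without the horizontal splitting and the actual construction of $\widetilde\nabla$ it has nothing to act on. To repair the proof you need to introduce this (or an equivalent) complement and perform the well-definedness check explicitly.
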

\begin{proof} Given $(\bbI,\bbJ,\bbK)$ as in the statement of the theorem,  let $U\subset M$ be a sufficiently small open set
so that $\mbp\vert_U$ admits $(i,j,k)$-adapted scales and so that the
natural projection $\pi: U\rightarrow\widetilde M$ to the leaf space
of $D$ is a submersion. Moreover, fix an $(i,j,k)$-adapted scale
$\nabla\in\mbp\vert_U$. By (a) and (c) of Lemma
\ref{identities_IJK=-Id}, the endmorphisms $\nabla i$, $\nabla j$, and
$\nabla k$ of $TU$ vanish nowhere and map $D$ to $D$. Hence, they
induce endomorphisms $I, J$, and $K$ of $TU/D$.  From (a) and (d) of
Lemma \ref{identities_IJK=-Id} we see that $I,J$, and $K$ are complex
structures on $TU/D$ such that $IJ=K$, which in particular implies
that they span a subbundle $Q\subset \textrm{End}(TU/D)$ of rank
$3$. Note that \eqref{projective_change} implies that $I, J$, and $K$
are independent of the choice of $(i,j,k)$-adapted scale in
$\mbp\vert_U$ and so is $Q\subset \textrm{End}(TU/D)$. We now verify
that $Q$ descends to a subbundle of $\textrm{End}(T\widetilde M)$ by
showing that the flows of $i, j$, and $k$ preserve $Q$. By (c) of
Proposition \ref{ijk-adapted-scales} we know that $i,j$ and $k$ are
affine symmetries of $\nabla$. Thus, (c) of Lemma
\ref{identities_IJK=-Id} (cf.\ also the proof of (a) of Proposition
\ref{ijk-adapted-scales}) implies that one has $\mcL_i\nabla i=\nabla
\mcL_i i=0$ and similarly for $j$ and $k$, and that
\begin{equation*}
\mcL_i\nabla j=\nabla \mcL_i j=-2\nabla k
\end{equation*}
and its cyclic permutations in $i, j$ and $k$ hold. Hence, $Q$ is
preserved by the Lie derivatives $\mcL_i$, $\mcL_j$, and $\mcL_k$, and
so it descends to a subbundle $\widetilde Q\subset
\textrm{End}(T\widetilde M)$. Therefore, $(\widetilde M,\widetilde Q)$
is an almost quaternionic manifold; the quaternionic structure
$\widetilde Q$ is independent of the choice of $(i,j,k)$-adapted scale
$\nabla$ because $Q$ is.

To show that $(\widetilde M,\widetilde Q)$ is moreover quaternionic it
remains to prove that it admits a torsion-free affine connection
$\widetilde\nabla$ preserving $\widetilde Q$. We show that any
$(i,j,k)$-adapted scale induces such a connection. By (a) and (b) of
Lemma \ref{identities_IJK=-Id}, an $(i,j,k)$-adapted scale $\nabla$
induces a decomposition
\begin{equation}\label{decomp_in_scale}
TU=H\oplus D,
\end{equation}
 where $H$ is the common kernel of the nowhere-vanishing $1$-forms
 $\Rho_{ab}i^b$, $\Rho_{ab}j^b$ and $\Rho_{ab}k^b$. Via
 \eqref{decomp_in_scale}, $I,J$, and $K$ may be identified with complex
 structures on $H$, and $Q$ with a subbundle of
 $\textrm{End}(H)$. Moreover, we may define a connection $\nabla^H$ on
 $H$ by $\nabla^H_\eta\xi=p_H(\nabla_\eta\xi)$ for any
 $\xi\in\Gamma(H)$ and $\eta\in\Gamma(TU)$, where $p_H: TU\rightarrow
 H$ denotes the natural projection defined by
 \eqref{decomp_in_scale}. From \eqref{Q_parallel} one sees that
 $\nabla^H$ preserves $I, J$, and $K$ and hence in particular $Q$. Now
 (c) of Proposition \ref{ijk-adapted-scales} implies that $\nabla^H$
 descends to an affine connection $\widetilde\nabla$ on $\widetilde M$,
 with $\widetilde\nabla \widetilde Q=0$; $\widetilde\nabla$ is torsion-free because $\nabla$ is.
\end{proof}

\begin{remark}\label{leaf space_dim 7}
In fact Theorem \ref{Thm_quaternionic_descent} holds also in dimension $7$ (note that the proof did not require dimension $\geq 11$).
In this case, the proof of Theorem \ref{Thm_quaternionic_descent} shows that $(\widetilde M, \widetilde Q)$ is an almost quaternionic $4$-manifold, which, in view of Remark \ref{QK_dim4}, may or may not be called quaternionic
(depending on the chosen convention), and which in any case can be also interpreted as an oriented conformal manifold $(\widetilde M, [\tilde g])$.
\end{remark}

\begin{remark}
In the setting of Theorem \ref{Thm_quaternionic_descent}, suppose that
$\nabla'$ and $\nabla$ are two $(i,j,k)$-adapted scales, which differ
by the exact $1$-form $\Upsilon$ according to
\eqref{projective_change}. Since $\Upsilon$ is exact and annihilates $D=\langle
i,j,k\rangle$, it is invariant under the Lie derivative along $i$, $j$, and $k$.
Hence, there exists a unique $\tilde\Upsilon\in\Gamma(T^*\widetilde M)$ such that
$\pi^*\tilde\Upsilon=\Upsilon$.  Analogously, as in Theorem 5.31
\cite{GNW}, one may verify that $\widetilde\nabla'$ and
$\widetilde\nabla$ are related by $\tilde\Upsilon$ (according to
\eqref{quater_proj_change}).
\end{remark}

\begin{remark}
There is also a converse to Theorem \ref{Thm_quaternionic_descent} associating to any quaternionic manifold an oriented projective manifold with a parallel hypercomplex structure on its tractor bundle,
providing an analogue to the construction described in \cite[Theorem 3.7]{ArmstrongP1} and \cite[Theorem 5.32]{GNW} that associates to any c-projective structure defined as in \cite{CEMN} a projective structure.
We will not carry out this construction here, since it is well-known in other terms; see \cite{PPS, Swann}, as well as \cite{Joyce1, Joyce2}.
\end{remark}

Suppose now that $\widetilde M$ is a manifold of dimension $4m$ equipped with an almost quaternionic structure $\widetilde Q$, which we recall is equivalent to a
$\textrm{GL}(m,\bbH)\times_{\mathbb Z_2}\Sp(1)$-structure on
$\widetilde M$. It is known that  $(\widetilde M, \widetilde Q)$ admits an equivalent description as a regular, normal parabolic geometry modelled on quaternionic projective space
$\textrm{PGL}(m+1,\bbH)/P\cong \bbH P^m$, where $P$ denotes here the stabiliser in $\textrm{PGL}(m+1,\bbH)$ of a point in $\bbH P^m$, see \cite[Section 4.1.8]{CapSlovak}.
Provided that a certain topological obstruction vanishes \cite{Salamon}, a $\textrm{GL}(m,\bbH)\times_{\mathbb Z_2}\Sp(1)$-structure admits a lift to a $\textrm{GL}(m,\bbH)\times\Sp(1)$-structure.
In that case, a choice of such a lift is equivalent to an extension of the corresponding parabolic geometry of type $(\textrm{PGL}(m+1,\bbH), P)$ to a
parabolic geometry of type $(\SL(m+1,\bbH), \widetilde P)$, where  $\widetilde P\leq \SL(m+1,\bbH)$ is the stabiliser of a quaternionic line in $\bbH^{m+1}$.
This implies that for any almost quaternionic manifold $(\widetilde M, \widetilde Q)$ with a choice of a lift to a $\textrm{GL}(m,\bbH)\times\Sp(1)$-structure (which locally always exists)
the normal Cartan connection can be viewed as a canonical linear connection $\nabla^{\widetilde \mcT}$ on the vector bundle $\widetilde \mcT\rightarrow\widetilde M$, which is
the associated bundle to the Cartan bundle with standard fibre the standard representation of $\SL(m+1,\bbH)$; see \cite{CapGoTAMS}. Similarly as for projective structures,
the vector bundle $(\widetilde\mcT, \nabla^{\widetilde \mcT})$ with connection is called the \emph{quaternionic (standard) tractor bundle} of $(\widetilde M, \widetilde Q)$.

\begin{theorem}\label{thm:tractor_descent}
Suppose $(M,\mbp)$ is a connected oriented projective manifold of
dimension $4m+3\geq 7$ equipped with a parallel hypercomplex
structure $(\bbI, \bbJ,\bbK )$ on its tractor bundle
$(\mcT, \nabla^\mcT)$ such that
$\vol \in \Gamma(\Wedge^{4m+4}\mcT^*)$ and $ ( \bbI,
\bbJ,\bbK )$ induce the same orientation on $\mcT$. As in Theorem \ref{Thm_quaternionic_descent} (and Remark \ref{leaf space_dim 7})  let $(\widetilde M, \widetilde Q)$ be any leaf space of the integrable distribution $D\subset TM$.
Write $\pi: M\rightarrow \widetilde M$ for the natural submersion, where $M$ is replaced by a sufficiently small open subset if necessary.
\begin{itemize}
\item[(a)] Declare two elements  $t_x \in\mcT_x$ and $t_{x'}\in\mcT_{x'}$ to be equivalent, if the following holds:
 $\pi(x)=\pi(x')$ and $t_{x'}$ is
the result of parallel-transporting (with respect to $\nabla^\mcT$) $t_x$ along a curve connecting $x$ and $x'$ that is everywhere tangent to $D$. Then this defines an equivalence relation $\sim$ on $\mcT$.
\item[(b)]
The quotient $\widetilde \mcT:=\mcT/\sim$ of $\mcT$ by the equivalence relation $\sim$ as defined in $(a)$ admits a natural structure of a vector bundle over $\widetilde M$ such that $\pi^*\widetilde\mcT\cong \mcT$.
Moreover, the vector space of its sections $\Gamma(\widetilde\mcT)$ may be identified with the subspace of $\Gamma(\mcT)$ consisting of those sections $t\in\Gamma(\mcT)$ that satisfy
$\nabla^\mcT_\xi t=0$ for any section $\xi$ of $D$.
\item[(c)] $\nabla^\mcT$ descends to a linear connection $\nabla^{\widetilde \mcT}$ on  $\widetilde\mcT\rightarrow \widetilde M$.
\item[(d)] The parallel hypercomplex structure $(\bbI, \bbJ, \bbK)$ and the volume form $\vol$ on $\mcT$
descend to a $\nabla^{\wtmcT}$-parallel hypercomplex structure $(\tilde\bbI, \tilde\bbJ, \tilde\bbK)$ and to a $\nabla^{\wtmcT}$-parallel (real) volume form $\tilde\vol\in\Gamma(\Wedge^{4m+4}\wtmcT^*)$ on $\wtmcT$ respectively.
Moreover, $(\bbI, \bbJ, \bbK)$ gives rise to a subbundle $\widetilde {\mathcal S}\subset\wtmcT$ of rank $4$ preserved by $\tilde\bbI, \tilde\bbJ$ and $\tilde\bbK$.
\item[(e)] The vector bundle $(\widetilde\mcT, \nabla^{\widetilde \mcT})$ with connection is the normal quaternionic (standard) tractor bundle of $(\widetilde M, \widetilde Q)$.
\end{itemize}

\end{theorem}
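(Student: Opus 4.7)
The whole argument rests on a single structural fact, already established in the course of Proposition \ref{ijk-adapted-scales}(c): the tractor curvature $R^{\mcT}$ vanishes whenever contracted with any section of $D$. Indeed, combining the identities $i^a W_{ab}{}^c{}_d = j^a W_{ab}{}^c{}_d = k^a W_{ab}{}^c{}_d = 0$ and $i^a C_{abd} = j^a C_{abd} = k^a C_{abd} = 0$ with formula \eqref{tractor_curvature} yields $R^{\mcT}(\xi, \eta) = 0$ for every $\xi \in \Gamma(D)$ and every $\eta \in \Gamma(TM)$. From this, (a) is immediate: after shrinking $M$ so that the leaves of $D$ are simply connected, Ambrose--Singer (applied to the restriction of $\nabla^{\mcT}$ along $D$) makes parallel transport along $D$-tangent curves within one leaf depend only on the endpoints. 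For (b), this independence endows the quotient $\wtmcT = \mcT / \sim$ with a natural vector bundle structure of rank $4m+4$ over $\widetilde M$ in which fibrewise linearity is inherited from the linearity of parallel transport; the isomorphism $\pi^* \wtmcT \cong \mcT$ is tautological, and the identification of $\Gamma(\wtmcT)$ with $D$-parallel sections of $\mcT$ is a direct reformulation.

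For (c), given $\tilde\eta \in \Gamma(T\widetilde M)$, pick any lift $\eta \in \Gamma(TM)$ (so $\pi_* \eta = \tilde\eta$) and for $\tilde t \in \Gamma(\wtmcT)$ corresponding to $D$-parallel $t \in \Gamma(\mcT)$, define $\nabla^{\wtmcT}_{\tilde\eta} \tilde t$ to be the descent of $\nabla^{\mcT}_{\eta} t$. Well-definedness follows from the identity
\[
\nabla^{\mcT}_{\xi} \nabla^{\mcT}_{\eta} t = \nabla^{\mcT}_{\eta} \nabla^{\mcT}_{\xi} t + \nabla^{\mcT}_{[\xi, \eta]} t + R^{\mcT}(\xi, \eta) t \qquad \textrm{for } \xi \in \Gamma(D),
\]
in which the first term vanishes by $D$-parallelism of $t$, the third by the vanishing of $R^{\mcT}$ above, and the second by $[\xi, \eta] \in \Gamma(D)$ (because $\pi_*[\xi, \eta] = [0, \tilde\eta] = 0$), so the result is itself $D$-parallel; independence of the lift $\eta$ then follows since two lifts differ by a section of $D$, which annihilates $t$. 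Tensoriality and the Leibniz rule descend from $\nabla^{\mcT}$. For (d), each of $\bbI, \bbJ, \bbK, \vol$ is $\nabla^{\mcT}$-parallel and hence in particular $D$-parallel, so descends to a $\nabla^{\wtmcT}$-parallel section $\tilde\bbI, \tilde\bbJ, \tilde\bbK, \tilde\vol$ respectively, and the algebraic identities pass to the quotient. The subbundle $\widetilde{\mathcal S}$ is obtained by descending the rank-$4$ $\bbH$-linear subbundle $\mathcal S := \mathcal X \oplus \bbI \mathcal X \oplus \bbJ \mathcal X \oplus \bbK \mathcal X \subset \mcT$, where $\mathcal X \subset \mcT$ is the line subbundle given by the image of the canonical map $\mcE(-1) \to \mcT$ determined by $X$; using the standard tractor-calculus formula for $\nabla^{\mcT}_a X^B$ together with $\nabla^{\mcT}\bbI = \nabla^{\mcT}\bbJ = \nabla^{\mcT}\bbK = 0$ (and evaluating in an $(i, j, k)$-adapted scale, where $\bbI X, \bbJ X, \bbK X$ reduce to the Killing-type lifts of $i, j, k$), one checks $\mathcal S$ is $\nabla^{\mcT}_{\xi}$-invariant for $\xi \in \Gamma(D)$, hence descends; invariance of $\widetilde{\mathcal S}$ under $\tilde\bbI, \tilde\bbJ, \tilde\bbK$ is built into its definition.

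The substantive remaining step, and the expected main obstacle, is (e). The data from (d) reduce the structure group of $\wtmcT$ to the stabiliser $\widetilde P \le \SL(m+1, \bbH)$ of a quaternionic line in $\bbH^{m+1}$, and together with $\nabla^{\wtmcT}$ this produces a Cartan geometry of type $(\SL(m+1, \bbH), \widetilde P)$ on $\widetilde M$. One first checks that its induced underlying almost quaternionic structure on $\widetilde M$ coincides with $\widetilde Q$ from Theorem \ref{Thm_quaternionic_descent}, by matching the adjoint-tractor-to-tangent projection $\Pi$ on $M$ with the corresponding projection for quaternionic tractors under the quotient, and that the Cartan geometry is regular. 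The hard point is normality: one must verify that the curvature $R^{\wtmcT}$ lies in the kernel of the Kostant codifferential $\partial^*$ associated to the quaternionic parabolic $\widetilde P$. My plan is to compute $R^{\wtmcT}$ explicitly in an $(i, j, k)$-adapted scale via the decomposition $TU = H \oplus D$ from the proof of Theorem \ref{Thm_quaternionic_descent}, expressing it in terms of the restrictions to $H$ of the projective Weyl curvature $W$ and projective Cotton tensor $C$, and then use normality of $\nabla^{\mcT}$ (i.e., coclosedness of $R^{\mcT}$ for the projective codifferential) together with the algebra embedding $\mathfrak{sl}(m+1, \bbH) \hookrightarrow \mathfrak{sl}(4m+4, \bbR)$, whose compatibility with the corresponding Kostant codifferentials transfers the normality condition. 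Once normality is verified, the uniqueness (up to isomorphism) of the normal Cartan connection of the prescribed type covering $(\widetilde M, \widetilde Q)$ identifies $(\wtmcT, \nabla^{\wtmcT})$ with the canonical normal quaternionic tractor bundle of $(\widetilde M, \widetilde Q)$, completing the proof.
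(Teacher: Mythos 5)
Your parts (a)--(d) are essentially the paper's own proof: the vanishing $R^{\mcT}(\xi,\pdot)=0$ for $\xi\in\Gamma(D)$ (from the Weyl/Cotton identities in Proposition \ref{ijk-adapted-scales}(c) and \eqref{tractor_curvature}) gives path-independence of parallel transport along the leaves (the paper invokes the homotopy formula for parallel transport rather than Ambrose--Singer, which is the same content on simply connected leaves), the vector bundle structure on $\mcT/\sim$ and the identification of $\Gamma(\wtmcT)$ with $D$-parallel sections are as in the paper, your well-definedness argument for $\nabla^{\wtmcT}$ is verbatim the paper's, and your subbundle $\mcS$ generated by $X,\bbI X,\bbJ X,\bbK X$ together with the adapted-scale computation of its $\nabla^{\mcT}_\xi$-invariance is exactly the paper's argument for (d).

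The divergence, and the one genuine gap, is the normality step in (e). The paper proves (e) via the characterisation of tractor bundles and connections in \cite{CapGoTAMS}: it checks that $(\wtmcT,\nabla^{\wtmcT})$ has structure group $\wtP$ and standard fibre $\bbH^{m+1}$ (using $(\tilde\bbI,\tilde\bbJ,\tilde\bbK)$, $\tilde\vol$ and the filtration $\tilde{\mathcal S}$), that $\nabla^{\wtmcT}$ is an $\mathfrak{sl}(m+1,\bbH)$-connection, that it is \emph{non-degenerate} (the bundle map $\tilde\Psi:T\wtM\to\Hom(\tilde{\mathcal S},\wtmcT/\tilde{\mathcal S})$ is an isomorphism onto the quaternion-linear homomorphisms, which is also what identifies the underlying structure with $\widetilde Q$, cf.\ Remark \ref{R1}), and that it is \emph{normal}, the last by running the detailed argument of \cite[Proposition 5.35]{GNW} in this setting. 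Your plan replaces that verification with the assertion that the embedding $\mathfrak{sl}(m+1,\bbH)\hookrightarrow\mathfrak{sl}(4m+4,\bbR)$ is ``compatible with the corresponding Kostant codifferentials.'' That is not a formal consequence of the embedding and is precisely where the work lies: the two codifferentials are attached to different parabolics in different algebras---for the projective one the nilradical consists of rank-one endomorphisms with image in the line subbundle $X\mcE(-1)\subset\mcT$ and the contraction runs over $TM$, while for the quaternionic one it consists of $\bbH$-linear endomorphisms with image in the rank-$4$ subbundle $\tilde{\mathcal S}$ and the contraction runs over $T\wtM\cong H$. Transferring $\partial^*R^{\mcT}=0$ to the quaternionic coclosedness of $R^{\wtmcT}$ therefore needs the specific features of the curvature (horizontality, quaternion-linearity of its values, $R_{[AB}{}^C{}_{D]}=0$ and the invariance identities from \cite{PPS} already used in Proposition \ref{ijk-adapted-scales}(c)), for instance by expressing each quaternionic contraction as a projective contraction plus three contractions twisted by $\bbI,\bbJ,\bbK$ and showing the extra terms vanish. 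As written, that step is asserted rather than proved, and your sketch also leaves the non-degeneracy condition (needed to even have a Cartan geometry of type $(\SL(m+1,\bbH),\wtP)$ inducing $\widetilde Q$) implicit; the rest of your argument is sound and matches the paper.
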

\begin{proof}
~
\begin{enumerate}
\item In the proof of (c) of Proposition \ref{ijk-adapted-scales} we
  have seen that $\xi^aW_{ab}{}^c{}_d=0$ and $\xi^aC_{abd}=0$ for any
  section $\xi$ of $D$.  By the formula for the tractor curvature
  $R^{\mcT}$ in \eqref{tractor_curvature}, this implies that
  $R^{\mcT}(\xi, \,\cdot\,)=0$ for any section $\xi$ of $D$.
  Therefore, the parallel transport along curves with the same
  endpoints that are tangent to $D$ is for all such curves the same,
  as the formula (3.1.13) in \cite{Ballmann}---describing the
  dependence of the parallel-transport of a linear connection on a
  homotopy of curves with fixed endpoints in terms of curvature,
  shows. Hence, $\sim$ is a well-defined equivalence relation on
  $\mcT$.
\item We can proceed analogously to the argument in Section 5.4.5 of
  \cite{GNW}. We equip $\widetilde\mcT$ with the initial topology with
  respect to the natural projection $\tilde p: \widetilde
  \mcT\rightarrow\widetilde M$. Then, choosing local sections of $\pi$
  shows that vector bundle charts of $\mcT$ give rise to local
  trivialisations of $\widetilde\mcT$, and hence $\tilde p: \widetilde
  \mcT\rightarrow\widetilde M$ can naturally be given the structure of
  a smooth vector bundle such that the following diagram commutes:
\begin{center}
\begin{tikzcd}
  \mcT \arrow[r, "\mathrm{proj}"] \arrow[d, "p"]
    & \wtmcT \arrow[d, "\tilde p"] \\
  M \arrow[r, "\pi"]
& \widetilde M \end{tikzcd},
\end{center}
where $\mathrm{proj}:\mcT \rightarrow \wtmcT=\mcT/\sim$ denotes the natural projection. The universal property of the pullback bundle and the fact that
$\mathrm{proj}\vert_{\mcT_x}: \mcT_x\rightarrow \wtmcT_{\pi(x)}$ is an isomorphism for all $x\in M$ immediately imply that $\pi^*{\wtmcT}\cong \mcT$.
Moreover, it follows that for any section $\tilde t\in\Gamma(\wtmcT)$ the composition $\tilde t\circ\pi$ can be naturally interpreted as a section of $\mcT$ and evidently the sections $t$ of $\mcT$ that arise in this way from sections $\wtmcT$ are precisely those that satisfy $\nabla^{\mcT}_\xi t=0$ for any section $\xi$ of $D$.
\item Suppose $\tilde\xi\in\Gamma(T\widetilde M)$ is any vector field on $\widetilde M$ and choose a lift of it to a vector field $\xi\in\Gamma(TM)$ on $M$ and let $\tilde t$ be section of $\widetilde\mcT$, which we view as a section of $\mcT$ satisfying $(\nabla^\mcT \tilde t)\vert_D=0$. Note that $\nabla_\xi^{\mcT} \tilde t$ is independent of the choice of lift $\xi$, since $(\nabla^\mcT \tilde t)\vert_D=0$. Moreover, for any section $\eta\in\Gamma(D)$ we have $[\xi, \eta]\in \Gamma(D)$, since $\xi$ is projectable as a lift of $\tilde\xi$. Thus, for any $\eta\in\Gamma(D)$ we have
$$\nabla_\eta^\mcT\nabla_\xi^\mcT\tilde t=R^{\mcT}(\eta, \xi)(\tilde t)+\nabla_\xi^\mcT\nabla_\eta^\mcT\tilde t+\nabla_{[\eta, \xi]}^\mcT\tilde t=0,$$
since $R^{\mcT}$ vanishes upon insertion of sections of $D$ as we have seen in the proof of (a) and $(\nabla^\mcT \tilde t)\vert_D=0$. Therefore, $\nabla^\mcT_\xi\tilde t$ defines a section of $\widetilde\mcT$, which is independent of the choice of lift $\xi$. Denoting this section by $\nabla^{\wtmcT}_{\tilde\xi}\tilde t$, one verifies straightforwardly that $\nabla^{\wtmcT}: \Gamma(\wtmcT)\rightarrow \Gamma(T^*\widetilde M\otimes \wtmcT)$ defines a linear connection on
$\wtmcT$.
\item The first statement follows directly from the definition of $\nabla^{\wtmcT}$ and $\nabla^\mcT$-parallelism of $(\bbI,\bbJ, \bbK)$ and $\vol$. For the second statement denote by $\mathcal S\subset \mcT$ the subbundle given by the tensor product of $\mcE(-1)$ with the subbundle of $\mcT(1)$ generated by $X$, $\bbI X$, $\bbJ X$, and $\bbK X$, where $X$ is the canonical (weighted) tractor $X \in \Gamma(\mcT(1))$ defined by
\eqref{filt_tractor}. By construction, $\mathcal S\subset \mcT$ is a subbundle of rank $4$, which is invariant under the action of $\bbI$, $\bbJ$ and $\bbK$. We claim it descends to a rank-$4$ subbundle
$\widetilde{\mathcal S}\subset\wtmcT$ invariant under $\tilde\bbI, \tilde\bbJ$ and $\tilde\bbK$. Hence, we have to verify that $\nabla^{\mcT}_\xi$ preserves $\Gamma(\mathcal S)$ for any section $\xi$ of $D$.
To do so, choose a (locally defined) $(i,j,k)$-adapted scale. With respect to such a scale, we have, by \eqref{equation:adjoint-decomposition}, that $\bbI_{B}{}^AX^A=i^aW^A{}_a$,  $\bbJ_{B}{}^AX^A=j^aW^A{}_a$ and  $\bbK_{B}{}^AX^A=k^aW^A{}_a$. Since  $\nabla_aX^A=W^A{}_a$, this implies that $\xi^a\nabla^{\mcT}_a X^A\in\Gamma(\mathcal S)$ for any $\xi\in\Gamma(D)$. Moreover, contracting $\nabla^{\mcT}_a\bbI_{B}{}^A X^B=\bbI_{B}{}^A \nabla^{\mcT}_aX^B$
with either $i^a$, $j^a$ or $k^a$ gives $-X^A$, $\bbK_{B}{}^A X^B$ and  $-\bbJ_{B}{}^A X^B$ respectively. Hence, also $\xi^a\nabla^{\mcT}_a \bbI_{B}{}^A X^B\in\Gamma(\mathcal S)$ for any $\xi\in\Gamma(D)$ and the analogous statement for $\nabla^{\mcT}_a \bbJ_{B}{}^A X^B$ and $\nabla^{\mcT}_a \bbK_{B}{}^A X^B$ follows similarly.
\item Set $\tilde\mfg:=\mathfrak{sl}(m+1,\bbH)$ and let $\widetilde P\leq \SL(m+1,\bbH)$ be the stabiliser of a quaternionic line in $\bbH^{m+1}$. By the characterisation of tractor bundles and tractor connections in \cite{CapGoTAMS}, we know that the normal (standard) quaternionic tractor bundle with connection of $(\widetilde M, \widetilde Q)$
is (up to natural equivalence) the unique vector bundle with structure group $\wtP$ and standard fibre the $(\tilde \mfg, \wtP)$-module $\bbH^{m+1}$ and
normal non-degenerate $\tilde\mfg$-connection. Hence, (e) can be proved by verifying all these properties for our pair $(\wtmcT, \nabla^{\wtmcT})$.
The proof is completely analogous to the proof of Proposition 5.35 of \cite{GNW}, so we will only sketch it here. That $(\wtmcT, \nabla^{\wtmcT})$ is a vector bundle with structure group $\wtP$ and standard fibre $\bbH^{m+1}$
follows immediately from the existence of the hypercomplex structure $(\tilde\bbI, \tilde\bbJ, \tilde\bbK)$, the volume form $\tilde{\vol}$,
and the filtration $\tilde\mcS\subset \wtmcT$, where $\tilde\mcS$ corresponds to the quaternionic line in $\bbH^{m+1}$ stabilised by $\wtP$. Since $\nabla^{\mcT}$ has holonomy contained in
$\SL(m+1,\bbH)<\SL(4m+4,\bbR)$, the construction of $\nabla^{\wtmcT}$ implies that it is a $\tilde\mfg$-connection on $\wtmcT$. Moreover, normality of $\nabla^{\mcT}$ implies
normality of
$\nabla^{\wtmcT}$, since the curvature $R^{\wtmcT}$ of $\wtmcT$ is the descent of $R^{\mcT}$. Finally, an analogue of the proof of Claim 3 of \cite[Proposition 5.35]{GNW} implies the non-degeneracy of $\nabla^{\wtmcT}$, since the vector bundle map
$$\tilde\Psi: T\wtM\rightarrow \Hom(\tilde\mcS, \wtmcT/\tilde\mcS),$$
induced by  the composition of $\nabla^{\wtmcT}$ with the natural projection $\Pi^{\wtmcT}: \wtmcT\rightarrow \wtmcT/\tilde\mcS$,
defines an isomorphism between $T\wtM$ and the vector subbundle $\Hom_{(\tilde\bbI,\tilde\bbJ, \tilde\bbK)}(\tilde\mcS, \wtmcT/\tilde\mcS)$ of $\Hom(\tilde\mcS, \wtmcT/\tilde\mcS)$ of endomorphisms commuting with
$\tilde\bbI,\tilde\bbJ$ and $\tilde\bbK$. \qedhere
\end{enumerate}
\end{proof}
\begin{remark}\label{R1}
In the proof of Theorem \ref{Thm_quaternionic_descent} we have seen
that the complex structures $I,J$, and $K$ on $TU/D$ are independent
of the choice of $(i,j,k)$-adapted scale. Alternatively, this can be
also seen more directly as follows: Let $\mathcal S\subset \mcT$ be as
in the proof of (d) of Theorem \ref{thm:tractor_descent}.  Since
$\bbI, \bbJ, \bbK$ preserve $\mcS\subset \mcT$, they induce complex
structures on $\mcT/\mcS\cong TM/D\otimes \mcE(-1)$ satisfying the
quaternionic relations and thus on also on $TM/D$. The latter coincide
with $I,J, K$.
\end{remark}

\subsection{The local leaf space of the distribution spanned by $i,j$, and $k$ in Theorem \ref{thmB}}\label{subsection:local-leaf-space}
We return now to the setting of Theorem \ref{thmB}: Let $(M, \mbp)$ be
a projective manifold of dimension at least $11$ equipped with a
parallel hyperk\"ahler structure $(h, (\bbI,\bbJ,\bbK))$ on its
tractor bundle $(\mcT, \nabla^{\mcT})$.  Let $(\widetilde M,
\widetilde Q)$ be the local leaf space of the $3$-dimensional
foliation $D$ determined by $(i, j, k)$ as in Theorem
\ref{Thm_quaternionic_descent} and write $\pi: M\rightarrow \widetilde
M$ for the natural submersive projection (where $M$ is replaced, if
necessary, by a sufficiently small open subset).

By Theorem \ref{thm:tractor_descent}, the $(\bbI,\bbJ,\bbK)$-Hermitian bundle metric $h$ of signature $(4p,4q)$ on $\mcT$ descends to a $\nabla^{\wtmcT}$-parallel bundle metric $\tilde h$ of signature $(4p,4q)$ on $\wtmcT$, which is Hermitian with respect to the $\nabla^{\wtmcT}$-parallel hypercomplex structure $(\tilde\bbI, \tilde\bbJ, \tilde\bbK)$ on $\wtmcT$. Hence, $\tilde h$ defines a holonomy
reduction of the quaternionic standard tractor bundle to $$\Sp(p,q)\leq \SL(m+1,\bbH).$$
By the theory of holonomy reductions of Cartan connections \cite{CGH}, this implies in particular that $\wtM$ admits a decomposition into initial submanifolds, where each such submanifold corresponds
to an $\Sp(p,q)$-orbit in $\SL(m+1,\bbH)/\wtP\cong \bbH P^m$ of the same dimension. Note that there are three $\Sp(p,q)$-orbits on $\bbH P^m$ corresponding to the signature of a quaternionic
line in $\bbH^{m+1}$ being positive, negative, or null. The first two orbits are open, whereas the last defines a smooth embedded (real) hypersurface in $\bbH P^m$ separating the two open orbits. Therefore,  \cite{CGH}
implies that $\wtM$ decomposes analogously as
\begin{equation}\label{decom_leaf_space}
\wtM=\wtM_+\cup \wtM_0\cup \wtM_-,
\end{equation}
where $\wtM_\pm\subset \wtM$ are open submanifolds (if non-empty) and $\wtM_0\subset \wtM$ is a smooth embedded (real) hypersurface. To describe these submanifolds more explicitly
let us write $S^2_+\wtmcT^*\subset S^2\wtmcT^*$ for the subbundle of
$(\tilde\bbI, \tilde\bbJ, \tilde\bbK)$-Hermitian forms on $\wtmcT$. Then, restricting a Hermitian form on $\wtmcT$ to the $(\tilde\bbI, \tilde\bbJ, \tilde\bbK)$-invariant subbundle
$\widetilde\mcS$ defines a surjection $$\Pi^{S^2_+\wtmcT^*}: S^2_+\wtmcT^*\rightarrow S^2_+\widetilde\mcS^*$$ from $(\tilde\bbI, \tilde\bbJ, \tilde\bbK)$-Hermitian forms on $\wtmcT$ to the line bundle $S^2_+\widetilde\mcS^*$ of  $(\tilde\bbI, \tilde\bbJ, \tilde\bbK)$-Hermitian forms on $\widetilde\mcS$.
Then, $\wtM_\pm$ and $\wtM_0$ are the subsets of $\wtM$, where $\tilde\tau=\Pi^{S^2_+\wtmcT^*}(\tilde h)\in \Gamma(S^2_+\widetilde\mcS^*)$ is positive, negative or zero respectively.
By definition of $\tilde h$, we must also have $\pi^*\tilde\tau=\tau=h(X,X)$ and hence $$\wtM_\pm=\pi(M_\pm)\, \textrm{ and }\, \wtM_0=\pi(M_0).$$ In particular, $\wtM_0$ separates $\wtM_\pm$ as $M_0$ separates $M_\pm$.

\begin{theorem} \label{thmD}
Assume the setting of Theorem \ref{thmB} and let $(\widetilde M, \widetilde Q)$ be the local leaf space of the $3$-dimensional foliation $D$ determined by $(i, j, k)$  as in Theorem \ref{Thm_quaternionic_descent}. We write
$\pi: M\rightarrow \widetilde M$ for the natural submersion (where $M$ is replaced, if necessary, by a sufficiently small open subset) and let $\wt M_\pm=\pi(M_\pm)$ and $\wt M_0=\pi(M_0)$ as in \eqref{decom_leaf_space}.

Then we have:
\begin{enumerate}
    \item The submanifolds $\wt M_{\pm}$ are open and (if non-empty) are respectively equipped with quaternionic K\"ahler structures $(\widetilde Q, \tilde g_{\pm})$ of signature $(4(p-1), 4q)$ and $(4(q-1), 4p)$.
    The metrics $g_\pm$ underlying the quaternionic K\"ahler structures have Ricci tensor $\widetilde{\Ric}_\pm = (4 m + 8)\tilde g_\pm$.
    \item The submanifold $\wt M_0$ (if non-empty) is a smooth real
      hypersurface in $(\widetilde M, \widetilde Q)$, and the maximal
      $\widetilde Q$-invariant subbundle $\widetilde H_0\subset
      T\widetilde M_0$, as defined in \eqref{wqc_structure}, defines a
      quaternionic contact structure of signature $(p-1, q-1)$ on $\widetilde M_0$, with
      quaternionic structure on $\widetilde H_0$ equal to $\tilde
      Q\vert_{\widetilde H_0}$. Moreover, the conformal structure
      associated to $(\widetilde M_0, \widetilde H_0)$, via the
      Biquard--Fefferman construction (as explained in Section
      \ref{sec_q_c}), equals $(M_0, \pm\mbc)$.
   \end{enumerate}
\end{theorem}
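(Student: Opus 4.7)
The plan is to descend the holonomy reduction established in Theorem \ref{thmB} through the submersion $\pi: M\to \wt M$ and then apply the curved-orbit theory of \cite{CGH} on the target. By Theorem \ref{thm:tractor_descent}, $(\wtmcT, \nabla^{\wtmcT})$ is the normal quaternionic tractor bundle of $(\wt M, \wt Q)$, and the descended parallel Hermitian form $\tilde h$ of signature $(4p, 4q)$ provides a holonomy reduction to $\Sp(p,q) \leq \SL(m+1, \bbH)$. The group $\Sp(p,q)$ acts on $\SL(m+1, \bbH)/\wtP \cong \bbH P^m$ with exactly three orbits---two open orbits of positive- and negative-definite quaternionic lines, and one codimension-one null orbit---and these govern the stratification $\wt M = \wt M_+ \cup \wt M_0 \cup \wt M_-$ already identified via $\pi$.

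For (a), on each open stratum the section $\tilde\tau \in \Gamma(S^2_+\widetilde{\mcS}^*)$ is nowhere vanishing and therefore trivialises this density line bundle; that trivialisation picks out a torsion-free affine connection in the quaternionic equivalence class on $\wt M_\pm$, whose parallel Hermitian metric $\tilde g_\pm$ is the desired quaternionic K\"ahler metric of the claimed signatures. I expect the cleanest identification to be that $\tilde g_\pm$ coincides with the classical quaternionic K\"ahler quotient of the $3$-Sasaki structure $(g_\pm, (i,j,k))$ along the Killing-vector distribution $D$: the decomposition $TM_\pm = H \oplus D$ from the proof of Theorem \ref{Thm_quaternionic_descent}, taken in the $(i,j,k)$-adapted Levi-Civita scale of $g_\pm$, is the orthogonal decomposition for the Riemannian submersion $\pi\vert_{M_\pm}$, and in that scale $\nabla^\pm$ descends to the Levi-Civita connection of $\tilde g_\pm$. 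Granted this, Proposition \ref{3-Sask-Einstein} combined with the standard O'Neill submersion formulas for the 3-Sasaki/quaternionic K\"ahler quotient (see \cite[Chapter 13]{BG-book}) delivers $\widetilde{\Ric}_\pm = (4m+8)\tilde g_\pm$.

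For (b), $\wt M_0$ is the zero locus of $\tilde\tau$; smoothness as a hypersurface is inherited from the smoothness of $M_0 = \pi^{-1}(\wt M_0)$ (Theorem \ref{thmB}(b)) via the submersion $\pi$. The curved-orbit theorem of \cite{CGH} applied to the null $\Sp(p,q)$-orbit on $\bbH P^m$ endows $\wt M_0$ with a regular normal parabolic geometry of type $(\Sp(p,q), \wtP')$, where $\wtP'$ is the stabiliser of a null quaternionic line in $\bbH^{p,q}$. By Theorem \ref{thm_qc} in the appropriate real form this is precisely a quaternionic contact structure of signature $(p-1, q-1)$; that its underlying distribution equals the maximal $\wt Q$-invariant subbundle $\widetilde H_0 \subset T\wt M_0$ from \eqref{wqc_structure}, with quaternionic structure $\wt Q\vert_{\widetilde H_0}$, follows by tracing the Cartan-geometric reduction through the defining data of $\wt Q$.

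The principal obstacle I anticipate is the final claim that the Biquard--Fefferman conformal structure of $(\wt M_0, \widetilde H_0)$ coincides (up to sign) with $(M_0, \mbc)$. My route is to recall from the proof of Theorem \ref{thmB}(b) that $(\mcT, h, \nabla^\mcT)\vert_{M_0}$ is identified with the normal conformal tractor bundle of $(M_0, \mbc)$, and that the parallel $\Sp(p,q)$-reduction thereon characterises $\mbc$ as Biquard--Fefferman by \cite[Theorem A]{AltQuaternionic}. Thus both the Biquard--Fefferman output of $(\wt M_0, \widetilde H_0)$ and $(M_0, \mbc)$ are encoded by the same $\Sp(p,q)$-holonomy reduction of the same Cartan-geometric object; matching the twistor $S^2$-bundle over $\wt M_0$ composed with its Fefferman circle bundle to the $3$-dimensional fibres of $\pi\vert_{M_0}$ is most transparently carried out in an $(i,j,k)$-adapted scale, where each fibre of the twistor $S^2$-bundle is realised as the $\Sp(1)$-orbit of one of $\bbI\vert_{M_0}, \bbJ\vert_{M_0}, \bbK\vert_{M_0}$. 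The sign ambiguity $\pm\mbc$ will reflect the choice of orientation on the Fefferman circle fibre relative to the boundary defining function $\tilde\tau$.
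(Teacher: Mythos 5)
Your treatment of part (a) is essentially the paper's argument: since $i,j,k$ are Killing fields for $g_\pm$, the metric descends along the Riemannian submersion $\pi\vert_{M_\pm}$, the decomposition $TM_\pm = H\oplus D$ in the (Levi-Civita, hence $(i,j,k)$-adapted) scale is the orthogonal one, the connection of Theorem \ref{Thm_quaternionic_descent} is then the Levi-Civita connection of $\tilde g_\pm$, and the Ricci constant $(4m+8)$ comes from the standard $3$-Sasaki--to--quaternionic-K\"ahler quotient computation \cite[Theorem 13.3.13]{BG-book}. That part is fine.

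Part (b) is where the proposal has genuine gaps. First, invoking \cite{CGH} on the null orbit only yields a Cartan geometry of type $(\Sp(p,q),\wtP')$ on $\wt M_0$ whose curvature is the restriction of the original curvature; to conclude via Theorem \ref{thm_qc} that this \emph{is} a quaternionic contact structure with underlying distribution $\wt H_0$ you need the induced geometry to be regular and normal with respect to the new Kostant codifferential, and this is not automatic for induced geometries on curved orbits and is not verified in your sketch (nor is the claimed identification of the underlying filtration with the maximal $\wt Q$-invariant subbundle, which you only assert ``by tracing the reduction''). Second, and more importantly, the final statement --- that the Biquard--Fefferman conformal structure of $(\wt M_0,\wt H_0)$ is $(M_0,\pm\mbc)$ --- is the actual content of the theorem, and your proposal only describes a plan (``matching the twistor $S^2$-bundle composed with the Fefferman circle bundle to the fibres of $\pi\vert_{M_0}$'') without an argument; note also that the fibres of the combined construction form an $S^3$-bundle, not literally an $S^2$-bundle followed by a circle bundle over $\wt M_0$ in a way that can be matched to the $3$-dimensional leaves of $D_0$ without further work. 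The paper closes exactly this gap differently: it uses that $(\mcT\vert_{M_0},h_0,\nabla^\mcT\vert_{M_0})$ is the conformal tractor bundle of $(M_0,\mbc)$, shows $i,j,k$ restrict to mutually orthogonal null conformal Killing fields, identifies $H_0\subset TM_0/D_0$ (up to a density twist) with $\mcS_0^\perp/\mcS_0$ carrying the induced complex structures $I_0,J_0,K_0$, and then cites \cite[Section 4.2]{AltQuaternionic} and \cite{AltTwistor} to see that $(H_0,Q_0)$ descends along $\pi$ to precisely the quaternionic contact structure underlying the Biquard--Fefferman fibration of $(M_0,\mbc)$; maximality of $\wt H_0$ plus a dimension count then gives $\wt H_0'=\wt H_0$. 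Without either Alt's descent results or a substitute argument, your proof of (b) does not go through.
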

\begin{proof}
~
\begin{enumerate}
    \item It is well-known that the local leaf space of the canonical rank-$3$ distribution of a $3$-Sasaki structure admits a natural quaternionic K\"ahler structure; see \cite[Theorem 13.3.13]{BG-book}. Hence, (a) follows directly from Theorem \ref{thmB}. For completeness and convenience of the reader, let us nevertheless give a proof of (a). Let $(g_\pm, (i,j,k))$ be the $3$-Sasaki structure on $M_\pm$ of Theorem \ref{thmB}, and denote by $\nabla^\pm\in \mbp\vert_{M_\pm}$ the Levi-Civita connection of $g_\pm$. Since $i,j$, and $k$ are Killing fields of $g_\pm$ (again we suppress the restriction notation $\,\cdot\,\vert_{M_{\pm}}$), the metric  $g_\pm$ descends to a metric $\tilde g_\pm$ on $\widetilde M_\pm$ characterised by
        \begin{equation*}
            \tilde g_{\pm}(T\pi \cdot \xi, T\pi \cdot \eta)=g(\xi, \eta) \quad\quad \forall\, \xi,\eta\in D^\perp,
        \end{equation*}
        where $D^\perp$ denotes the orthogonal complement of $D$ with respect to $g_\pm$. The fact that $i,j$, and $k$ are Killing fields of $g_\pm$ also implies that $\nabla^\pm i$, $\nabla^\pm j$ and $\nabla^\pm k$ are $g_\pm$-skew. 
        It follows from the construction of $\widetilde Q$ and $\tilde g_\pm$ that $\tilde g_\pm$ is $\widetilde Q$-Hermitian. Moreover, by Theorem \ref{Thm_quaternionic_descent}, $\nabla^\pm$ descends to a torsion-free affine connection $\widetilde\nabla^\pm$ on $\widetilde M_\pm$ with $\widetilde\nabla^\pm \widetilde Q=0$, which has to be the Levi-Civita connection of $\widetilde g_\pm$ by the definition of $\tilde g_\pm$ and the fact that $\nabla^\pm g_\pm=0$. Thus, $(\widetilde M_\pm, \widetilde Q, \tilde g_\pm)$ are quaternionic K\"ahler structures of signature $(4(p-1), 4q)$ and $(4(q-1), 4p)$, respectively. By Theorem \ref{thmB}, we have $\Ric_\pm=(4m+2) g_\pm$, from which one deduces by computation that $\widetilde{\Ric}_\pm=(4m+8) \tilde g_\pm$; see \cite[Theorem 13.3.13]{BG-book}.

    \item We have already seen that $\widetilde M_0$ is a smooth (real) hypersurface separating $\widetilde M_+$ and $\widetilde M_-$ in the quaternionic manifold $(\widetilde M, \widetilde Q)$.
        We will show that the maximal quaternionic subbundle $\widetilde
              H_0\subset T \widetilde M_0$ defines a quaternionic contact structure on $\widetilde M_0$ by
              identifying it with the quaternionic contact structure over which
              the Biquard--Fefferman conformal structure $(M_0,
              \mbc)$ fibres, by dint of its definition. To see
              this we proceed along the same lines as the proof of Theorem D in
              \cite{GNW}. As we have seen there, $(\mcT\vert_{M_0},
              h_0)$, where $h_0 := h\vert_{M_0}$, is the conformal tractor bundle of $(M_0, \bf{c})$
              which is filtered as $$\mcT\vert_{M_0} \supset \mcT^0
              \supset \mcT^1,$$ where $\mcT^1$ is the restriction of
              the line bundle $X\mcE(-1)\subset\mcT$ to $M_0$ and
              $\mcT^0:=(\mcT^1)^\perp$ its orthogonal subspace with
              respect to $h_0$. The containment $\mcT^1 \supset \mcT^0$ follows from the fact that the canonical weighted tractor $X \in \Gamma(\mcT(1))$ is null on $M_0$. In particular,
              in the notation of \cite[Section 2]{BEG} for line bundles $\mcE[w]$ on
              conformal manifolds one has $\mcT^1\cong \mcE[-1]$ and
              $\mcT^0/\mcT^1\cong TM_0[-1]$, and the conformal
              structure $\mbc$ on $M_0$ is given by the
              non-degenerate bilinear form $\mbg\in\Gamma(S^2
              T^*M_0[2])$ which $h_0$ induces on
              $\mcT^0/\mcT^1$. The proof of Theorem D of
              \cite{GNW} shows that the respective projecting parts $i,j$, and $k$ of
              $\bbI, \bbJ$, and $\bbK$ restrict on $M_0$ to conformal Killing
              fields that are null.
              Moreover, the fact that $h$ is Hermitian with respect to the hypercomplex structure $(\bbI, \bbJ,\bbK)$, implies also that
              $i,j,k$ are orthogonal to each other: for $i$ and $j$ we have
              $$\mbg(i,j)=h(\bbI X,\bbJ X)=-h(X,\bbK X)=h(\bbK X,
              X)=h(X,\bbK X),$$ which thus has to vanish, and
              similarly for $(k,i)$ and $(j,k)$.  Therefore,
              $\mbg(i,\,\cdot\,)$, $\mbg(j,\,\cdot\,)$, and
              $\mbg(k,\,\cdot\,)$ descend to nowhere-vanishing
              linearly independent (weighted) $1$-forms on $TM_0/D_0$,
              where $D_0 := D\vert_{M_0}$ is the distribution spanned
              by $i,j$, and $k$ on $M_0$. Let us denote the common
              kernel of these $1$-forms by $H_0\subset TM_0/D_0$. Note
              that, in the notation of Remark \ref{R1}, $H_0$ can be
              identified, up to a twist with a line bundle, with the
              bundle $\mcS_0^\perp/\mcS_0\subset
              TM_0/D_0\otimes\mcE[-1]$, where $\mcS_0 :=
              \mcS\vert_{M_0}$ is the restriction of $\mcS\subset\mcT$
              (as defined in the proof of (d) of Proposition
              \ref{thm:tractor_descent}) to $M_0$.  Since $(h_0,
              (\bbI_0,\bbJ_0,\bbK_0))$ is a hyperk\"ahler structure on
              $\mcT\vert_{M_0}$ (here $\bbI_0 := \bbI \vert_{M_0}$ and
              analogously for $\bbJ_0$ and $\bbK_0$), the complex
              structures $\bbI_0, \bbJ_0, \bbK_0$ preserve $\mcS_0$
              and its orthogonal complement. Hence, they
              (respectively) induce complex structures $I_0, J_0, K_0$
              on $H_0$, and by construction they are the restrictions
              of the complex structures $I,J$, and $K$ induced on
              $TM/D$; see Remark \ref{R1}.  By \cite[Section 4.2]{AltQuaternionic} and \cite{AltTwistor},
              $H_0$ and the quaternionic structure
              $Q_0=\operatorname{span}\{I_0,J_0,K_0\}$ descend via
              $\pi$ (the projection to the local leaf space of $D_0$)
              to the quaternionic contact structure $(\widetilde H_0',
              \widetilde Q_0')$ over which the Biquard--Fefferman
              conformal structure $(M_0, \mbc)$ fibres. By the
              definition of $\widetilde Q$, the subbundle $\widetilde
              H_0'\subset T\widetilde M_0$ must be $\widetilde
              Q$-invariant and $\widetilde Q_0'$ must be the
              restriction of $\widetilde Q$ to $\widetilde
              H_0'$. Since $\widetilde H_0$ is by definition (see
              statement (b)) the maximal $\widetilde Q$-invariant
              subbundle of $T\widetilde M_0$, we conclude
              $\widetilde H_0'\subset \widetilde
              H_0$ and hence for dimensional reasons $\widetilde
              H_0'=\widetilde H_0$. \qedhere
\end{enumerate}
\end{proof}
We finish with a few remarks.

\begin{remark}\label{ThmD_dim7}
Let us briefly comment on the analogue of Theorem \ref{thmD} in the
$7$-dimensional case. Suppose $(M, \mbp)$ is a projective manifold of
dimension $7$ equipped with a parallel hyperk\"ahler structure $(h,
(\bbI,\bbJ,\bbK))$ on its tractor bundle $(\mcT, \nabla^{\mcT})$. Then
$(\widetilde M, \widetilde Q)$ is $4$-dimensional, which may be also
viewed as an oriented conformal $4$-manifold, as explained in Remark
\ref{leaf space_dim 7}. By Theorem 13.3.13 of \cite{BG-book}, it follows that on
$\widetilde M_\pm$ one gets quaternionic K\"ahler $4$-manifolds as
defined in Remark \ref{QK_dim4}.  If $\widetilde M_0\neq \emptyset$,
which implies $p=q=1$, then $\widetilde M_0$ is a smooth hypersurface
in $\widetilde M$, separating $\widetilde M_+$ and $\widetilde M_-$,
with a conformal structure of definite signature.  The latter is
induced from the conformal structure on $M_0$ with holonomy reduction
to $\Sp(1,1)\cong \textrm{Spin}(4,1)$ (cf. Remark
\ref{ThmB_dim7}). Recall in this context that LeBrun has shown in
\cite{LeBrun} that any real-analytic $3$-dimensional conformal
manifold is naturally the conformal infinity of a germ-unique
real-analytic $4$-dimensional Riemannian manifold that is self-dual
and Einstein.
\end{remark}

\begin{remark}
Note that in part (a) of Theorem \ref{thmD}, the underlying quaternionic K\"ahler structures  $(\widetilde Q, \tilde g_{\pm})$ on $\wt M_\pm$ are never hyperk\"ahler, since hyperk\"ahler structures are Ricci-flat.
\end{remark}

\begin{remark}
In principle one can adapt the methods of \cite[\S6]{GNW} to use
quaternionic contact structures to construct examples of projective
structures with holonomy reductions to $\Sp(p, q)$, including ones
with a non-empty hypersurface curved orbit. In that reference, for a
CR structure to produce (along with some other choices) a smooth
projective structure with special unitary holonomy reduction, it was
necessary that the Fefferman--Graham ambient obstruction tensor
$\mathcal{O}$ of the induced conformal structure vanishes. For
Biquard--Fefferman conformal structures, however, $\mathcal{O}$
vanishes automatically \cite[Corollary 1.1(4)]{LeistnerLischewski}.
\end{remark}

\begin{remark}
One could weaken the hypothesis of Theorem \ref{thmB}  and instead ask for the curved orbit decomposition
determined by a parallel tractor quaternionic K\"ahler structure (that
is, a holonomy reduction of the normal projective tractor connection
to $\Sp(p, q) \times_{\bbZ_2} \Sp(1)$) rather than a parallel hyperk\"ahler
structure (a reduction to $\Sp(p, q)$). But \cite[Theorem
  4.5.1]{ArmstrongThesis} or \cite[Proposition 4.15]{ArmstrongP1}
implies if a projective structure $(M, \mbp)$ admits a parallel
tractor quaternionic K\"ahler structure, then locally around each
point (or globally, if $M$ is simply connected) it in fact admits a
parallel hyperk\"ahler structure, so (locally) this weaker reduction
has the same geometric consequences. This result mirrors the fact explained
in Section 5.5 of \cite{GNW} that a parallel tractor K\"ahler
structure (a reduction to $\U(p', q')$) (locally) entails a parallel
tractor Calabi--Yau structure (a reduction to $\SU(p', q')$).
\end{remark}


\begin{thebibliography}{XX}

\bibitem{AF} I.\ Agricola, T.\ Friedrich, \textit{$3$-Sasakian manifolds in dimension seven, their spinors and $\operatorname{G}_2$-structures}, J.\ Geom.\ Phys., {\bf 60} (2010), no. 2, 326--332. arXiv:0812.1651

\bibitem{AlekseevskyMarchiafava} D.V.\ Alekseevsky, S.\ Marchiafava, \textit{Quaternionic structures on a manifold and subordinated structures.} Ann.\ Mat.\ Pura Appl.\ (4), {\bf 171} (1996), 205--273. doi:10.1007/bf01759388

\bibitem{AltQuaternionic} J.\ Alt, {\em On quaternionic contact Fefferman spaces.}, Differential Geom.\ Appl.\ {\bf 28} (2010), 376--394. arXiv:1003.1849

\bibitem{AltTwistor} J.\ Alt, {\em On the twistor space of a quaternionic contact manifold}, J.\ Geom.\ Phys.\ {\bf 61} (2011), 1783--1788. arXiv:1104.2705

\bibitem{ArmstrongThesis} S.\ Armstrong, {\em Holonomy of Cartan Connections}, University of Oxford Thesis (2006).

\bibitem{ArmstrongP1} S.\ Armstrong, {\em Projective holonomy I: Principles and properties}, Ann.\ Global Anal.\ Geom.\ {\bf 33} (2008), 47--69. arXiv:math/0602620

\bibitem{ArmstrongP2} S.\ Armstrong, {\em Projective holonomy II: Cones and Complete Classifications}, Ann.\ Global Anal.\ Geom.\ {\bf 33} (2008), 137--160. arXiv:math/0602621

\bibitem{BEG} T.N.\ Bailey, M.G.\ Eastwood, and A.R.\ Gover, {\em Thomas's structure bundle for conformal, projective and related structures}, Rocky Mountain J.\ Math.\ {\bf 24} (1994), 1191--1217.

\bibitem{Ballmann} W. Ballmann, {\em Vector bundles and Connections}, Lecture notes, available at
http://people.mpim-bonn.mpg.de/hwbllmnn/archiv/conncurv1999.pdf

\bibitem{BiquardMetriques} O.\ Biquard, {\em M\'etriques d'Einstein asymptotiquement sym\'etriques}, Ast\'erisque {\bf 265} (2000), 115 pp.

\bibitem{BiquardQuaternionic} O.\ Biquard, {\em Quaternionic contact structures}, Proceedings of the Second Meeting on Quaternionic Stsructures in Mathematics and Physics, Roma (1999) (S.\ Marchiafava, P.\ Piccinni, M.\ Pontecorvo, eds.), World Scientific, 23--30 (2001).

\bibitem{BDS} D.\ Burns, K.\ Diederich and S.\ Shnider, \textit{Distinguished curves in pseudoconvex boundaries}, Duke Math.\ J.\ {44} (1977) 407--431.

\bibitem{BG-3Sas} C.P.\ Boyer, K.\ Galicki, {\em $3$-Sasakian manifolds}, Surveys Diff.\ Geom.\ {\bf 6} (1999), 123--184. arXiv:hep-th/9810250

\bibitem{BG-book} C.P.\ Boyer, K.\ Galicki, {\em Sasakian geometry}. Oxford Mathematical Monographs. Oxford University Press, Oxford, 2008. xii+613\ pp.

\bibitem{BGM} C.P.\ Boyer, K.\ Galicki, B.M.\ Mann, {\em The geometry and topology of $3$-Sasakian manifolds}, J. Reine Angew. Math. {\bf 455} (1994), 183--220.

\bibitem{CEMN} D.M.J.\ Calderbank, M.G.\ Eastwood, V.S.\ Matveev and K.\ Neusser, {\em C-projective geometry}, Memoirs of AMS {\bf 267}, no. 1299 (2020), 137 pp.

\bibitem{CapGoTAMS} A.\ \v Cap and A.R.\ Gover, {\em Tractor calculi for parabolic geometries}, Trans.\ Amer.\ Math.\ Soc.\ {354} (2002), 1511--1548.

\bibitem{CGFeffchar}  A.\ \v Cap, and A.R.\ Gover, {\em A holonomy characterisation of Fefferman spaces}, Ann.\ Global Anal.\ Geom. {\bf 38} (2010),  399--412. 	arXiv:math/0611939

\bibitem{CGcompact} A.\ \v Cap and A.R.\ Gover, \emph{Projective
  compactifications and Einstein metrics},
  J.\ Reine.\ Angew.\ Math. {\bf 717} (2016), 47--75. arXiv:1304.1869

\bibitem{CGHjlms} A.\ \v Cap, A.R.\ Gover, and M.\ Hammerl, {\em
  Projective BGG equations, algebraic sets, and compactifications of
  Einstein geometries}, J.\ London Math.\ Soc.\ (2) {\bf 86} (2012),
  433--454. doi:10.1112/jlms/jds002. arXiv:1005.2246

\bibitem{CGH} A.\ \v Cap, A.R.\ Gover, and M.\ Hammerl, {\em Holonomy reductions of Cartan geometries and curved orbit decompositions}, Duke Math.\ J.\ {\bf 163} (2014), 1035--1070. arXiv:1103.4497

\bibitem{CapSlovak} A.\ \v{C}ap, J. Slov\'ak, \emph{Parabolic Geometries I: Background and General Theory}, Math.\ Surveys Monogr.\ {154}, Amer.\ Math.\ Soc., Providence, 2009.

\bibitem{D1} D.\,Duchemin, \emph{Quaternionic contact structures in dimension 7}, Annales de l'institut Fourier  \textbf{56}, no. 4 (2006), 851--885. arXiv:math/0311436

\bibitem{D2} D.\,Duchemin, \emph{Quaternionic-contact hypersurfaces}, arXiv:math/0604147.

\bibitem{Fefferman} C.\ Fefferman, {\em Monge--Amp\`ere Equations, the Bergman Kernel, and Geometry of Pseudoconvex Domains}, Ann.\ of Math.\ (2) {103} (1976), 395--416.

\bibitem{GNW} A.R.\ Gover, K.\ Neusser, and T.\ Willse, \emph{Projective Geometry of Sasaki--Einstein structures and their compactification}, Dissertationes Math. {\bf 546} (2019), 64 pp. arXiv:1803.09531


\bibitem{GS} K.\ Galicki, S.\ Salamon, {\em Betti numbers of $3$-Sasakian manifolds}, Geom.\ Dedicata {\bf 63} (1996), 45--68.

\bibitem{GWZ} K.\ Grove, B.\ Wilking, W.\ Ziller, {\em Positively curved cohomogeneity one manifolds and $3$-Sasakian geometry} J.\ Differential Geom. {\bf 78} (2008), 33--111. arXiv:math/0511464


\bibitem{Hitchin} N.\ Hitchin, \textit{Hyperk\"ahler manifolds}, Ast\'{e}risque {\bf 206} (1992), S\'eminaire Bourbaki {\bf 748}, 137--166.

\bibitem{Joyce1} D.\ Joyce, \emph{The hypercomplex quotient and the quaternionic quotient,} Math. Ann. \textbf{290} (1991), 323-340.
\bibitem{Joyce2} D.\ Joyce, \emph{Compact quaternionic and hypercomplex manifolds}, J. Diff. Geom. \textbf{35} (1992), 743-761.

\bibitem{LeBrun} C.R. LeBrun, {\em $\mathcal H$-spaces with a cosmological constant}, Proc.\ R.\ Soc.\ London A \textbf{380} (1982), 171-185.

\bibitem{LeistnerLischewski} T.\ Leistner, A.\ Lischewski. \textit{The ambient obstruction tensor and conformal holonomy}, Pacific J.\ Math.\ {\bf 290} (2017) 403--436. arXiv:1511.07214

\bibitem{Leit} F.\ Leitner, {\em On transversally symmetric pseudo-Einstein and Fefferman--Einstein spaces}, Math. Z. {\bf 256} (2007),  443--459.  	arXiv:math/0502287
  %

\bibitem{PPS} H.\ Pedersen, Y.S.\ Poon, A.F.\ Swann, \textit{Hypercomplex structures associated to quaternionic manifolds}, Differential Geom.\ Appl.\ {\bf 9} (1998), 273--292.

\bibitem{Salamon} S.M.\ Salamon, \emph{Differential geometry of quaternionic manifolds}, Ann. Scient. \'Ecole Norm. Sup. \textbf{19} (1986) 31-55.

\bibitem{Swann} A.F.\ Swann, \emph{Hyper-K\"ahler and quaternionic K\"ahler geometry}, Math. Ann. \textbf{289} (1991), 421--450.

\end{thebibliography}
\end{document}